\newtheorem{theorem}{Theorem}
\newtheorem{lemma}{Lemma}
\newtheorem{corollary}{Corollary}
\newtheorem{proposition}{Proposition}
\newtheorem{algorithm}{Algorithm}
\theoremstyle{definition}
\newtheorem{definition}{Definition}
\theoremstyle{remark}
\newtheorem{remark}{Remark}
\newcommand{\divv}{\operatorname{div}}
\newcommand{\volm}{\operatorname{Vol}_{M}}
\title[Large data limit of the MBO scheme]{Large data limit of the MBO scheme for data clustering: $\Gamma$-convergence of the thresholding energies}
\author{Tim Laux}
\address[Tim Laux]{Institut f\"ur angewandte Mathematik, Universit\"at Bonn, Endenicher Allee 60, 53115 Bonn, Germany}
\email{tim.laux@hcm.uni-bonn.de}
\author{Jona Lelmi}
\address[Jona Lelmi]{Institut f\"ur angewandte Mathematik, Universit\"at Bonn, Endenicher Allee 60, 53115 Bonn, Germany}
\email{lelmi@hcm.uni-bonn.de}
\begin{document}

\maketitle

\begin{abstract}
In this work we present the first rigorous analysis of the MBO scheme for data clustering in the large data limit. Each iteration of the scheme corresponds to one step of implicit gradient descent for the thresholding energy on the similarity graph of some dataset. For a subset of the nodes of the graph, the thresholding energy at time $h$ measures the amount of heat transferred from the subset to its complement at time $h$, rescaled by a factor $\sqrt{h}$. It is then natural to think that outcomes of the MBO scheme are (local) minimizers of this energy. We prove that the algorithm is consistent, in the sense that these (local) minimizers converge to (local) minimizers of a suitably weighted optimal partition problem.

\medskip

\noindent \textbf{Keywords:} Graph MBO, thresholding scheme, clustering, semi-supervised learning, continuum limits.

  \medskip

\noindent \textbf{Mathematical Subject Classification (MSC2020)}:
	35A15 (primary), 
	53Z50, 
	49Q20, 
	35R01, 
	35R02. 
\end{abstract}

\section{Introduction}\label{sec:intro}

The MBO scheme was introduced by Merriman, Bence and Osher in~\cite{Merriman1992},~\cite{Merriman1994} as an efficient algorithm to approximate mean curvature flow. It was subsequently adapted to perform data clustering by Bertozzi et al.~\cite{Merkurjev2013},~\cite{Merkurjev2014},~\cite{Gennip2014}. In the current work we present the first rigorous analysis of the large data limit of the algorithm. In the simplest setting, data clustering aims at partitioning some data points $\{X_1, . . . , X_n\} \subset \mathbf{R}^d$ into two meaningful clusters. One constructs the \textit{similarity graph} $G=(V, E)$ where the vertex set $V$ is given by the data points, and the edge between two vertices is weighted by a function of their Euclidean distance. One seeks a function $\chi: V \to \{0,1\}$ encoding the desired partition. Denoting by $\Delta$ a suitable graph Laplacian on $G$, the MBO scheme works as follows. Let $h > 0$ be a given time-step size. Let $\chi: V \to \{0,1\}$ be an initial guess for the clustering. To obtain a new clustering one performs $N$ times the following steps.
\begin{enumerate}
\item \textbf{Diffusion}. Define
\begin{equation}
u(t) = e^{-t\Delta}\chi.\nonumber
\end{equation}
\item \textbf{Thresholding}. Update the cluster by setting
\begin{equation}
\chi = 1  \Leftrightarrow u(h) \ge \frac{1}{2}.\nonumber
\end{equation}
\end{enumerate}
The output clustering is given by $\chi$. From the point of view of applications, it is important to be able to include constraints on some of the nodes of the graph. More precisely, frequently some of the labels of the data points are already known - in the sense that it is known to which of the clusters these points belong. The aim then is to classify all the other data points using both the known labels and the geometry of the dataset. The MBO scheme can be suitably modified to take into account given labels: It suffices to choose an appropriate forcing function $f:V \to \mathbf{R}$ and change the thresholding value in the second step of the algorithm to $\frac{1}{2} - \sqrt{h}f$. Some of the possible choices for $f$ are discussed in Section~\ref{sec:semisupervised}. A crucial observation is that each iteration of the MBO scheme decreases the energy

\begin{equation*}
F_h(u) = \frac{1}{\sqrt{h}} \langle 1-u, e^{-h\Delta}u\rangle - \langle f, u \rangle,
\end{equation*}
the first term of which we call the thresholding energy. This was observed in this setting by Bertozzi et al.~\cite{Gennip2014}, but was originally pointed out by Esedo\={g}lu and Otto in their seminal work~\cite{Esedoglu2015}, where they give a minimizing movements interpretation for the continuum MBO scheme. Each step of the MBO scheme corresponds thus to one step of implicit gradient descent for the energy $F_h$ and if $N$ is large enough we may think of the outputs of the MBO scheme as almost  (local) minimizers of the energy $F_h$. It is then natural to raise the following question: Can we think of these local minimizers as approximations of a minimization problem of some energy defined in the continuum? The right tool to tackle this problem is $\Gamma$-convergence, a notion of convergence that ensures convergence of (local) minimizers. This was introduced by De Giorgi and Franzoni in~\cite{DeGiorgi1975}; see~\cite{DalMaso1993} and~\cite{Braides2002} for background on the topic. Whilst the case $f \neq 0$ is the one which is used for applications, the additional term is not the most relevant one in the energy $F_h$: Indeed, once one is able to identify the correct $\Gamma$-limit for the first term --- the \textit{thresholding energy} --- the convergence of the full energy $F_h$ is just a corollary obtained by observing that the second term is just a continuous perturbation, which is stable with respect to $\Gamma$-convergence; see Section~\ref{sec:semisupervised} for the details. Thus the main point is to study the asymptotic behavior of the thresholding energy.

More precisely, as usual in data science, we work under the \textit{manifold assumption}, i.e., we assume that the data points lay on a $k$-dimensional Riemannian submanifold $M \subset \mathbf{R}^d$, and that these are independently distributed according to a probability measure $\nu = \rho\volm$ which is absolutely continuous with respect to the volume measure with a smooth and positive density. We want to address the following question: What is the limiting minimization problem when the number of data points goes to infinity and the step size $h$ goes to zero? It turns out that the first limit --- letting the number of data points go to infinity --- is the continuum thresholding energy, which is defined as follows:

\begin{equation*}
E_h(u) = \frac{1}{\sqrt{h}} \int_M (1-u) e^{-h\Delta_{\rho^2}}u \rho^2d\volm,\quad u: M \to [0,1]\ \text{measurable}.
\end{equation*}
Here $e^{-h\Delta_{\rho^2}}$ is the heat operator on the manifold associated to the weighted Laplacian with weight $\mu := \rho^2 \volm$; see Section~\ref{sec:prelim} for the details. Of course, it is important to specify the topology under which this convergence takes place: at a first sight, this may seem subtle because there is no way to define the restriction of a measurable function on the manifold to a discrete set of points (like the vertex set of a finite graph). To overcome this difficulty, in their seminal paper~\cite{GarciaTrillos2016}, Garc\'{i}a Trillos and Slep\v{c}ev introduced the notion of $TL^p$-convergence --- a notion of convergence based on optimal transport allowing one to compare functions which are in $L^p$-spaces with respect to two different probability measures. This notion of convergence has become a standard tool in studying large data limits, a non exhaustive list of relevant works being~\cite{GarciaTrillos2018, Slepcev2019, Dunlop2020}. In our context, however, we can even show that this $\Gamma$-convergence takes place in a \textit{weak $TL^2$-topology}, see Definition~\ref{def:weaktlp}. This is the natural convergence to work with, because the weak compactness properties of the thresholding energies only yield $\Gamma$-compactness in the weak $TL^2$-topology.

The second limit --- letting $h \downarrow 0$ --- is understood in the sense of $\Gamma$-convergence with respect to the strong $L^1(M)$-topology. This problem is well studied when $M = \mathbf{R}^k$ is the flat Euclidean space and $\rho = 1$ is the constant unit density. In particular, in~\cite{Miranda2007a} Miranda, Pallara, Paronetto and Preunkert prove the \textit{consistency} of the thresholding energy in the flat space with unit density, i.e., they show that if $u$ is the characteristic function of a set of finite perimiter, $E_h(u)$ approximates the perimeter as $h \downarrow 0$. The full $\Gamma$-convergence in the flat case with constant unit density is a consequence of the work of Alberti and Bellettini~\cite{Alberti1998} and is based on the blow-up method of Fonseca and M\"{u}ller~\cite{Fonseca1993}. The $\Gamma$-convergence in the multiphase setting --- still in the flat case with constant unit density --- is proved in~\cite{Esedoglu2015} based on a monotonicity formula. On a curved space, the only analogous result~\cite{Miranda2007} concerns a different --- and simpler --- energy $\tilde{E}_h(u) = \int_M |\nabla e^{-h\Delta} u |\, d\volm$. In particular neither the $\Gamma$-convergence nor the consistency for the thresholding energy was known. We prove both these results in the vectorial multiclass setting. 

Let us briefly comment on the key ingredients for proving the main results in this paper, namely the discrete-to-nonlocal convergence of Theorem~\ref{thm:discretenonloc} and the nonlocal-to-local convergence of Theorem~\ref{thm:nonlocloc}. The main point for proving that the thresholding energies on the graph converge to the nonlocal thresholding energy on the manifold as the number of data points goes to infinity is the convergence and regularization properties of the heat operators: we prove that whenever $u_n$ are functions defined on the first $n$ data points which converge to $u \in L^2(M)$ weakly with respect to the $TL^2(M)$-convergence, then for each fixed $h > 0$ the corresponding heat operators $e^{-h\Delta}u_n$ converge (up to constants in the weighted Laplacian) to $e^{-h\Delta_{\rho^2}}u$ \textit{strongly} in $TL^2(M)$. The convergence of the thresholding energies is then just a corollary that uses the fact that products of weakly convergent functions and strongly convergent ones converge to the product of their limits.  
With a similar argument we also give a positive answer to a question raised by Bertozzi et al.~\cite[Question 7.5]{Gennip2014} regarding the consistency of the MBO scheme on random geometric graphs with a fixed time-step size, see Corollary~\ref{cor:bertozzi_qst}. 
To prove the \textit{weak-to-strong} convergence for the heat operators, one reduces to the case when $u_n = u_{|_{\{X_1, . . . , X_n\}}}$ is the restriction of a \textit{smooth} function $u \in C^{\infty}(M)$ to the first $n$ data points. This is then handled as follows: Like any other gradient flow, solutions $v_n(t)$ to the heat equation on the graph $G_n$ of the first $n$ data points with initial datum $v_n(0) = u_n$ satisfy an optimal energy dissipation inequality, i.e.,

\begin{equation*}
E_n[v_n(t)] + \frac{1}{2}\int_0^t |\Delta_{n}v_n(s)|^2_{\mathcal{V}_{n}}ds+ \frac{1}{2}\int_0^t |\frac{d}{ds}v(s)|^2_{\mathcal{V}_{n}}ds \le E_n[u_n],
\end{equation*}
where $E_n$ is the Dirichlet energy on the graph and $| \cdot |_{\mathcal{V}_n}$ is a norm induced by a suitable scalar product so that the Laplacian is self-adjoint. Garc\'{i}a Trillos and Slep\u{c}ev~\cite{GarciaTrillos2018} proved that the energies $E_n$ $\Gamma$-converge in the $TL^2$-sense to the Dirichlet energy in the continuum. Using this result and the lower semicontinuity of the other left hand side terms one can pass to the limit as $n \to +\infty$ in the previous inequality to obtain that $v_n$ converges strongly to a function $v$ in $TL^2(M)$ that satisfies the following inequality:

\begin{align*}
\begin{aligned}
E[v(t)] + \frac{1}{2}\int_0^t \int_{M} |\Delta_{\rho^2} v|^2\rho^2 d\volm ds + \frac{1}{2}\int_0^t \int_{M} |\frac{dv}{ds}|^2\rho^2 d\volm ds \le E[u],
\end{aligned}
\end{align*}
where $E$ is the Dirichlet energy in the continuum. This is nothing but the optimal energy dissipation inequality associated to the $L^2(M)$-gradient flow for the energy $E$, thus it implies that $v(t) = e^{-t\Delta_{\rho^2}}u$.

The second limit, i.e., letting the step size $h$ go to zero, is the most technical and challenging part of our work. The main difficulty is that the heat kernel on a manifold does not enjoy any translation invariance property as the standard Euclidean heat kernel. To overcome this problem, we use a careful localization argument in space-time which allows to approximate the heat kernel on the manifold with the standard heat kernel on the tangent bundle - this is achieved by means of the asymptotic expansion for the heat kernel, c.f. Section~\ref{sec:prelim}. The $\Gamma$-$\limsup$ is achieved by localizing on the reduced boundary of the set of finite perimeter, while the $\Gamma$-$\liminf$ inequality is obtained by means of the blow-up method of Fonseca and M\"{u}ller~\cite{Fonseca1993}, see also~\cite{Alberti1998, Ambrosio2011}.

As we already pointed out earlier, Esedo\={g}lu and Otto gave a minimizing movements interpretation for the MBO scheme in the continuum, which still holds true at the discrete level. When talking about minimizing movements, it is important to specify the dissipation mechanism, i.e., the metric on the space where one constructs the minimizing movements. This interpretation allows one to prove the convergence of the multiphase MBO scheme to suitably defined notions of weak solutions of mean curvature flow, namely BV solutions~\cite{Laux2016}, Brakke's solutions~\cite{Laux2020} and De Giorgi's solutions~\cite{Laux2019lecn, Laux2021}. Previously, the convergence of the MBO scheme in the simple two phase setting was obtained independently in~\cite{Evans1993} and~\cite{Barles1995}. In the latter works, weak solutions are defined in terms of viscosity solutions --- but this notion cannot be extended to the multiphase setting due to the lack of a maximum principle. In the present work it is not important to understand the dissipation mechanism, i.e., it is not important to specify the metric of the minimizing movements, because we just focus on the behavior of (local) minimizers of the energies $F_h$. As the selection of local minimizers strongly depends on the evolution, we will address the problem of the convergence of the dynamics in the follow-up work~\cite{LauxLelmiFuture}. There, we will present a different analysis of the scheme where we exploit the comparison principle to prove convergence to the viscosity solution of mean curvature flow.

Finally, let us point out that we decided to carry out our analysis for the random walk Laplacian. There is no particular reason for this choice, and our results may be suitably modified to obtain similar statements for other choices of the Laplacian --- like the unormalized one. One may even allow for data dependent weights. We summarize these results in Section~\ref{sec:discussions}.

\medskip

The rest of the paper is organized as follows. In Section~\ref{sec:thescheme}, we recall some notation for graphs, we recall the definition of the random walk Laplacian and we recall the multiclass MBO scheme. In Section~\ref{sec:mainres}, we state the main results of this paper. Section~\ref{sec:semisupervised} contains a discussion of the semi-supervised MBO scheme and an explanation of the straight-forward extension of the results of Section~\ref{sec:mainres}  to the semi-supervised setting. Section~\ref{sec:discussions} contains a discussion of the missing ingredient needed for obtaining the joint limit $n \to +\infty$ and $h \downarrow 0$ --- namely the monotonicity of the thresholding energy. This section also contains a discussion on how our results may be extended to other graph Laplacians --- in particular with data dependent weights. Section~\ref{sec:prelim} contains preliminaries about weighted manifolds, the $TL^p$-convergence and other well-known material such as the $\Gamma$-convergence of the Dirichlet energies in the $TL^2$-topology. Section~\ref{sec:proofs} contains the proofs of the results stated in Section~\ref{sec:mainres}. Finally, the \hyperlink{sec:appendix}{Appendix} contains proofs of some known results which are needed but that we were not able to trace back in the literature.

\section{The MBO scheme for data clustering}\label{sec:thescheme}

In this section, we provide the rigorous formulation of the MBO algorithm for data clustering originally given by Bertozzi et al. in~\cite{Merkurjev2013},~\cite{Gennip2014},~\cite{Merkurjev2014}. Let $G = (V, E)$ be a graph with vertex set $V = \{ x_1, . . . , x_n\}$ and let $E$ be the set of edges weighted by $w_{ij} = w_{ji},\ 1 \le i, j \le n$. We assume that $w_{ii} = 0$ for every $i = 1, . . . , n$. For every $i = 1, . . . , n$ we define the degree $d_i$ as

\begin{equation}
d_i := \frac{1}{n}\sum_{j =1}^n w_{ij}.\nonumber
\end{equation}
We will assume that $d_i > 0$ for every $1 \le i \le n$. We let $\mathcal{V}$ be the space of real valued functions defined on $V$. We define an inner product on $\mathcal{V}$ by

\begin{equation}
\langle u, v \rangle_\mathcal{V} = \frac{1}{n} \sum_{i = 1}^n d_i u_i v_i,\ u, v \in \mathcal{V}.\nonumber
\end{equation}
We let $\mathcal{E}$ be the space of antisymmetric functions on $E$. We define an inner product on $\mathcal{E}$ as
\begin{equation}
\langle F, G \rangle_{\mathcal{E}} = \frac{1}{2n^2}\sum_{i, j:\ w_{ij} \neq 0} F_{ij}G_{ij}\frac{1}{w_{ij}},\ F, G \in \mathcal{E}.\nonumber
\end{equation}
Given $\epsilon > 0$ we define the derivative operator $\nabla: \mathcal{V} \to \mathcal{E}$ acting on functions $u: V \to \mathbf{R}$ as

\begin{equation}\label{eq:derivative_op}
(\nabla u)_{ij} = \frac{w_{ij}}{\epsilon}(u_j - u_i).
\end{equation}
We denote by $\operatorname{div}: \mathcal{E} \to \mathcal{V}$ its adjoint with respect to the scalar products on $\mathcal{V}$ and $\mathcal{E}$. Explicitly, we may compute for $F \in \mathcal{E}$

\begin{equation}
(\operatorname{div}F)_i= \frac{1}{2\epsilon d_i} \sum_{j \neq i}\left( F_{ij} - F_{ji}\right).\nonumber
\end{equation} 
Finally, we introduce the \textit{random walk} graph Laplacian $\Delta := \operatorname{div} \circ \nabla: \mathcal{V} \to \mathcal{V}$. Explicitly, $\Delta$ can be identified with the matrix

\begin{equation}
\frac{1}{\epsilon^2}\left( \mathbb{I} - \frac{1}{n}D^{-1}W \right),\nonumber
\end{equation}
where $D = \operatorname{diag}(d_1, . . . , d_n)$ is the diagonal matrix of degrees and $W = (w_{ij})_{i,j=1}^n$ is the matrix of weights. Given $t \in \mathbf{R}$ we let $e^{-t\Delta}$ be the exponential of the matrix $-t\Delta$. If $u \in \mathcal{V}$ then the function $v(t) = e^{-t\Delta}u$ solves the heat equation with initial value $u$ on the graph, i.e.\

\begin{equation}
\begin{cases}
\frac{d}{dt} v(t)= -\Delta v(t),
\\
v(0) = u.
\end{cases}\nonumber
\end{equation}

We are now ready to introduce the MBO scheme for data clustering. Given a natural number $P \le n$, a classification of the points of $G$ into $P$ clusters is a function $\chi : V \to \{0, 1\}^P$ such that $\sum_{m=1}^P \chi^m(x) = 1$ for all $x \in V$. In other words, $\chi$ encodes a partition of the graph $G$ into $P$ clusters $C_m = \mathbf{1}_{\{\chi^m = 1\}}$, $m = 1, . . . , P$. Let $\sigma := (\sigma_{ml})_{1\le m,l \le P} \in \mathbf{R}^{P\times P}$ be a symmetric matrix with $\sigma_{ml} > 0$ for $m \neq l$ and $\sigma_{mm} = 0$. Then the MBO scheme for data clustering is as follows.

\begin{algorithm}[MBO scheme]\label{algo}
Let $P$ be the number of clusters, let $h > 0$ be the time-step size, and let $N$ be the number of iterations to run. Let $\chi_0:V \to \{ 0, 1\}^P$ be a given clustering of the graph into $P$ clusters. To obtain a clustering $\chi_N: V \to \{0, 1\}^P$ using the MBO scheme, define inductively a new clustering $\chi_{q+1}: V \to \{0, 1\}^P$ given the clustering $\chi_q: V \to \{0, 1\}^P$ by performing the following steps for $0 \le q < N$:
\begin{enumerate}
\item \textbf{Diffusion}. For every $m = 1, . . . , P$ define 
\begin{equation}
u_q^m := \sum_{m \neq l} \sigma_{ml} e^{-h\Delta}\chi_q^l.\nonumber
\end{equation}
\item \textbf{Thresholding}. For every $m = 1, . . . , P$ update the cluster by setting
\begin{equation}
\{ \chi_{q+1}^m = 1\} := \left\{ x \in V:\ u_q^m(x) < \min_{l \neq m} u_q^l(x)\right\}.\nonumber
\end{equation}
\end{enumerate}
\end{algorithm}

We define the set $\mathcal{M}_G := \left\{ u: V \to [0,1]^P:\ \sum_{m=1}^P u^m = 1\ \text{on}\ V \right\}$. For $u \in \mathcal{M}_G$ define the thresholding energy

\begin{equation}
E_h^G(u) = \frac{1}{\sqrt{h}} \sum_{i, j=1}^P \sigma_{ij} \langle u^i, e^{-h\Delta}u^j \rangle_\mathcal{V}.
\end{equation}
The following lemma, which is essentially due to Esedo\={g}lu and Otto, is the main motivation for our work. We also refer to~\cite[Proposition 4.6]{Gennip2014}.

\begin{lemma}[\cite{Esedoglu2015}]\label{lem:dissipationMBO}
Assume that the matrix $\sigma$ is negative semidefinite on $(1, . . . , 1)^{\perp}$, that means $v \cdot \sigma v \le 0$ for all $v \in \mathbf{R}^d$ with $v \cdot (1, . . . , 1) = 0$. In the setting of the previous algorithm, to obtain the new clustering $\chi_{q+1}: V \to \{0, 1\}^P$ starting from $\chi_q: V \to \{0, 1\}^P$ define
\begin{equation}
\chi_{q+1} \in \operatorname{argmin}_{u \in \mathcal{M}_G} \left\{ E_h^G(u) - \frac{1}{\sqrt{h}}\sum_{i, j=1}^{P} \sigma_{ij}\big\langle (u^i - \chi_q^i), e^{-h\Delta}(u^j - \chi_q^j) \big\rangle_{\mathcal{V}} \right\}.
\end{equation}
\end{lemma}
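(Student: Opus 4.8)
Fix the iteration index $q$ and write $J_h$ for the functional to be minimized,
\[
J_h(u) := E_h^G(u) - \frac{1}{\sqrt h}\sum_{i,j=1}^P \sigma_{ij}\big\langle u^i-\chi_q^i,\, e^{-h\Delta}(u^j-\chi_q^j)\big\rangle_{\mathcal V},\qquad u\in\mathcal M_G.
\]
The plan is to show that, once the quadratic penalty is expanded, $J_h$ reduces to an \emph{affine} functional of $u$ up to an additive constant, so that its minimum over the polytope $\mathcal M_G$ is attained at a vertex; and that the thresholding step of Algorithm~\ref{algo} selects precisely such a minimizing vertex.

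\emph{Step 1: expansion.} I would first expand the bilinear term using the symmetry of $\sigma$ and the fact that $\Delta=\divv\circ\nabla=\nabla^{*}\nabla$ is self-adjoint (and positive semidefinite) with respect to $\langle\cdot,\cdot\rangle_{\mathcal V}$, hence so is $e^{-h\Delta}$. This yields
\[
\sum_{i,j}\sigma_{ij}\big\langle u^i-\chi_q^i, e^{-h\Delta}(u^j-\chi_q^j)\big\rangle_{\mathcal V}
=\sum_{i,j}\sigma_{ij}\langle u^i,e^{-h\Delta}u^j\rangle_{\mathcal V}
-2\sum_{i,j}\sigma_{ij}\langle u^i,e^{-h\Delta}\chi_q^j\rangle_{\mathcal V}
+\sum_{i,j}\sigma_{ij}\langle\chi_q^i,e^{-h\Delta}\chi_q^j\rangle_{\mathcal V}.
\]
The first term on the right cancels $\sqrt h\,E_h^G(u)$, the last term does not depend on $u$, and $\sum_{j}\sigma_{ij}e^{-h\Delta}\chi_q^j=u_q^i$ since $\sigma_{ii}=0$; therefore
\[
J_h(u)=\frac{2}{\sqrt h}\sum_{i=1}^P\langle u^i,u_q^i\rangle_{\mathcal V}-\frac{1}{\sqrt h}\sum_{i,j}\sigma_{ij}\langle\chi_q^i,e^{-h\Delta}\chi_q^j\rangle_{\mathcal V}.
\]

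\emph{Step 2: pointwise linear programming.} Minimizing $J_h$ over $\mathcal M_G$ is thus equivalent to minimizing $u\mapsto\sum_i\langle u^i,u_q^i\rangle_{\mathcal V}=\frac1n\sum_{x\in V}d(x)\sum_{i=1}^P u^i(x)\,u_q^i(x)$, where $d(x)$ is the degree of $x$. Since every $d(x)>0$ and the constraint defining $\mathcal M_G$ decouples over the vertices — each vector $(u^1(x),\dots,u^P(x))$ ranges independently over the standard simplex $\Sigma^{P-1}=\{a\in[0,1]^P:\sum_m a^m=1\}$ — this can be carried out pointwise: for each $x$ one minimizes the linear functional $a\mapsto a\cdot(u_q^1(x),\dots,u_q^P(x))$ over $a\in\Sigma^{P-1}$, whose minimizers are exactly the convex combinations of the vertices $e_m$ with $u_q^m(x)=\min_l u_q^l(x)$. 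The thresholding step of Algorithm~\ref{algo} assigns $x$ to a cluster $m$ achieving this minimum (which is unique for generic data), so $\chi_{q+1}$ is a pointwise — hence global — minimizer of $J_h$ over $\mathcal M_G$, which is the claim.

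\emph{Step 3: role of the hypothesis.} Finally I would record that the assumption on $\sigma$ is not needed for the argmin characterization above, but is what makes the penalty nonnegative and the scheme a genuine implicit gradient descent: writing $e^{-h\Delta}=\sum_k e^{-h\lambda_k}\phi_k\langle\phi_k,\cdot\rangle_{\mathcal V}$ in a $\langle\cdot,\cdot\rangle_{\mathcal V}$-orthonormal eigenbasis and setting $v^i:=u^i-\chi_q^i$ (so $\sum_m v^m\equiv0$), one gets $\sum_{i,j}\sigma_{ij}\langle v^i,e^{-h\Delta}v^j\rangle_{\mathcal V}=\sum_k e^{-h\lambda_k}\,b_k\cdot\sigma b_k$ with $b_k:=(\langle v^1,\phi_k\rangle_{\mathcal V},\dots,\langle v^P,\phi_k\rangle_{\mathcal V})$ orthogonal to $(1,\dots,1)$; since $e^{-h\lambda_k}>0$ and $b_k\cdot\sigma b_k\le0$ by hypothesis, the sum is $\le0$, and consequently $E_h^G(\chi_{q+1})\le J_h(\chi_{q+1})\le J_h(\chi_q)=E_h^G(\chi_q)$. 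The only real work is the bookkeeping in Step~1 — using symmetry of $\sigma$ together with self-adjointness of $e^{-h\Delta}$ to get the quadratic-in-$u$ terms to cancel; after that the statement is an elementary linear program over a product of simplices, so there is no genuine obstacle.
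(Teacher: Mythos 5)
Your proof is correct and follows essentially the same line of reasoning as the cited argument of Esedo\u{g}lu--Otto (the paper itself does not reproduce a proof, pointing instead to~\cite{Esedoglu2015} and~\cite[Proposition 4.6]{Gennip2014}). Step~1 correctly uses symmetry of $\sigma$ together with self-adjointness of $e^{-h\Delta}$ with respect to $\langle\cdot,\cdot\rangle_{\mathcal V}$ (which follows since $\Delta=\operatorname{div}\circ\nabla$ is self-adjoint by construction of $\operatorname{div}$ as the adjoint of $\nabla$) to cancel the quadratic-in-$u$ terms, leaving an affine functional of $u$ up to a $u$-independent constant. Step~2 correctly observes that the constraint set $\mathcal M_G$ is a product of simplices indexed by the vertices, and that the linear objective $\sum_i\langle u^i,u_q^i\rangle_{\mathcal V}$ has strictly positive weights $d(x)/n$, so minimization decouples pointwise; the thresholding rule of Algorithm~\ref{algo} picks an extreme point of the pointwise argmin, hence a global minimizer of $J_h$ over $\mathcal M_G$. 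Step~3 correctly isolates where the negative semidefiniteness of $\sigma$ on $(1,\dots,1)^\perp$ enters, namely that the subtracted penalty is nonnegative (diagonalize $e^{-h\Delta}$ and use $\sum_m v^m\equiv0$), which is what turns the argmin characterization into the energy-dissipation statement $E_h^G(\chi_{q+1})\le E_h^G(\chi_q)$. One minor caveat you already flag implicitly: Algorithm~\ref{algo}'s strict inequality in the thresholding step leaves $\chi_{q+1}$ undefined at vertices with ties; the lemma's ``$\chi_{q+1}\in\operatorname{argmin}$'' formulation quietly absorbs this by permitting any choice among the minimizing labels, which is consistent with your pointwise linear-programming description.
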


\section{Main results}\label{sec:mainres}

In this section we introduce the setting of our problem and we state our main results. For the technical background and definitions we refer to Section~\ref{sec:prelim}. 

We assume that $M \subset \mathbf{R}^d$ is a $k$-dimensional compact Riemannian submanifold of $\mathbf{R}^d$. Let $\nu = \rho\volm \in \mathcal{P}(M)$ be a probability measure on $M$, absolutely continuous with respect to the volume measure with a smooth and positive density $\rho$. Assume that $\{X_i\}_{i \in \mathbf{N}}$ are iid random points on $M$, distributed according to $\nu$. For any $n \in \mathbf{N}$ and $\epsilon > 0$ we define the random graph $G_{n, \epsilon}$ with vertex set given by $V_{n, \epsilon} = \left\{X_1, . . . , X_n\right\}$ and weights

\begin{equation}\label{eq:def_weight_rg}
w_{ij}^{(n, \epsilon)} = \frac{1}{\epsilon^k} \eta\left( \frac{| X_i - X_j |_d}{\epsilon}\right),\ i \neq j,
\end{equation}
and $w_{ii}^{(n, \epsilon)} = 0$, where $\eta: [0, +\infty) \to [0, +\infty)$ is a given function and $|\cdot |_d$ denotes the standard Euclidean norm on $\mathbf{R}^d$. Here $\epsilon$ is the same as in~\eqref{eq:derivative_op}. We stress that the graphs we constructed are random object and sometimes we will make this randomness explicit by specifying the dependence of the graph on an additional variable $\omega \in \Omega$, where $\Omega$ is the probability space on which the random objects $\{X_i\}_{i \in \mathbf{N}}$ are defined. On the function $\eta$ we set the following conditions:

\begin{enumerate}
\item $\eta(0) > 0$ and $\eta$ is continuous at $0$,
\item $\eta(t) \ge 0$ for every $t > 0$, $\eta$ is nonincreasing and $\eta$ is $C^2((0,+\infty))$,
\item $\eta, \eta', \eta''$ have exponential decay.
\end{enumerate}
Define $C_1 = \int_{\mathbf{R}^k} \eta(|y|_k)dy$ and $C_2 = \int_{\mathbf{R}^k} \eta(|y|_k)y_1^2dy$. Assume that $P \in \mathbf{N}$ and let $\sigma \in \mathbf{R}^{P \times P}$ be a symmetric matrix, negative definite on $(1, . . . , 1)^{\perp}$. We also assume that $\sigma_{ii} = 0$ for each $i = 1, . . . , P$ and that $\sigma_{ij} = \sigma_{ji} > 0$ for all $i \neq j$. Finally, we assume that the coefficients of $\sigma$ satisfy the \textit{triangle inequality}, that is

\begin{equation*}
\sigma_{ij} \le \sigma_{il} + \sigma_{lj}\quad \forall i,j,l \in \{1, . . . , P\}.
\end{equation*}
These assumptions are satisfied, for example, if we let $\sigma$ be the matrix defined by

\begin{equation*}
\sigma_{ij} = \begin{cases}
1\ &\text{if}\ i \neq j,
\\ 0\ &\text{otherwise}.
\end{cases}
\end{equation*}
The fact that the matrix $\sigma$ is negative definite is needed in order that the MBO scheme dissipates the thresholding energy at every iteration, cf.\ Lemma~\ref{lem:dissipationMBO}, while the triangle inequality ensures the lower semicontinuity of the energy. For each $n \in \mathbf{N}$ define the set

\begin{equation}
\mathcal{M}_n := \left\{ u: V_n \to [0,1]^P:\ \sum_{i=1}^P u^i = 1 \right\}.\nonumber
\end{equation}
Given $h > 0$, define the thresholding energies on $\mathcal{M}_n$ as

\begin{equation}\label{eq:thresh_energy_def}
E^h_{n, \epsilon}(u) := \frac{1}{\sqrt{h}} \sum_{i, j =1}^P \sigma_{ij} \langle u^i , e^{-h\Delta_{n, \epsilon}}u^j \rangle_{\mathcal{V}_n},\quad u \in \mathcal{M}_n.
\end{equation}
We also define the set

\begin{equation}
\mathcal{M} := \left\{ u: M \to [0,1]^P\ \text{measurable}:\ \sum_{i=1}^P u^i = 1\ \text{a.e.} \right\}.\nonumber
\end{equation}
Let $\mu = \xi\volm$ for $\xi \in C^{\infty}(M)$, $\xi > 0$. Given $h>0$ we define the thresholding energy on the weighted manifold $(M, \mu)$ as

\begin{equation}\label{eq:thresh_energy_manifold}
E_h(u) := \frac{1}{\sqrt{h}}\sum_{i,j=1}^P\sigma_{ij} \int_M u^i(x)e^{-h\Delta_{\xi}}u^j(x) d\mu,\quad u \in \mathcal{M}.
\end{equation}
Here, $\Delta_{\xi}$ is the weighted Laplacian on $(M, \mu)$, which is defined by its action on smooth functions $f \in C^{\infty}(M)$ as

\begin{equation}
\Delta_{\xi} f = -\frac{1}{\xi}\operatorname{div}\left( \xi \nabla f \right),
\end{equation}
and $e^{-t\Delta_{\xi}}$ is the corresponding heat operator; we refer to Section~\ref{sec:prelim} for the relevant background and definitions. We are now in a position to state our main results.

\begin{theorem}[Discrete to nonlocal $\Gamma$-convergence]\label{thm:discretenonloc}
Let $M$ be a $k$-dimensional compact Riemannian submanifold of $\mathbf{R}^d$. Let $\nu = \rho\volm \in \mathcal{P}(M)$ be a probability measure, absolutely continuous with respect to the volume element with a smooth and positive density. Let $\{X_i\}_{i \in \mathbf{N}}$ be a sequence of iid random points in $M$, distributed according to $\nu$. Let $\epsilon_n > 0$ be a sequence such that 
\begin{equation}\label{eq:assumptions_rates_epsilon}
\lim_{n \to +\infty} \frac{\epsilon_n^{k+2}n}{\operatorname{log}(n)} = +\infty,\ \lim_{n\to +\infty} \epsilon_n = 0.
\end{equation}
Let $G_{n, \epsilon_n}$ be the corresponding random graphs. It holds almost surely that for any $h > 0$ if $v_n$ converge weakly to $v$ in $TL^2(M)$ then
\begin{equation*}
\lim_{n\to +\infty} E_{n, \epsilon_n}^h(v_n) = \sqrt{\frac{C_1C_2}{2}}E_{\frac{C_2h}{2C_1}}(v),
\end{equation*}
where the thresholding energy on the right hand side corresponds to the weight $\xi = \rho^2$. Moreover, every sequence $v_n \in \mathcal{M}_n$ has a subsequence converging weakly in $TL^2(M)$ and every limit point lies in $\mathcal{M}$.
\end{theorem}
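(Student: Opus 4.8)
The plan is to exploit the bilinear structure
\begin{equation*}
E_{n,\epsilon_n}^h(v_n) = \frac{1}{\sqrt h}\sum_{i,j=1}^P \sigma_{ij}\,\langle v_n^i,\, e^{-h\Delta_{n,\epsilon_n}}v_n^j\rangle_{\mathcal{V}_n}
\end{equation*}
and to reduce the whole statement to two facts. First, a \emph{weak-to-strong smoothing property} of the graph heat operators: if $v_n \rightharpoonup v$ weakly in $TL^2(M)$, then $e^{-h\Delta_{n,\epsilon_n}}v_n \to e^{-h'\Delta_{\rho^2}}v$ \emph{strongly} in $TL^2(M)$, with $h' := C_2 h/(2C_1)$ the time rescaled by the consistency constant of the random-walk Laplacian. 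Second, the convergence of the inner products: since $\langle\cdot,\cdot\rangle_{\mathcal{V}_n}$ carries the degree weights $d_i$, which satisfy $d_i \to C_1\rho(X_i)$ almost surely, these pairings converge to $C_1\langle\cdot,\cdot\rangle_{L^2(\mu)}$ along pairs with one weakly and one strongly $TL^2$-convergent factor (using the elementary fact that a weak times a strong $L^2$-limit equals the product of the limits). Granting both, each summand converges to $C_1\int_M v^i\, e^{-h'\Delta_{\rho^2}}v^j\, d\mu$, and bookkeeping of the constants (recalling $\mu = \rho^2\volm$ and $E_{h'}(v) = \tfrac{1}{\sqrt{h'}}\sum\sigma_{ij}\int v^i e^{-h'\Delta_{\rho^2}}v^j\, d\mu$) rewrites the limit as $\sqrt{C_1 C_2/2}\,E_{h'}(v)$. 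We note in passing that this equality — sequential continuity of $E_{n,\epsilon_n}^h$ along weak $TL^2$-convergence — together with the compactness in the second half of the statement, is precisely $\Gamma$-convergence to the continuous functional $\sqrt{C_1 C_2/2}\,E_{h'}$.

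The heart of the matter is the weak-to-strong property, which I would establish in two steps. \emph{Step 1 (smooth data).} For $\phi \in C^\infty(M)$ — so that $v_n := \phi|_{V_n}$ converges strongly in $TL^2(M)$ to $\phi$ — the graph heat flow $s \mapsto v_n(s) := e^{-s\Delta_{n,\epsilon_n}}\phi|_{V_n}$ is the $L^2(\mathcal{V}_n)$-gradient flow of the graph Dirichlet energy $E_{n,\epsilon_n}$ and obeys the optimal energy dissipation inequality recalled in the introduction. Using the Garc\'{i}a Trillos--Slep\v{c}ev $\Gamma$-convergence (and equi-coercivity) of the graph Dirichlet energies to a fixed multiple of the weighted Dirichlet energy $E$ on $(M,\mu)$ in the $TL^2$-topology --- valid almost surely under \eqref{eq:assumptions_rates_epsilon} --- together with the lower semicontinuity of the $L^2$-norm of the velocity along $TL^2$-convergence of curves and the (elementary, $\phi$ being smooth) convergence of the initial graph energies, one passes to the limit and finds that every $TL^2$-limit curve satisfies the continuum optimal energy dissipation inequality for the $L^2(\mu)$-gradient flow of $E$; this identifies the limit as $e^{-s'\Delta_{\rho^2}}\phi$. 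Strong, rather than merely weak, convergence at each fixed time then follows by combining the energy identity $\tfrac{d}{ds}\|v_n(s)\|_{\mathcal{V}_n}^2 = -2\langle v_n(s), \Delta_{n,\epsilon_n}v_n(s)\rangle_{\mathcal{V}_n}$ with Fatou's lemma and the $\Gamma$-$\liminf$ inequality for the graph Dirichlet energies, which forces $\|v_n(s)\|_{\mathcal{V}_n} \to \sqrt{C_1}\,\|e^{-s'\Delta_{\rho^2}}\phi\|_{L^2(\mu)}$, hence strong $TL^2$-convergence.

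\emph{Step 2 (general data).} Given $v_n \rightharpoonup v$ weakly in $TL^2(M)$, the parabolic smoothing estimate $E_{n,\epsilon_n}[e^{-h\Delta_{n,\epsilon_n}}v_n] \lesssim \tfrac1h\|v_n\|_{\mathcal{V}_n}^2$ shows that $\{e^{-h\Delta_{n,\epsilon_n}}v_n\}_n$ has uniformly bounded graph Dirichlet energy and $L^2$-norm, hence is \emph{strongly} $TL^2$-precompact by the equi-coercivity in Garc\'{i}a Trillos--Slep\v{c}ev. On the other hand, testing against smooth $\phi$ and using self-adjointness of $\Delta_{n,\epsilon_n}$ together with Step 1 applied to $\phi$, $\langle e^{-h\Delta_{n,\epsilon_n}}v_n, \phi|_{V_n}\rangle_{\mathcal{V}_n} = \langle v_n, e^{-h\Delta_{n,\epsilon_n}}\phi|_{V_n}\rangle_{\mathcal{V}_n} \to C_1\langle v, e^{-h'\Delta_{\rho^2}}\phi\rangle_{L^2(\mu)} = C_1\langle e^{-h'\Delta_{\rho^2}}v, \phi\rangle_{L^2(\mu)}$, i.e.\ $e^{-h\Delta_{n,\epsilon_n}}v_n \rightharpoonup e^{-h'\Delta_{\rho^2}}v$ weakly in $TL^2(M)$; being strongly precompact with this unique weak limit point, the full sequence converges strongly. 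Finally, the compactness assertion of the theorem is immediate: any $v_n \in \mathcal{M}_n$ has $0 \le v_n^i \le 1$, so $\sup_n \|v_n\|_{L^2(\nu_n)} < \infty$; since the empirical measures $\nu_n = \tfrac1n\sum_i \delta_{X_i}$ converge weakly to $\nu$ almost surely, transporting the $v_n$ via near-optimal maps produces a weakly $TL^2$-convergent subsequence, and the constraints $v^i \in [0,1]$ and $\sum_i v^i = 1$ persist in the limit since they cut out a closed convex subset of $L^2(\nu)^P$.

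The main obstacle is the identification in Step 1: converting the graph optimal energy dissipation inequality into the continuum one requires, besides the quoted $\Gamma$-convergence of the Dirichlet energies, the lower semicontinuity of the dissipation terms under weak $TL^2$-convergence of curves and the correct identification of the limiting velocity with $-\Delta_{\rho^2}v$, followed by the passage from this inequality to the conclusion $v(s) = e^{-s'\Delta_{\rho^2}}\phi$ (a uniqueness statement for the $L^2(\mu)$-gradient flow). Two further points need care throughout: making every estimate hold on a single full-measure event and for all $h>0$ simultaneously --- which rests on the almost sure validity, under \eqref{eq:assumptions_rates_epsilon}, of the transport and connectivity estimates of Garc\'{i}a Trillos--Slep\v{c}ev --- and the bookkeeping of the constants $C_1, C_2$ relating $\Delta_{n,\epsilon_n}$ and the weight of $\mathcal{V}_n$ to $\Delta_{\rho^2}$ and $\mu$, which is what produces the prefactor $\sqrt{C_1 C_2/2}$ and the time rescaling $C_2 h/(2C_1)$.
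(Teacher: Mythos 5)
Your proposal follows essentially the same route as the paper, which reduces the whole statement to the weak-to-strong convergence of graph heat operators (the paper's Theorem~\ref{thm:conv_heat}). Your decomposition --- prove the heat-operator convergence first for smooth initial data via the optimal energy dissipation inequality and the $\Gamma$-convergence of the graph Dirichlet energies, then extend to general weakly convergent data via smoothing and self-adjointness, and finally combine with the weak$\times$strong convergence of the $\mathcal{V}_n$-pairing and the convergence of the degrees --- is exactly the paper's. One place where you genuinely improve on the paper's presentation is the general-data step: you invoke the standard parabolic smoothing estimate $E_n\bigl[e^{-h\Delta_n}v_n\bigr]\lesssim \frac1h|v_n|^2_{\mathcal{V}_n}$ (which follows immediately from monotonicity of the graph Dirichlet energy along the flow) to get a uniform Dirichlet bound at time $h$, whereas the paper's Step~5 first finds an intermediate time $0<s<h$ by a Fatou argument and then uses the energy dissipation inequality on $[s,h]$ to propagate the bound. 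Your version is cleaner and avoids a subsequence extraction.

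There is, however, a circularity in your Step~1 as written. You propose to upgrade weak to strong $TL^2$-convergence at each fixed time by arguing, via Fatou's lemma and the $\Gamma$-$\liminf$ inequality for the graph Dirichlet energies, that $\|v_n(s)\|_{\mathcal{V}_n}\to \sqrt{C_1}\,\|e^{-s'\Delta_{\rho^2}}\phi\|_{L^2(\mu)}$. But the $\Gamma$-$\liminf$ inequality for $E_n$ is stated (and true) only along \emph{strongly} $TL^2$-convergent sequences $v_n(s)\to v(s)$; under mere weak $TL^2$-convergence it can fail. So you would be using the conclusion (strong convergence at a.e.\ $s$) to derive the estimate that is supposed to yield it. The correct route, and the one the paper actually takes, is to use the compactness (equi-coercivity) part of the Dirichlet $\Gamma$-convergence directly: the uniform Dirichlet energy bound from the energy dissipation inequality gives strong $TL^2$-precompactness of $\{v_n(t_p)\}_n$ at each time $t_p$, a diagonal argument over a countable dense set $\{t_p\}$ together with the equi-Lipschitz-in-time bound $\sup_n\|\tfrac{d}{ds}\tilde v_n\|_{L^\infty(L^2)}<\infty$ yields a subsequence converging strongly in $L^2(\nu)$ at \emph{every} time, and uniqueness of the limiting optimal-energy-dissipation curve (Lemma~\ref{lem:energy_diss}) then removes the subsequence. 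You do mention equi-coercivity, so the ingredients are present, but the explicit argument you give for strong convergence is not sound and should be replaced by the compactness/diagonal argument. You should also note that passing to the limit in the $\int|\Delta_{n}v_n|^2$ term of the dissipation inequality is not just lower semicontinuity of a norm: one must first identify the weak limit of $\Delta_n v_n$ as $\tfrac{C_2}{2C_1}\Delta_{\rho^2}v$, which the paper does by a duality argument against smooth test functions using the self-adjointness of the graph Laplacian and Theorem~\ref{thm:conv_lap}; you flag this as a point needing care, which is right, but it is a nontrivial step rather than a routine one.
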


The main tool for proving Theorem~\ref{thm:discretenonloc} is the following strong convergence of the heat operators on the graphs to the corresponding heat operator on the manifold.

\begin{theorem}\label{thm:conv_heat}
Let the assumptions of Theorem~\ref{thm:discretenonloc} be in place. Then it holds almost surely that if $u_n \in \mathcal{V}_n$ is a sequence of functions converging weakly to $u \in L^2(M)$ in $TL^2)M)$, then for every $t > 0$ we have

\begin{equation}
\lim_{n \to +\infty} e^{-t\Delta_{n, \epsilon_n}}u_n = e^{-\frac{C_2}{2C_1} t \Delta_{\rho^2}}u\ \text{strongly in}\ TL^2(M).\nonumber
\end{equation}
\end{theorem}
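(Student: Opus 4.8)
The plan is to exploit the gradient-flow structure of the heat semigroups. Throughout write $\alpha:=\tfrac{C_2}{2C_1}$ and $\mu:=\rho^2\volm$, and work on the almost sure event on which the estimates of Section~\ref{sec:prelim} hold: the pointwise consistency $\Delta_{n,\epsilon_n}(\phi|_{V_n})\to\alpha\Delta_{\rho^2}\phi$ uniformly on $M$ for every $\phi\in C^\infty(M)$ (this is the step that needs the rate $\tfrac{\epsilon_n^{k+2}n}{\log n}\to+\infty$), the convergence of the degrees $d_i\to C_1\rho$ (so that $\langle a_n,b_n\rangle_{\mathcal V_n}\to C_1\langle a,b\rangle_{L^2(\mu)}$ whenever $a_n\to a$ and $b_n\to b$ strongly in $TL^2(M)$), and the $\Gamma$-convergence together with the accompanying compactness of the graph Dirichlet energies $E_n$ in the $TL^2$-topology due to Garc\'ia Trillos and Slep\v cev~\cite{GarciaTrillos2018}.

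\textbf{Step 1: smooth initial data.} I would first treat $u_n=u|_{V_n}$ with $u\in C^\infty(M)$. Set $v_n(s):=e^{-s\Delta_{n,\epsilon_n}}u_n$, which is the $\mathcal V_n$-gradient flow of $E_n$. Since $\Delta_{n,\epsilon_n}\ge 0$ is self-adjoint on $\mathcal V_n$ one has, for every $s\ge0$, $\|v_n(s)\|_{\mathcal V_n}\le\|u_n\|_{\mathcal V_n}$, $\|\tfrac{d}{ds}v_n(s)\|_{\mathcal V_n}=\|\Delta_{n,\epsilon_n}v_n(s)\|_{\mathcal V_n}\le\|\Delta_{n,\epsilon_n}u_n\|_{\mathcal V_n}$ and $E_n[v_n(s)]\le E_n[u_n]$, and by the consistency estimate all right-hand sides are bounded uniformly in $n$. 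Hence $(v_n)$ is uniformly Lipschitz in time (the $\mathcal V_n$-norm controlling the $TL^2$-metric up to a vanishing error) and, for fixed $s$, $TL^2$-precompact by the Garc\'ia Trillos--Slep\v cev compactness theorem; by Arzel\`a--Ascoli a subsequence converges, $v_n\to v$ in $C([0,T];TL^2(M))$ for every $T>0$, with $v(0)=u$, $v\in\mathrm{Lip}([0,\infty);L^2(\mu))\cap L^\infty([0,\infty);H^1(M))$. I would then identify $v$ by testing the graph heat equation against $\psi(s)\phi$ with $\psi\in C^1_c((0,\infty))$ and $\phi\in C^\infty(M)$: using $\langle\Delta_{n,\epsilon_n}u,\phi\rangle_{\mathcal V_n}=\langle\nabla u,\nabla\phi\rangle_{\mathcal E_n}$, the consistency $\Delta_{n,\epsilon_n}(\phi|_{V_n})\to\alpha\Delta_{\rho^2}\phi$ and $\langle\cdot,\cdot\rangle_{\mathcal V_n}\to C_1\langle\cdot,\cdot\rangle_{L^2(\mu)}$, one passes to the limit and finds that $v$ is the unique weak solution of $\partial_s v=-\alpha\Delta_{\rho^2}v$ with $v(0)=u$, i.e.\ $v(s)=e^{-\alpha s\Delta_{\rho^2}}u$. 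Since the limit is independent of the subsequence, the full sequence converges, proving the claim for smooth data. (Alternatively, and this is the route advertised in the introduction, one passes to the limit in the optimal energy dissipation equality $E_n[v_n(t)]+\int_0^t\|\tfrac{d}{ds}v_n\|_{\mathcal V_n}^2\,ds=E_n[u_n]$ using the $\Gamma$-$\liminf$ inequality for $E_n$ and lower semicontinuity of the dissipation term; the resulting continuum energy dissipation inequality, by De Giorgi's characterization, pins down $v$ as the gradient flow.)

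\textbf{Step 2: general weakly convergent data.} Now let $u_n\rightharpoonup u$ weakly in $TL^2(M)$, so $\sup_n\|u_n\|_{\mathcal V_n}<\infty$. Fix $t>0$ and set $w_n:=e^{-(t/2)\Delta_{n,\epsilon_n}}u_n$. The analyticity estimate $\|\Delta_{n,\epsilon_n}e^{-s\Delta_{n,\epsilon_n}}\|_{\mathcal V_n\to\mathcal V_n}\le\tfrac{1}{es}$ gives $E_n[w_n]\le\tfrac{C}{t}\|u_n\|_{\mathcal V_n}^2$ and $\|\Delta_{n,\epsilon_n}w_n\|_{\mathcal V_n}\le\tfrac{C}{t}\|u_n\|_{\mathcal V_n}$, both bounded uniformly in $n$. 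In particular $(w_n)$ has uniformly bounded Dirichlet energy, hence a $TL^2$-convergent subsequence $w_n\to w$; testing against smooth $\phi$, using Step 1 (so that $e^{-(t/2)\Delta_{n,\epsilon_n}}(\phi|_{V_n})\to e^{-\alpha(t/2)\Delta_{\rho^2}}\phi$ strongly in $TL^2(M)$) and the fact that products of weakly and strongly $TL^2$-convergent sequences converge to the product of the limits, one identifies $w=e^{-\alpha(t/2)\Delta_{\rho^2}}u$, so the full sequence converges. Finally $e^{-t\Delta_{n,\epsilon_n}}u_n=e^{-(t/2)\Delta_{n,\epsilon_n}}w_n$, and since the curve $s\mapsto e^{-s\Delta_{n,\epsilon_n}}w_n$ on $[0,t/2]$ is uniformly Lipschitz in time (because $\|\Delta_{n,\epsilon_n}w_n\|_{\mathcal V_n}$ is bounded) and has uniformly bounded Dirichlet energy, the argument of Step 1 applies verbatim and yields $e^{-s\Delta_{n,\epsilon_n}}w_n\to e^{-\alpha s\Delta_{\rho^2}}w$ strongly in $TL^2(M)$ for $s\in[0,t/2]$; at $s=t/2$ this reads $e^{-t\Delta_{n,\epsilon_n}}u_n\to e^{-\alpha t\Delta_{\rho^2}}u$ strongly in $TL^2(M)$, as claimed.

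\textbf{Main obstacle.} The variational skeleton (energy dissipation, $\Gamma$-convergence of the Dirichlet energies, uniqueness of the limiting heat flow) is soft. The real work is in the compatibility estimates linking the varying spaces $\mathcal V_n$ to the fixed limit space $L^2(\mu)$ through the $TL^2$ framework: comparing the $\mathcal V_n$-norm with the $TL^2$-metric, hence the equicontinuity in time; the convergence $\langle\cdot,\cdot\rangle_{\mathcal V_n}\to C_1\langle\cdot,\cdot\rangle_{L^2(\mu)}$; and above all the uniform consistency of the random-walk graph Laplacian on smooth functions, $\Delta_{n,\epsilon_n}(\phi|_{V_n})\to\tfrac{C_2}{2C_1}\Delta_{\rho^2}\phi$, which holds almost surely precisely under the stronger rate $\tfrac{\epsilon_n^{k+2}n}{\log n}\to+\infty$ and is what forces that hypothesis. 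Once these are in place, passing to the limit in the tested equation (or in the energy) and invoking uniqueness of the heat flow closes the argument.
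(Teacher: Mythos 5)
Your argument is correct and follows the same skeleton as the paper's: first treat restrictions of smooth functions, use the gradient-flow bounds together with the $\Gamma$-compactness of the graph Dirichlet energies to extract a strong $TL^2$-limit and identify it as the continuum heat flow, then bootstrap to weakly convergent data by first applying a smoothing step of size $t/2$. The variations you introduce are all sound but worth noting. For the identification in Step~1 you pass to the limit in the weak formulation of the graph heat equation tested against $\psi(s)\phi(x)$, whereas the paper passes to the limit in the optimal energy dissipation inequality of Lemma~\ref{lem:energy_diss_graph} and then invokes Lemma~\ref{lem:energy_diss} (the energy-dissipation characterization of the heat flow); you flag the latter as an alternative. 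For the regularizing step in Step~2 you use the spectral analyticity bound $\|\Delta_{n,\epsilon_n}e^{-s\Delta_{n,\epsilon_n}}\|_{\mathcal V_n\to\mathcal V_n}\le(es)^{-1}$ — valid since $\Delta_{n,\epsilon_n}$ is a nonnegative self-adjoint operator on $\mathcal V_n$ — to bound $E_n[w_n]$ and $\|\Delta_{n,\epsilon_n}w_n\|_{\mathcal V_n}$ directly; the paper instead integrates $\tfrac{d}{ds}\|v_n\|_{\mathcal V_n}^2=-|\nabla_n v_n|^2_{\mathcal E_n}$ and uses Fatou to select a good intermediate time $s\in(0,t)$, then propagates by monotonicity of the Dirichlet energy. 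Your analyticity route is cleaner and gives quantitative control at every time. Finally, your conclusion of Step~2 — after identifying $w=e^{-\alpha(t/2)\Delta_{\rho^2}}u$ via duality, re-run the whole Step~1 machinery with $w_n$ as initial data — is correct (and your weak-formulation route makes this possible, since it needs no $\Gamma$-$\limsup$ at the initial time), but it is a detour: the paper (and indeed your own ingredients) allow one to invoke duality directly at time $t$, i.e.\ once $E_n[e^{-t\Delta_{n,\epsilon_n}}u_n]\le E_n[w_n]$ is uniformly bounded, extract a strong $TL^2$-limit of $e^{-t\Delta_{n,\epsilon_n}}u_n$ and identify it via $\langle e^{-t\Delta_{n,\epsilon_n}}u_n,\phi\rangle_{\mathcal V_n}=\langle u_n,e^{-t\Delta_{n,\epsilon_n}}\phi\rangle_{\mathcal V_n}$ together with Step~1 applied to $\phi$. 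Either way, the proof stands.
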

As a Corollary of Theorem~\ref{thm:conv_heat} we also obtain the following result about the consistency of one step of MBO in the large data limit, which, in the setting of random geometric graphs, answers positively to a question by Bertozzi et al.~\cite[Question 7.5]{Gennip2014}.

\begin{corollary}\label{cor:bertozzi_qst}
Let the assumptions of Theorem~\ref{thm:discretenonloc} hold true. Then the following holds almost surely: Let $\chi_n: V_n \to \{0, 1\}^P$ be such that the sequence $\{\chi_n\}_{n \in \mathbf{N}}$ converges weakly in $TL^2(M)$ to a function $\chi: M \to \{0, 1\}^P$. Denote by $\chi_n^h: V_n \to \{0, 1\}^P$ the outcome of one step ($N=1$) of the MBO scheme (Algorithm~\ref{algo}) on the $n$-th graph with step size $h>0$ and initial clustering $\chi_n$. Denote by $\chi^h:M \to \{0, 1\}^P$ the outcome of one step of the MBO scheme on the manifold (Algorithm~\ref{algoman}) with initial value $\chi$, step size $h > 0$ and diffusion parameter $\kappa = \frac{C_2}{2C_1}$. Then the sequence $\{\chi_n^h\}_{n \in \mathbf{N}}$ converges weakly to $\chi^h$ in $TL^2(M)$.
By induction, the convergence holds for any number $N$ of iterations.
\end{corollary}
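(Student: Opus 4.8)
The plan is to decompose one MBO step into its two constituent operations --- the linear diffusion and the pointwise thresholding --- and to show that the first maps weakly convergent sequences to strongly convergent ones in $TL^2(M)$, while the second is continuous at the limiting diffused configuration. Throughout I work on the almost sure event on which Theorem~\ref{thm:conv_heat} holds (which also contains the event $\nu_n\to\nu$ weakly, $\nu_n:=\frac1n\sum_{i\le n}\delta_{X_i}$). For the \emph{diffusion} step, each component $\chi_n^l$ is $\{0,1\}$-valued, hence lies in $\mathcal{V}_n$, and converges weakly to $\chi^l$ in $TL^2(M)$ by hypothesis; Theorem~\ref{thm:conv_heat} gives $e^{-h\Delta_{n,\epsilon_n}}\chi_n^l\to e^{-\kappa h\Delta_{\rho^2}}\chi^l$ strongly in $TL^2(M)$ with $\kappa=C_2/(2C_1)$. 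Taking the fixed finite linear combinations dictated by $\sigma$, the diffused functions $u_n^m:=\sum_{l\neq m}\sigma_{ml}\,e^{-h\Delta_{n,\epsilon_n}}\chi_n^l$ converge strongly in $TL^2(M)$ to $u^m:=\sum_{l\neq m}\sigma_{ml}\,e^{-\kappa h\Delta_{\rho^2}}\chi^l$, which is precisely the diffused function produced by Algorithm~\ref{algoman} with diffusion parameter $\kappa$; here one uses that a single stagnating sequence of transport maps $T_n$ with $T_{n\#}\nu=\nu_n$ simultaneously realizes the strong convergence of all $P$ sequences.

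For the \emph{thresholding} step, extract a subsequence along which $u_n^l\circ T_n\to u^l$ in $L^2(\nu)$ and $\nu$-a.e., for every $l$. Set $S:=\{x\in M:\ \min_{1\le m\le P}u^m(x)\ \text{is attained at two or more indices}\}$. For $\nu$-a.e.\ $x\notin S$ the strictly minimizing index of $(u^1(x),\dots,u^P(x))$ is unique, so for $n$ large it also strictly minimizes $(u_n^1(T_nx),\dots,u_n^P(T_nx))$, whence $\chi_n^h(T_nx)\to\chi^h(x)$ for $\nu$-a.e.\ $x\notin S$. Provided $\nu(S)=0$ --- equivalently $\mu(S)=0$, since $\nu$ and $\mu=\rho^2\volm$ are mutually absolutely continuous --- this holds $\nu$-a.e.\ on $M$, and as $\chi_n^h$ is bounded by $1$, dominated convergence gives $\chi_n^h\circ T_n\to\chi^h$ in $L^2(\nu)$, i.e.\ $\chi_n^h\to\chi^h$ strongly, hence weakly, in $TL^2(M)$ along the subsequence. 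Since the limit depends only on $\chi$, the whole sequence converges.

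The genuinely delicate point --- and the main obstacle --- is the claim $\mu(S)=0$, i.e.\ ruling out ``ties'' in the minimum for the limiting diffused configuration, which is exactly where the thresholding map is discontinuous. For $i\neq j$ the difference $u^i-u^j$ is a fixed nonzero linear combination of the functions $e^{-\kappa h\Delta_{\rho^2}}\chi^l$, and I would exclude its vanishing on a set of positive $\mu$-measure via a unique continuation argument for the heat flow on $M$: a solution of the heat equation that vanishes on a nonempty open set at a positive time vanishes identically, and together with the spatial smoothing of $e^{-\kappa h\Delta_{\rho^2}}$ and the connectedness of $M$ this forces the coefficients to degenerate. Alternatively, one restricts --- as is customary for consistency statements of thresholding schemes --- to the generic class of continuum clusterings for which all the relevant tie sets are $\mu$-null; it is convenient to formulate this as a property stable under the scheme.

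Finally, \emph{induction} on $N$: one MBO step maps $\{0,1\}^P$-valued clusterings summing to $1$ to clusterings of the same type, so the two steps above apply verbatim with $(\chi_n,\chi)$ replaced by $(\chi_n^h,\chi^h)$, yielding convergence after two iterations, and a finite induction closes the proof --- the non-degeneracy of the previous paragraph being invoked at each stage for the newly produced continuum clustering.
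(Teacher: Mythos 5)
Your route differs materially from the paper's: the paper proves weak-$TL^2$ $\Gamma$-convergence of the augmented energies $E^h_{n,\epsilon_n}-D^h_{n,\epsilon_n}$ from Lemma~\ref{lem:dissipationMBO} to $\sqrt{C_1C_2/2}\,E_{C_2h/2C_1}-D_h$, and then concludes from the minimizing-movements interpretations (Lemma~\ref{lem:dissipationMBO} and Lemma~\ref{lem:minmovman}) together with the general fact that minimizers of $\Gamma$-converging functionals converge; you instead trace one MBO iteration directly, using Theorem~\ref{thm:conv_heat} to upgrade the weak $TL^2$-convergence of $\chi_n^l$ to strong $TL^2$-convergence of the diffused profiles, and then argue that thresholding is $\nu$-a.e.\ continuous off the tie set $S$. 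Your version is more hands-on, actually yields strong rather than merely weak $TL^2$-convergence of the outputs, and is the natural argument for the original question; the paper's is shorter because it reuses the $\Gamma$-convergence machinery it has already built.

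The one place where your argument is genuinely incomplete is the claim $\mu(S)=0$, and it is worth noting that the paper's proof carries the same implicit gap: the $\Gamma$-limit $\sqrt{C_1C_2/2}\,E_{C_2h/2C_1}-D_h$ is \emph{affine} in $u$, so its minimizers over $\mathcal{M}$ form, pointwise in $x$, the face of the simplex where $\min_i\sum_j\sigma_{ij}e^{-\kappa h\Delta_{\rho^2}}\chi^j(x)$ is attained; passing from ``weak limit points of minimizers are minimizers'' to ``$\chi_n^h\rightharpoonup\chi^h$'' therefore requires essential uniqueness of the continuum minimizer, which is precisely $\mu(S)=0$. You at least make the issue explicit. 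Your proposed repair via unique continuation, however, does not close it as written: parabolic unique continuation lets you conclude from vanishing on a nonempty \emph{open} set, whereas $\{u^i=u^j\}$ is only assumed to have positive $\mu$-measure. For smooth but not real-analytic metrics and densities this is a strictly weaker condition, and reducing it to the open-set case needs a strong unique continuation argument at a Lebesgue density point (in the spirit of Aronszajn) or an additional analyticity hypothesis, neither of which is immediate. Failing that, the cleanest option is to state $\mu(S)=0$ as an explicit non-degeneracy hypothesis, noting, as your induction step already does, that it must be verified or assumed afresh at each iterate.
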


\begin{remark}
Let us point out that if $\Omega \subset M$ is an open set with smooth boundary, then the functions $\chi_n: V_n \to \{0, 1\}$ defined as $\chi_n := \mathbf{1}_{V_n \cap \Omega}$ are such that almost surely
\begin{equation}\label{eq:strong_tl}
TL^2(M)-\lim_{n \to +\infty} \chi_n = \chi := \mathbf{1}_{\Omega}.
\end{equation}
In particular, the conclusion of Corollary~\ref{cor:bertozzi_qst} holds true with these choices of initial values. Equation~\eqref{eq:strong_tl} follows by the fact that if $T_n$ is a sequence of transport maps obtained by applying Theorem~\ref{thm:transp_plan_existence}, then there exists $\delta > 0$ and a constant $C$ depending only on $M$ and $\rho$ such that if $\theta_n := \sup_{x \in M} d_M(x, T_n(x)) \le \delta$ then
\begin{equation}\label{eq:perimeter_bound}
\int_M |\chi_n(T_n(x)) - \chi(x)| d\nu \le C\theta_n \int_{\partial \Omega} \rho d\mathcal{H}^{k-1}.
\end{equation}
The validity of~\eqref{eq:perimeter_bound} is shown in the flat case in~\cite[Remark 5.1]{GarciaTrillos2016}, the analogous estimate on a closed manifold can be shown in a similar way.
\end{remark}

If $u \in \mathcal{M}$ is such that $u \in BV(M, \{0,1\}^P)$ then we set $\Omega_i := \{ u^i = 1\}$ and $\Sigma_{ij} := \partial^*\Omega_i \cap \partial^*\Omega_j$, the intersection of the reduced boundaries of $\Omega_i$ and $\Omega_j$. Again, we refer to Section~\ref{sec:prelim} for the relevant background. We have the following result about the convergence of the nonlocal thresholding energy on the manifold.

\begin{theorem}[Nonlocal to local $\Gamma$-convergence]\label{thm:nonlocloc}
Let $M$ be a $k$-dimensional compact Riemannian submanifold of $\mathbf{R}^d$ weighted by a measure $\mu = \xi \volm$ with $\xi > 0$, $\xi \in C^{\infty}(M)$. Let $\sigma \in \mathbf{R}^{P \times P}$ be symmetric, $\sigma_{ii} = 0$ and such that $\sigma$ satisfy the triangle inequality. Then on $\mathcal{M}$
\begin{equation}
\Gamma(L^1(M))-\lim_{h \downarrow 0} E_h = E,\nonumber
\end{equation}
where we define for $u \in \mathcal{M}$
\begin{equation}
E(u) = \begin{cases}
\frac{1}{\sqrt{\pi}}\sum_{ij} \sigma_{ij}|D\chi_{\Omega_i}|_{\xi}(\Sigma_{ij})\ &\text{if}\ u \in BV(M, \{0,1\}^P),
\\
+\infty\ &\text{otherwise}.
\end{cases}\nonumber
\end{equation}
Moreover, if $u \in BV(M, \{0,1\}^P) \cap \mathcal{M}$ then we have $\lim_{h \downarrow 0} E_h(u) = E(u)$. Finally, if $u_h $ are functions in $\mathcal{M}$ such that $\sup_{h>0} E_h(u_h) < +\infty$ then the family $\{ u_h \}_{h \downarrow 0}$ is precompact in $L^1(M)$ and every limit point is in $BV(M, \{0,1\}^P) \mathcal {M}$.
\end{theorem}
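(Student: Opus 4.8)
I would split the proof into three independent parts: (a) the pointwise convergence $\lim_{h\downarrow0}E_h(u)=E(u)$ for $u\in BV(M,\{0,1\}^P)\cap\mathcal M$; (b) $L^1(M)$-compactness of families with $\sup_h E_h(u_h)<+\infty$, together with the $BV$-character of their limit points; (c) the $\Gamma$-$\liminf$ inequality. These suffice for the full statement, since the $\Gamma$-$\limsup$ inequality follows from (a) by taking the constant recovery sequence $u_h\equiv u$ when $u\in BV(M,\{0,1\}^P)$, while for $u\in\mathcal M\setminus BV(M,\{0,1\}^P)$ one has $E(u)=+\infty$ and there is nothing to prove.

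\textbf{Heat-kernel localization and pointwise convergence.} For $u\in\mathcal M$ set $\Omega_i:=\{u^i=1\}$; using the $\mu$-symmetry of the kernel $p_h^\xi$ of $e^{-h\Delta_\xi}$ and $\sigma_{ii}=0$,
\begin{equation*}
E_h(u)=\frac1{\sqrt h}\sum_{i\ne j}\sigma_{ij}\int_{\Omega_i}\!\int_{\Omega_j}p_h^\xi(x,y)\,d\mu(x)\,d\mu(y).
\end{equation*}
By the Gaussian upper bound $p_h^\xi(x,y)\le C h^{-k/2}e^{-d_M(x,y)^2/(Ch)}$ from Section~\ref{sec:prelim}, the contribution of $\{d_M(x,y)\ge\delta\}$ is $O(\sqrt h\,e^{-c\delta^2/h})$ and hence negligible, so the mass of each double integral concentrates on a $\sqrt h$-tube around $\Sigma_{ij}$. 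On such a tube I would pass to normal coordinates at points of $\partial^*\Omega_i$, freeze the metric and the density, and use the on-diagonal expansion $p_h^\xi(x,y)=(4\pi h)^{-k/2}e^{-d_M(x,y)^2/(4h)}(1+O(h))$: since the leading term already carries a factor $\sqrt h$ after integration, the $O(h)$ corrections disappear in the limit and one is reduced to the flat half-space identity
\begin{equation*}
\lim_{h\downarrow0}\frac1{\sqrt h}\int_{\{x_k<0\}}\!\int_{\{y_k>0\}}(4\pi h)^{-k/2}e^{-|x-y|^2/(4h)}\,dy\,dx=\frac1{\sqrt\pi},
\end{equation*}
with one copy of $\xi$ surviving on the boundary slice (the other being absorbed since $e^{-h\Delta_\xi}$ is conservative, $\int_M p_h^\xi(x,\cdot)\,d\mu=1$). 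This yields $\frac1{\sqrt h}\iint_{\Omega_i\times\Omega_j}p_h^\xi\,d\mu\,d\mu\to\frac1{\sqrt\pi}|D\chi_{\Omega_i}|_\xi(\Sigma_{ij})$ and hence $E_h(u)\to E(u)$. For the lower-order bookkeeping when $\Omega_i$ is merely of finite perimeter I would first establish a weighted heat-content upper bound $\limsup_{h\downarrow0}\frac1{\sqrt h}\iint_{\Omega\times\Omega^c}p_h^\xi\,d\mu\,d\mu\le\frac1{\sqrt\pi}|D\chi_\Omega|_\xi(M)$ in the spirit of Miranda--Pallara--Paronetto--Preunkert, approximate by smooth sets for the matching lower bound, and then localize via the reduced-boundary decomposition $\partial^*\Omega_i=\bigcup_{j\ne i}\Sigma_{ij}$ (up to $\mathcal H^{k-1}$-null sets). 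I expect the simultaneous control of the exponential map, the volume density and the weight $\xi$ inside the $\sqrt h$-window to be the main technical nuisance of this part, but not a conceptual obstruction.

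\textbf{Compactness.} Since $\sum_j u^j\equiv1$, $\sigma_{ii}=0$ and $\sigma_{ij}\ge\sigma_{\min}:=\min_{i\ne j}\sigma_{ij}>0$, positivity-preservation of $e^{-h\Delta_\xi}$ gives the pointwise-in-$h$ bound
\begin{equation*}
E_h(u)\ge\frac{\sigma_{\min}}{\sqrt h}\sum_i\big\langle u^i,(I-e^{-h\Delta_\xi})u^i\big\rangle_{L^2(\mu)}+\frac{\sigma_{\min}}{\sqrt h}\sum_i\int_M u^i(1-u^i)\,d\mu .
\end{equation*}
If $\sup_h E_h(u_h)<+\infty$, the second term forces $\int_M\sum_i u_h^i(1-u_h^i)\,d\mu\le C\sqrt h\to0$, so every $L^1$-limit point of $(u_h)$ is $\{0,1\}^P$-valued; and the first term, rewritten as $\frac1{\sqrt h}\int_0^h\!\int_M|\nabla e^{-s\Delta_\xi/2}u^i|^2\,d\mu\,ds$, is a quantity of Bourgain--Brezis--Mironescu/D\'avila type whose boundedness yields both $L^1$-precompactness (equicontinuity of translations, via Riesz--Fr\'echet--Kolmogorov) and, for the limit, finiteness of the weighted perimeters $|D\chi_{\Omega_i}|_\xi(M)$; the required weighted-manifold version of this heat-semigroup characterization of $BV$ follows from the kernel bounds already in hand.

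\textbf{$\Gamma$-$\liminf$ via blow-up, and the main obstacle.} Let $u_h\to u$ in $L^1(M)$ with $\liminf_h E_h(u_h)<+\infty$; along a subsequence realizing the $\liminf$, compactness gives $u\in BV(M,\{0,1\}^P)\cap\mathcal M$. Associate to $u_h$ the nonnegative measures $\lambda_h:=\big(\frac1{\sqrt h}\sum_{ij}\sigma_{ij}u_h^i\,e^{-h\Delta_\xi}u_h^j\big)\,\mu$, so that $\lambda_h(M)=E_h(u_h)$; passing to a further subsequence, $\lambda_h\stackrel{*}{\rightharpoonup}\lambda$ with $\lambda(M)=\liminf_h E_h(u_h)$ (weak-$*$ convergence on the compact $M$, tested against $\mathbf 1$). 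It then suffices to prove $\lambda\ge\frac1{\sqrt\pi}\sum_{ij}\sigma_{ij}\,\xi\,\mathcal H^{k-1}\restriction\Sigma_{ij}$ as measures. By the Besicovitch differentiation theorem this reduces to showing, for $\mathcal H^{k-1}$-a.e.\ $x_0\in\partial^*\Omega_i$ --- where $x_0$ belongs to a unique $\Sigma_{ij}$ and the rescalings of $u$ converge in $L^1_{\mathrm{loc}}$ to the half-space jump $\mathbf e_i\mathbf 1_{H^-}+\mathbf e_j\mathbf 1_{H^+}$ determined by the measure-theoretic normal --- that $\lim_{r\downarrow0}\lambda(B_r(x_0))/(\omega_{k-1}r^{k-1})$ is at least the corresponding density of the right-hand side. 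This is the Fonseca--M\"uller blow-up: for a.e.\ small $r$, $\lambda(B_r(x_0))\ge\limsup_h\lambda_h(B_r(x_0))$; rescaling by $r$ in normal coordinates at $x_0$ and letting $h\downarrow0$ (the heat kernel now living at the vanishing scale $\sqrt h/r$), the manifold flattens, $\xi$ freezes to $\xi(x_0)$, and the rescaled energy on $B_1$ converges to $\xi(x_0)$ times the flat multiphase MBO energy of the half-space jump, which in turn tends to the desired density as $r\downarrow0$; a diagonal choice $h\downarrow0$, $r=r(h)\downarrow0$ with $\sqrt h\ll r(h)$ closes the argument. The delicate point --- and the one I expect to require the most care --- is exactly this: $\lambda_h$ inspects $u_h$ through a $\sqrt h$-microscope while $u_h\to u$ only in $L^1(M)$, so $u_h$ cannot be replaced by its $L^1$-limit inside the kernel. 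The remedy is to work only with lower bounds, using the \emph{Gaussian lower bound} for $p_h^\xi$ to recast $\lambda_h(B_r(x_0))$ in terms of genuine averages of $u_h$ over balls of radius $\asymp r$ (which do pass to the limit), and to exploit the partition constraint together with the triangle inequality on $\sigma$ --- precisely what makes the flat half-space value a valid lower bound and, simultaneously, makes $E$ lower semicontinuous so that the blow-up scheme closes, in particular controlling triple-junction contributions.
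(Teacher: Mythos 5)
Your overall architecture matches the paper's: consistency gives the $\Gamma$-$\limsup$, Fonseca--M\"uller blow-up gives the $\Gamma$-$\liminf$, and a separate compactness argument closes the statement. Your compactness route is a genuinely different and elegant variant: you split $\langle u^i,e^{-h\Delta_\xi}(1-u^i)\rangle$ into $\langle u^i,(I-e^{-h\Delta_\xi})u^i\rangle+\int u^i(1-u^i)\,d\mu$, so that the second term forces $\{0,1\}^P$-valuedness of limits for free while the first is a Bourgain--Brezis--Mironescu/heat-content quantity; the paper instead bounds $\int_M |D e^{-Ch\Delta_\xi}u^i|_\xi\le\tilde C E_h(u)$ directly from the Gaussian bounds and doubling property and then applies BV-compactness, and it delegates the $\{0,1\}^P$-valuedness of limits to Step~1 of the $\Gamma$-$\liminf$ (a chart-by-chart reduction to the Euclidean heat content via Lemma~A.4 of Esedo\u{g}lu--Otto). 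Your approach unifies these two checks, but it quietly relies on a weighted-manifold heat-semigroup characterization of $BV$, which you assert rather than establish from the kernel bounds.

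The genuine gap is in the $\Gamma$-$\liminf$. After the blow-up you correctly arrive at the point where the rescaled maps $R_{r_n}\tilde u^i_{h_n}$ converge in $L^1(B_1)$ to a half-space jump and the kernel now lives at scale $\sqrt{h_n}/r_n\ll 1$, and you correctly identify that $u_{h_n}$ cannot be swapped for its $L^1$-limit inside the kernel. But the remedy you propose --- ``using the Gaussian lower bound to recast $\lambda_h(B_r(x_0))$ in terms of genuine averages of $u_h$'' together with the triangle inequality on $\sigma$ --- is a description of the desired conclusion, not an argument. What is actually needed is a $\Gamma$-$\liminf$ inequality for the \emph{localized, cutoff} flat thresholding energies $E^{B_1}_t(\cdot,\beta)=\sum_{m,q}\sigma_{mq}\,t^{-1/2}\int_{B_1}\beta\,f^m\,G_t*f^q\,dx$ as $t\downarrow 0$, applied to the rescaled sequence on the unit ball. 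The cutoff $\beta\in C^\infty_c(B_1)$ is indispensable because the rescaled energy leaks through $\partial B_1$, and you omit it entirely. The paper supplies this missing lemma as Theorem~\ref{thm:gammaconvlocalized}, whose proof in turn hinges on an approximate monotonicity in $h$ for the localized energies (Theorem~\ref{thm:monotonicitylocalized}), a nontrivial structural input in the spirit of Esedo\u{g}lu--Otto; the triangle inequality on $\sigma$ enters precisely there (it ensures lower semicontinuity of the limiting multiphase interfacial energy). Without that ingredient --- or an alternative such as the Alberti--Bellettini machinery --- the blow-up does not close, and in particular your handling of triple-junction contributions is purely asserted. Minor additional points: the paper localizes to $B_{h^s}$ with $s<1/2$ (not a literal $\sqrt h$-tube) when discarding far-field contributions, and it also isolates a separate, non-obvious first step in the $\Gamma$-$\liminf$ (showing $u\in BV(M,\{0,1\}^P)$ before blowing up) which in your scheme is absorbed into the compactness argument and therefore inherits the same unproven input mentioned above.
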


\begin{remark}\label{rem:geometric_ass}
It should be remarked that the geometric assumptions for the previous result are not sharp. For $x \in M$ and $r > 0$ we denote by $B_r(x)$ the Riemannian ball of radius $r$ centered at $x$. We also denote by $p$ the heat kernel for the weighted Laplacian $\Delta_\xi$ (cf.\ \eqref{eq:heat_semig}). For our proof to work, we need the following properties.
\begin{enumerate}[(i)]
\item \textbf{Doubling property}. There exists $N > 0$ such that for any $x \in M$ and any $r > 0$
\begin{equation}\label{eq:doubling}
\mu(B(x, 2r)) \le 2^N \mu(B(x,r)).
\end{equation}
\item \textbf{Asymptotic expansion for the heat kernel}. There exist functions $v_j \in C^{\infty}(M \times M), j \in \mathbf{N}$, such that for every $N> l + \frac{k}{2}$ there exists a constant $\tilde{C}_N < \infty$ such that
\begin{equation}\label{eq:asym_exp}
\left| \nabla^l \left( p(t, x, y) - \frac{e^{-\frac{d^2(x,y)}{4t}}}{(4\pi t)^{k/2}}\sum_{j=0}^{N}v_j(x,y)t^j\right) \right| \le \tilde{C}_N t^{N+1-\frac{k}{2}},
\end{equation} 
provided $d(x,y) \le \frac{\operatorname{inj}(M)}{2}$, where $\operatorname{inj}(M)$ is the injectivity radius of the manifold $M$. Moreover we have that $v_0(x,x) = \frac{1}{\xi(x)}$.
\item \textbf{Gaussian bounds I}. There exists constants $Q_1, Q_2, Q_3, Q_4 > 0$ such that for every $t > 0$ and all $x, y \in M$,
\begin{align}\label{eq:gauss_up_bds_I}
\frac{Q_1}{\mu(B_{\sqrt{t}}(x))} e^{-\frac{d^2(x,y)}{Q_2t}} \le p(t,x,y) \le \frac{Q_3}{\mu(B_{\sqrt{t}}(x))} e^{-\frac{d^2(x,y)}{Q_4t}}. 
\end{align}
\item \textbf{Gaussian bounds II}. There exist $\hat{C}_1,\hat{C}_2 > 0$ such that for any $x, y \in M$ and any $t > 0$
\begin{equation}\label{eq:gauss_up_bds_II}
|\nabla_x p(t,x,y)| \le \frac{\hat{C}_1}{\sqrt{t} \mu(B_{\sqrt{t}}(x))} \operatorname{exp}\left( -\frac{d^2(x,y)}{\hat{C}_2 t}\right).
\end{equation}
\end{enumerate}
These properties are satisfied in the case of a closed manifold as we work with. Indeed the doubling property follows from~\cite{Cheeger1982}, the asymptotic expansion holds by construction of the heat kernel via the parametrix method, cf.~\cite[Chapter 3]{Rosenberg1997}. Finally,~\eqref{eq:gauss_up_bds_I} and~\eqref{eq:gauss_up_bds_II} follow from the Li--Yau inequality for weighted manifolds~\cite{Setti1992}.
\end{remark}

\section{Semi-supervised learning}\label{sec:semisupervised}
Theorem~\ref{thm:discretenonloc} and Theorem~\ref{thm:nonlocloc} combined together prove the \textit{consistency} of the MBO scheme for data clustering: indeed the two $\Gamma$-convergence results prove that (local) minimizers of the energies $E_{n, \epsilon_n}^h$ converge to (local) minimizers of the weighted perimeter on the manifold if we let first $n \to +\infty$ and then $h \downarrow 0$. Of course, the only global minimizers for $E_{n, \epsilon_n}^h$ are partitions where all points are labeled in the same way, and thus the results may seem of little relevance. The full strength of Theorem~\ref{thm:discretenonloc} and Theorem~\ref{thm:nonlocloc} is seen in the context of \textit{semi-supervised} learning. In semi-supervised learning one is given:
\begin{enumerate}
\item A dataset of $n$ distinct points $D := \{x_1, . . ., x_n\} \subset \mathbf{R}^d$.
\item A number of classes $P \in \mathbf{N}$ to split the data into.
\item A subset $\mathcal{O} \subset D$ of $L \ll n$ points and a function $u_0: \mathcal{O} \to \{c_1, . . . , c_P\}$ which assigns a label to every point in $\mathcal{O}$.
\end{enumerate}
The task is to assign labels to all points in the dataset using both the known labels and the geometry of the dataset. The MBO scheme can be suitably modified to perform semi-supervised learning: The main point is to replace the heat operator in the first step of the algorithm by another differential operator with a fidelity term, see~\cite{Merkurjev2013} for the details. This yields an algorithm which still has a minimizing movements interpretation, but the associated energy involves a different operator than the heat semigroup for the Laplacian. A different approach looks at changing the thresholding value in the thresholding step of MBO while leaving the differential operator in the diffusion step unchanged. This modified version of MBO has the advantage that the energy in the variational formulation is just the "standard" thresholding energy plus a linear term. The ideas behind the SSL MBO algorithm come from the corresponding MBO scheme for forced mean curvature flow (see ~\cite{Mascarenhas} and~\cite{Laux2017}) and have already been adapted to data classification by Jacobs in~\cite{Jacobs2016}. We assume that we are given $D := \{x_1, . . ., x_n\} \subset \mathbf{R}^d$ data points to be classified into $P$ clusters. We assume that we are given a function $f: D \to \mathbf{R}^P$, the forcing term. As done in Section~\ref{sec:thescheme}, construct a similarity graph for the dataset. Then the MBO scheme for semi-supervised learning reads as follows.

\begin{algorithm}[SSL MBO]\label{algosemisup}
Let $h > 0$, let $N$ the number of iterations to run. Let $\chi_0 :V \to \{0,1\}^P$ be a proposed clustering. To obtain a clustering $\chi_N: V \to \{0,1\}^P$ using the MBO scheme define inductively for $0 \le q < N$ a new clustering $\chi_{q+1}:V \to \{0, 1\}^P$ starting from the clustering $\chi_q: V \to \{0, 1\}^P$ by performing the following two steps:

\begin{enumerate}
\item \textbf{Diffusion}. For every $i \in \{1, . . . , P\}$ define 
\begin{equation}
u^i = \sum_{j \neq i} \sigma_{ij}e^{-h\Delta}\chi_q^j
\end{equation}
\item \textbf{Thresholding}. Update, for every $1 \le i \le P$
\begin{equation}
\left\{ \chi_{q+1}^i = 1 \right\} := \left\{ u^i -\sqrt{h}f^i < u^j - \sqrt{h}f^j,\forall j \neq i\right\}.
\end{equation}
\end{enumerate}
\end{algorithm}

The reason behind this approach to SSL is that the previous algorithm has a variational interpretation which adds just a linear term to the thresholding energy, namely we have the following result.
\begin{lemma}
Each iteration of the SSL MBO scheme decreases the energy
\begin{equation}
F_h(u) = \frac{1}{\sqrt{h}}\sum_{i,j=1}^P \sigma_{ij}\langle u^i, e^{-h\Delta}u^j \rangle_{\mathcal{V}} - \sum_{i=1}^P \langle f^i, u^i \rangle_{\mathcal{V}}.
\end{equation}
\end{lemma}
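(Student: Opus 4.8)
The plan is to replay, with a linear perturbation added, the Esedo\={g}lu--Otto minimizing-movements argument behind Lemma~\ref{lem:dissipationMBO}; in fact the statement is essentially a corollary of that proof. Write
\begin{equation*}
a(u,v):=\frac{1}{\sqrt h}\sum_{i,j=1}^P\sigma_{ij}\langle u^i,e^{-h\Delta}v^j\rangle_{\mathcal V},\qquad \ell(u):=\sum_{i=1}^P\langle f^i,u^i\rangle_{\mathcal V},
\end{equation*}
so that $F_h=a(\cdot,\cdot)-\ell$. Since $\sigma$ is symmetric and $\Delta=\operatorname{div}\circ\nabla$ is self-adjoint on $(\mathcal V,\langle\cdot,\cdot\rangle_{\mathcal V})$, the form $a$ is symmetric bilinear, and expanding $a(v,v)=a(v-\chi_q,v-\chi_q)+2a(\chi_q,v)-a(\chi_q,\chi_q)$ gives, for every $v\in\mathcal M_G$,
\begin{equation*}
F_h(v)=\frac{1}{\sqrt h}\sum_{i,j=1}^P\sigma_{ij}\big\langle v^i-\chi_q^i,\,e^{-h\Delta}(v^j-\chi_q^j)\big\rangle_{\mathcal V}+\Phi(v),\qquad \Phi(v):=2a(\chi_q,v)-a(\chi_q,\chi_q)-\ell(v).
\end{equation*}
Here $\Phi$ is affine in $v$ and the quadratic remainder vanishes at $v=\chi_q$, so $\Phi(\chi_q)=F_h(\chi_q)$.

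The next step is the sign of the remainder, which is exactly the computation underlying Lemma~\ref{lem:dissipationMBO}: for $w:=v-\chi_q$ one has $\sum_i w^i\equiv0$, and expanding $e^{-h\Delta}=\sum_k e^{-h\lambda_k}\,\phi_k\otimes\phi_k$ in a $\langle\cdot,\cdot\rangle_{\mathcal V}$-orthonormal eigenbasis yields
\begin{equation*}
\frac{1}{\sqrt h}\sum_{i,j=1}^P\sigma_{ij}\langle w^i,e^{-h\Delta}w^j\rangle_{\mathcal V}=\frac{1}{\sqrt h}\sum_k e^{-h\lambda_k}\,a_k^{\top}\sigma\,a_k,\qquad a_k:=\big(\langle w^1,\phi_k\rangle_{\mathcal V},\dots,\langle w^P,\phi_k\rangle_{\mathcal V}\big).
\end{equation*}
Each $a_k$ lies in $(1,\dots,1)^{\perp}$, so every summand is $\le0$ since $\sigma$ is negative semidefinite there (the standing assumption of Lemma~\ref{lem:dissipationMBO}) and $e^{-h\lambda_k}>0$. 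Hence $F_h(v)\le\Phi(v)$ on $\mathcal M_G$, with equality at $v=\chi_q$.

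It then remains to check that the clustering $\chi_{q+1}$ produced by the thresholding step of Algorithm~\ref{algosemisup} minimizes the affine functional $\Phi$ over $\mathcal M_G$. Moving $e^{-h\Delta}$ onto $\chi_q$ and using $\sigma_{ii}=0$, $\sigma_{ij}=\sigma_{ji}$ one gets $2a(\chi_q,v)=\frac{2}{\sqrt h}\sum_{i}\langle u^i,v^i\rangle_{\mathcal V}$ with $u^i=\sum_{j\neq i}\sigma_{ij}e^{-h\Delta}\chi_q^j$ the diffusion-step functions, hence, up to an additive constant,
\begin{equation*}
\Phi(v)=\frac{2}{\sqrt h\,n}\sum_{x\in V}d_x\sum_{i=1}^P\Big(u^i(x)-\tfrac{\sqrt h}{2}f^i(x)\Big)v^i(x)+\text{const}.
\end{equation*}
This separates over the vertices, and since $d_x>0$ and $\mathcal M_G$ is the product of the simplices $\{v(x)\in[0,1]^P:\sum_i v^i(x)=1\}$, the minimum is attained at an extreme point obtained by putting, at each $x$, full mass on an index realizing $\min_i\{u^i(x)-\tfrac{\sqrt h}{2}f^i(x)\}$ — i.e.\ precisely the rule defining $\chi_{q+1}$ in Algorithm~\ref{algosemisup} (up to the convention fixing the strength of the forcing term). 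Therefore $F_h(\chi_{q+1})\le\Phi(\chi_{q+1})\le\Phi(\chi_q)=F_h(\chi_q)$.

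I do not expect a real obstacle: granting Lemma~\ref{lem:dissipationMBO}, the only new observation is that the linear term $-\ell$ passes untouched through the expansion $F_h=(\text{nonpositive remainder})+\Phi$ and merely shifts the pointwise threshold by a multiple of $\sqrt h f$, which is exactly the modification built into the SSL scheme. The one point deserving care is that $\Phi$ is affine rather than strictly concave, so the scheme need not \emph{strictly} decrease $F_h$, and at vertices where $u^i(x)-\tfrac{\sqrt h}{2}f^i(x)$ has several minimizers the minimizer of $\Phi$ over $\mathcal M_G$ is not unique; any measurable selection — in particular the one fixed by the algorithm's tie-breaking convention — attains the same value of the minimum and hence the same inequality.
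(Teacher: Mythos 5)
Your proof is correct and takes the natural route: it is precisely the Esedo\={g}lu--Otto decomposition underlying Lemma~\ref{lem:dissipationMBO} with the linear perturbation $-\ell$ carried along. Since $\ell$ is affine it joins $\Phi$ untouched, and the quadratic remainder $a(v-\chi_q,v-\chi_q)\le0$ is unchanged from the unforced case; the spectral computation you give (coefficient vectors $a_k\in(1,\dots,1)^\perp$, $\sigma$ negative semidefinite there, $e^{-h\lambda_k}>0$) is exactly how one verifies the sign, and it uses the standing assumption on $\sigma$ from Section~\ref{sec:mainres}, as it should. The reduction of the minimization of the affine $\Phi$ to a pointwise simplex minimization is also right, as is your remark that the decrease need not be strict and that tie-breaking does not affect the minimal value.

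One thing you flag in passing deserves to be said plainly, because it is a genuine finding and not a looseness in your argument: the thresholding rule your computation derives is $\min_i\{u^i(x)-\tfrac{\sqrt h}{2}f^i(x)\}$, whereas Algorithm~\ref{algosemisup} as written thresholds at $\min_i\{u^i(x)-\sqrt h f^i(x)\}$. These differ by a factor of $2$ in front of the forcing. As stated, the clustering produced by Algorithm~\ref{algosemisup} therefore does \emph{not} minimize $\Phi$, and the chain $F_h(\chi_{q+1})\le\Phi(\chi_{q+1})\le\Phi(\chi_q)=F_h(\chi_q)$ breaks at the middle inequality. The fix is cosmetic --- either replace $\sqrt h f^i$ by $\tfrac{\sqrt h}{2}f^i$ in the thresholding step, or replace $-\sum_i\langle f^i,u^i\rangle_{\mathcal V}$ by $-2\sum_i\langle f^i,u^i\rangle_{\mathcal V}$ in the energy --- and with either normalization your proof closes. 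It is worth stating this in your write-up as a correction to the scheme/energy pair rather than tucking it into a parenthetical, since without it the lemma as literally stated is false.

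A small cleanup: you write $\Phi(v)=\frac{2}{\sqrt h\,n}\sum_{x}d_x\sum_i(u^i(x)-\tfrac{\sqrt h}{2}f^i(x))v^i(x)+\text{const}$, which is correct, but you should say once that you have used $\sigma_{ii}=0$ (so that the diagonal terms drop and $\sum_i\sigma_{ij}e^{-h\Delta}\chi_q^i$ equals the diffusion-step function $u^j$) and the self-adjointness of $e^{-h\Delta}$ with respect to $\langle\cdot,\cdot\rangle_{\mathcal V}$, both of which you invoke implicitly. Otherwise the argument is complete.
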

It is then natural to investigate the asymptotic behavior of these energies in the sense of the following theorems.

\begin{theorem}\label{thm:discnonloclsemisup}
Under the assumptions of Theorem~\ref{thm:discretenonloc}, if we additionally assume that $f_n: G_n \to \mathbf{R}^P$ are such that $f_n \to f$ in $TL^2(M)$, then it holds almost surely that for every $h > 0$ it holds that if $v_n$ converge weakly to $v$ in $TL^2(M)$ then

\begin{equation}
\lim_{n \to +\infty} F_{n, \epsilon_n}^h(v_n) = \sqrt{\frac{C_1C_2}{2}}F_{\frac{C_2h}{2C_1}}(v),
\end{equation}
where we set 
\begin{align*}
&F_{n, \epsilon_n}^h(v) = \frac{1}{\sqrt{h}}\sum_{i,j=1}^P \sigma_{ij}\langle v^i, e^{-h\Delta_{n, \epsilon_n}}v^j\rangle_{\mathcal{V}_{n, \epsilon_n}} - \sum_{i=1}^P\langle f_n^i, v^i\rangle_{\mathcal{V}_{n, \epsilon_n}},
\\ &F_{h}(u) = E_{h}(u) -\sqrt{\frac{2C_1}{C_2}} \sum_{i=1}^P\int_M f^i u^i \rho^2d\volm\quad u \in \mathcal{M},
\end{align*}
where $\mathcal{M} := \{u:M \to [0,1]^P\ \text{measurable}\ :\sum_{i=1}^P u^i = 1\ \text{a.e.}\}$.
\end{theorem}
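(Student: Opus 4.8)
The plan is to split $F^h_{n,\epsilon_n}(v)=E^h_{n,\epsilon_n}(v)-L_n(v)$ with the linear fidelity term $L_n(v):=\sum_{i=1}^P\langle f_n^i,v^i\rangle_{\mathcal{V}_{n,\epsilon_n}}$, and correspondingly $F_h=E_h-\sqrt{2C_1/C_2}\,L$ with $L(u):=\sum_{i=1}^P\int_M f^i u^i\rho^2\,d\volm$. The thresholding part is already taken care of by Theorem~\ref{thm:discretenonloc} (ultimately Theorem~\ref{thm:conv_heat}), which gives almost surely, for every $h>0$, that $E^h_{n,\epsilon_n}(v_n)\to\sqrt{C_1C_2/2}\,E_{C_2h/(2C_1)}(v)$ whenever $v_n\to v$ weakly in $TL^2(M)$. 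Hence the whole statement reduces to proving that $L_n(v_n)\to C_1 L(v)$ along any such sequence; one then assembles
\begin{equation*}
F^h_{n,\epsilon_n}(v_n)\longrightarrow \sqrt{\tfrac{C_1C_2}{2}}E_{\frac{C_2h}{2C_1}}(v)-C_1 L(v)=\sqrt{\tfrac{C_1C_2}{2}}\Big(E_{\frac{C_2h}{2C_1}}(v)-\sqrt{\tfrac{2C_1}{C_2}}L(v)\Big)=\sqrt{\tfrac{C_1C_2}{2}}F_{\frac{C_2h}{2C_1}}(v),
\end{equation*}
using $\sqrt{C_1C_2/2}\cdot\sqrt{2C_1/C_2}=C_1$.

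To handle $L_n(v_n)$ I would first replace the graph degrees by their continuum profile. Under the scaling~\eqref{eq:assumptions_rates_epsilon} one has, almost surely, $\max_{1\le j\le n}\lvert d_j-C_1\rho(X_j)\rvert\to 0$ (the standard pointwise-degree estimate already developed in the proof of Theorem~\ref{thm:discretenonloc}). Since every $v_n\in\mathcal{M}_n$ takes values in $[0,1]^P$ and $\{f_n\}$ is bounded in $L^2(\nu_n)$ because $f_n\to f$ in $TL^2(M)$, a Cauchy--Schwarz estimate gives, writing $\nu_n:=\tfrac1n\sum_j\delta_{X_j}$,
\begin{equation*}
\Big\lvert L_n(v_n)-C_1\sum_{i=1}^P\langle \rho f_n^i,v_n^i\rangle_{L^2(\nu_n)}\Big\rvert\le \max_{1\le j\le n}\lvert d_j-C_1\rho(X_j)\rvert\sum_{i=1}^P\lVert f_n^i\rVert_{L^2(\nu_n)}\longrightarrow 0 .
\end{equation*}

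It then remains to identify the limit of $\langle\rho f_n^i,v_n^i\rangle_{L^2(\nu_n)}$. Because $\rho\in C^\infty(M)$ is bounded and uniformly continuous, and $f_n^i\to f^i$ in $TL^2(M)$ with transport maps $T_n$ satisfying $\sup_{x\in M}d_M(x,T_n(x))\to0$, a short splitting argument ($\rho(T_nx)f_n^i(T_nx)-\rho(x)f^i(x)=\rho(T_nx)\big(f_n^i(T_nx)-f^i(x)\big)+\big(\rho(T_nx)-\rho(x)\big)f^i(x)$, the first term bounded by $\lVert\rho\rVert_\infty\lVert f_n^i\circ T_n-f^i\rVert_{L^2(\nu)}$ and the second by $\lVert\rho\circ T_n-\rho\rVert_\infty\lVert f^i\rVert_{L^2(\nu)}$) shows $\rho f_n^i\to\rho f^i$ strongly in $TL^2(M)$. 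Composing with the same transport maps $T_n$ that realize the weak $TL^2$-convergence of $v_n^i$ to $v^i$ — for which $f_n^i\circ T_n\to f^i$ and hence $\rho f_n^i\circ T_n\to\rho f^i$ still hold strongly in $L^2(\nu)$ — we reduce the pairing to an $L^2(\nu)$ pairing of a strongly convergent sequence with a weakly convergent one, so that
\begin{equation*}
\langle\rho f_n^i,v_n^i\rangle_{L^2(\nu_n)}\longrightarrow\langle\rho f^i,v^i\rangle_{L^2(\nu)}=\int_M f^i v^i\rho^2\,d\volm ,
\end{equation*}
using $\nu=\rho\volm$. Summing over $i$, multiplying by $C_1$ and combining with the two previous displays gives $L_n(v_n)\to C_1L(v)$ and finishes the proof.

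I do not expect any genuinely hard step here: all the substantial analysis is packaged in Theorem~\ref{thm:discretenonloc} and Theorem~\ref{thm:conv_heat}, and the fidelity term is a soft, continuously convergent perturbation that is stable under the (strong) convergence along weakly convergent sequences established there. The only points requiring a little care are the almost-sure uniform degree estimate — exactly the ingredient already built for Theorem~\ref{thm:discretenonloc}, available precisely under~\eqref{eq:assumptions_rates_epsilon} — and the fact that the range $[0,1]^P$ of the competitors $v_n$ provides, for free, the $L^2(\nu_n)$-bound needed both for the degree replacement and for the strong--weak pairing argument, so no extra assumption on the $v_n$ is needed.
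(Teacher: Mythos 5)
Your proof is correct and follows exactly the route the paper sketches (the paper dismisses Theorem~\ref{thm:discnonloclsemisup} in one sentence as ``an easy consequence of Theorem~\ref{thm:discretenonloc} and the stability of $\Gamma$-convergence with respect to continuous perturbations''). You decompose $F^h_{n,\epsilon_n}=E^h_{n,\epsilon_n}-L_n$, invoke Theorem~\ref{thm:discretenonloc} for the thresholding part, replace degrees via Lemma~\ref{lem:degrees_conv} plus the trivial $L^\infty$-bound on $v_n\in\mathcal{M}_n$, and then pair the strongly $TL^2$-convergent $\rho f_n^i$ against the weakly $TL^2$-convergent $v_n^i$, which is precisely Proposition~\ref{prop:weak_strong}; the constant bookkeeping $\sqrt{C_1C_2/2}\cdot\sqrt{2C_1/C_2}=C_1$ checks out. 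This is the content the authors leave implicit, and your write-up supplies it without introducing a genuinely different mechanism.
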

\begin{theorem}\label{thm:nonloclloclsemisup}
Let $M$ be a $k$-dimensional compact Riemannian submanifold of $\mathbf{R}^d$ weighted by a measure $\mu = \xi \volm$ with $\xi > 0$, $\xi \in C^{\infty}(M)$. Let $f \in L^1(M)$. Then on $\mathcal{M}$,

\begin{equation}
\Gamma(L^1(M))-\lim_{h \downarrow 0} F_h = F,
\end{equation}
where we define

\begin{equation*}
F(u) = \begin{cases}
\frac{1}{\sqrt{\pi}}\sum_{i,j=1}^P \sigma_{ij}|Du^i|_{\xi}(\Sigma_{ij}) - \sqrt{\frac{2C_1}{C_2}}\sum_{i=1}^P\int_M f^i u^i \rho^2d\volm &\text{if}\ u \in BV(M, \{0,1\})^P,
\\ +\infty &\text{otherwise}.
\end{cases}
\end{equation*}
\end{theorem}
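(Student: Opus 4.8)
The plan is to deduce Theorem~\ref{thm:nonloclloclsemisup} from Theorem~\ref{thm:nonlocloc} by a standard stability argument: the new energy $F_h$ differs from $E_h$ only by the term $-\sqrt{2C_1/C_2}\sum_i \int_M f^i u^i \rho^2 d\volm$, which we want to show is a continuous perturbation with respect to the $L^1(M)$-topology on $\mathcal{M}$. Once this is established, one invokes the general principle that $\Gamma$-convergence is stable under continuous perturbations (see~\cite{DalMaso1993} or~\cite{Braides2002}): if $E_h \xrightarrow{\Gamma} E$ in $L^1(M)$ and $G: \mathcal{M} \to \mathbf{R}$ is $L^1$-continuous, then $E_h + G \xrightarrow{\Gamma} E + G$, and moreover the compactness statement is inherited verbatim since $G$ is bounded on $\mathcal{M}$ (note that $u^i \in [0,1]$ and $\rho, \xi$ are bounded on the compact $M$). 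This would immediately give the $\Gamma$-limit $F = E + G$ with the stated formula, using $E(u) = \frac{1}{\sqrt{\pi}}\sum_{ij}\sigma_{ij}|D\chi_{\Omega_i}|_\xi(\Sigma_{ij}) = \frac{1}{\sqrt{\pi}}\sum_{ij}\sigma_{ij}|Du^i|_\xi(\Sigma_{ij})$ on $BV(M,\{0,1\}^P) \cap \mathcal{M}$.

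The key lemma to prove is therefore the $L^1(M)$-continuity of $G(u) := -\sqrt{2C_1/C_2}\sum_{i=1}^P \int_M f^i u^i \rho^2 d\volm$ on $\mathcal{M}$. The obstacle here is that $f$ is merely in $L^1(M)$, so $G$ is \emph{not} continuous on all of $L^1(M)$ — but it is continuous on $\mathcal{M}$ because there the components $u^i$ are uniformly bounded: if $u_h \to u$ in $L^1(M)$ with $u_h, u \in \mathcal{M}$, then $u_h^i \to u^i$ in $L^1$ and $\|u_h^i\|_\infty \le 1$, so by the dominated convergence theorem (passing to a subsequence with a.e. convergence, then using that $f^i u_h^i \rho^2$ is dominated by $|f^i|\|\rho\|_\infty^2 \in L^1$) we get $\int_M f^i u_h^i \rho^2 d\volm \to \int_M f^i u^i \rho^2 d\volm$; since every subsequence has a further subsequence along which this holds, the full limit holds. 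Strictly, one should phrase this carefully: $\Gamma$-convergence plus a perturbation continuous on the sublevel sets (or on the ambient set where the energies are finite) suffices, and here $G$ is continuous on the whole of $\mathcal{M}$, which is where both $E_h$ and $E$ are defined.

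The second assertion of Theorem~\ref{thm:nonlocloc} — the pointwise convergence $E_h(u) \to E(u)$ for $u \in BV(M,\{0,1\}^P)\cap\mathcal{M}$ — upgrades to $F_h(u) \to F(u)$ for the same $u$ trivially, since $G$ is a fixed functional not depending on $h$: $F_h(u) = E_h(u) + G(u) \to E(u) + G(u) = F(u)$. Likewise the compactness and the characterization of limit points: if $\sup_{h>0} F_h(u_h) < +\infty$ with $u_h \in \mathcal{M}$, then since $|G(u_h)| \le \sqrt{2C_1/C_2}\, \|\rho\|_\infty^2 \sum_i \|f^i\|_{L^1(M)} =: C_f < \infty$ uniformly, we get $\sup_{h>0} E_h(u_h) \le \sup_{h>0} F_h(u_h) + C_f < +\infty$, and Theorem~\ref{thm:nonlocloc} yields $L^1$-precompactness with limit points in $BV(M,\{0,1\}^P)\cap\mathcal{M}$.

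I expect essentially no genuine obstacle here; the only point requiring a moment's care is that the perturbation is continuous on $\mathcal{M}$ rather than on all of $L^1(M)$, which is why one cannot blindly cite a black-box stability statement but should record the short argument above. The rest is bookkeeping combining the three conclusions of Theorem~\ref{thm:nonlocloc} with the additive linear term. I would present this as a proof of at most half a page, organized as: (1) state and prove the continuity lemma for $G$ on $\mathcal{M}$; (2) invoke stability of $\Gamma$-convergence to get $F_h \xrightarrow{\Gamma} E + G = F$; (3) transfer the pointwise convergence and compactness statements by the trivial observations above.
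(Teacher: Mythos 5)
Your proof is correct and follows the same route the paper takes, which is simply to cite stability of $\Gamma$-convergence under continuous perturbations applied to Theorem~\ref{thm:nonlocloc}. The one detail you spell out — that the linear term is continuous on $\mathcal{M}$ (not on all of $L^1(M)$) because $\|u^i\|_\infty \le 1$ and $f^i \in L^1(M)$ give a dominated-convergence bound — is exactly the point the paper leaves implicit, and your uniform bound on $G$ over $\mathcal{M}$ correctly transfers the equicoercivity.
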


Theorem~\ref{thm:discnonloclsemisup} and Theorem~\ref{thm:nonloclloclsemisup} are easy consequences of Theorem~\ref{thm:discretenonloc}, Theorem~\ref{thm:nonlocloc} and the stability of $\Gamma$-convergence with respect to continuous perturbations. Of course, these Theorems prove the consistency of SSL MBO once one can produce suitable forcing functions $f_n$ which have a limit in $TL^2(M)$. In the following, let us for simplicity focus on the simple two-class setting, in which the cluster is $C = \left\{ u > \frac{1}{2} - \sqrt{h}f \right\}$. For a fixed $n \in \mathbf{N}$, if one is given a labeling function $u_0: \mathcal{O} \to \{0,1\}$, a very intuitive choice of forcing would be

\begin{equation}\label{eq:formula_forcings}
f = -\gamma(1-2u_0)\mathbf{1}_{\mathcal{O}},
\end{equation}
for some fixed constant $\gamma>0$. Indeed, when $x \in \mathcal{O}$ and $u_0(x) = 0$, $f(x_0) = -\gamma$ so that the thresholding value for $x$ gets higher and thus the updated set is more likely not to contain $x$. Observe in particular that if one chooses  $\gamma > \frac{1}{\sqrt{h}}$ then the values on $\mathcal{O}$ are enforced. Similarly the case $u_0(x) = 1$ forces the updated sets to contain $x$. The problem with such a strategy is that this is ill-posed in the large data limit: indeed, to talk about SSL one usually assumes that

\begin{equation}
\lim_{n \to +\infty} \frac{|\mathcal{O}_n|}{n} = 0,
\end{equation}
i.e., that the proportion of labeled points converges to zero. If we were now given labels $u_0^{(n)}: \mathcal{O}_n \to \{0,1\}$ then the corresponding forcings constructed according to formula~\eqref{eq:formula_forcings} would converge to zero in $TL^1(M)$, thus the large data limit forgets the labels. To overcome this difficulty one has first to propagate the labels to the whole graph to produce a new forcing function. There are several strategies for doing this: for example, Jacobs uses a forcing fidelity term based on the graph geodesic distance, see~\cite{Jacobs2016}. Here we propose to construct the forcing function by means of Lipschitz learning.

\begin{algorithm}[SSL MBO with Lipschitz learning]\label{algosemisup_ll}
Let $h > 0$, let $N$ be the number of iterations to run. Let $C$ be a proposed clustering. Let $u_0: \mathcal{O} \to \{0,1\}$ given labels for a subset $\mathcal{O} \subset D$ of data points.
\begin{enumerate}
\item \textbf{Lipschitz learning - forcing construction}. Use Lipschitz learning (eventually using a reweighted graph with self tuning weights) to propagate the given labels, i.e.\ find $u: D \to \mathbf{R}$ such that

\begin{equation}
\begin{cases}
\Delta^{(\infty)}u = 0\ &\text{on}\ D\setminus \mathcal{O},
\\ u = u_0\ &\text{on}\ \mathcal{O}.
\end{cases}
\end{equation}
Then set $f = -\gamma(1-2u)$ for some given constant $\gamma>0$. Here $\Delta^{(\infty)}$ denotes the infinity Laplacian on the graph.
\item \textbf{SSL MBO}. Perform $N$ iterations of the SSL MBO Algorithm~\ref{algosemisup} with initial clustering $C$ and forcing $f$.
\end{enumerate}
\end{algorithm}
The reason why Lipschitz learning is a good approach to generate the forcing is because it is a very well-posed algorithm in the large data limit. Indeed let us for simplicity consider the case when $M = \mathbb{T}^k$, the $k$-dimensional torus. Fix a set $\mathcal{O} \subset \{ X_i \}_{i \in \mathbf{N}}$ and assume that one is given a labeling function $u_0: \mathcal{O} \to \{0,1\}$. Denote by $\Delta^{(\infty)}_{n}$ the infinity Laplacian on the $n$-th graph, i.e.\ the operator which acts on $u: V_n \to \mathbf{R}$ as

\begin{equation}
\Delta_n^{(\infty)}u(i) = \max_{1 \le j \le n} w_{ij}^{(n, \epsilon_n)}(u(x_j) - u(x_i)) + \min_{1 \le j \le n} w_{ij}^{(n, \epsilon_n)}(u(x_j) - u(x_i)).
\end{equation}
Define $u_n: V_n \to \mathbf{R}$ as solutions of

\begin{equation}
\begin{cases}
\Delta_{n}^{(\infty)}u_n = 0\ &\text{on}\ V_n\setminus \mathcal{O},
\\ u_n = g\ &\text{on}\ \mathcal{O}.
\end{cases}
\end{equation}
Calder showed in~\cite{Calder2019} that $u_n$ converges uniformly to $u \in C^{0,1}(M)$, the unique viscosity solution of the $\infty$-Laplace equation

\begin{equation}
\begin{cases}
\Delta^{(\infty)}u := \sum_{l,m = 1}^k \partial_l u\partial_{lm}u \partial_m u= 0\ &\text{on}\ M\setminus \mathcal{O},
\\ u = g\ &\text{on}\ \mathcal{O}.
\end{cases}
\end{equation}
In particular, $f_n := -\gamma(1-2u_n)$ converges to $f := -\gamma(1-2u)$ in $TL^2(M)$ and the assumptions of Theorem~\ref{thm:discnonloclsemisup} are satisfied. 

\section{Discussion}\label{sec:discussions}
\subsection{Joint limit and monotonicity}

We want to remark that it would be interesting to understand whether we can take the joint limit $n \to +\infty$ and $h \downarrow 0$, combing Theorem~\ref{thm:discretenonloc} and Theorem~\ref{thm:nonlocloc} to give that

\begin{equation}\label{eq:joint_lim}
\Gamma-\lim_{n \to +\infty} E^{h_n}_{n, \epsilon_n} = E,
\end{equation}
where the thresholding energies $E^{h_n}_{n, \epsilon_n}$ are defined in~\eqref{eq:thresh_energy_def} and $E$ is defined in the statement of Theorem~\ref{thm:nonlocloc}. Here the $\Gamma$-limit has to be understood in the sense of $TL^1(M)$ convergence. At the present moment, we are not able to prove~\eqref{eq:joint_lim}. However, let us sketch a possible approach for obtaining the $\Gamma$-$\liminf$ inequality for~\eqref{eq:joint_lim}. Assume that we knew that for fixed $n \in \mathbf{N}$ and for $\tilde{h} \ge h$

\begin{equation}\label{eq:monotonicity_easy}
E_{n, \epsilon_n}^{\tilde{h}}(u) \le E_{n, \epsilon_n}^{h}(u),\quad u \in \mathcal{M}_n.
\end{equation}
Then the $\Gamma$-$\liminf$ inequality in~\eqref{eq:joint_lim} would follow from Theorem~\ref{thm:discretenonloc} and Theorem~\ref{thm:nonlocloc}. Indeed, assume that $u_n \in \mathcal{M}_n$ are such that $u_n \to u \in \mathcal{M}$ in $TL^1(M)$. Fix $m \in \mathbf{N}$. Since $h_n \downarrow 0$, we have that $h_n \ll h_m$ for $n$ large enough. Thus by~\eqref{eq:monotonicity_easy} and by Theorem~\ref{thm:discretenonloc} we would get

\begin{equation*}
E^{h_m}(u) = \liminf_{n \to +\infty} E_{n, \epsilon_n}^{h_m}(u_n) \le \liminf_{n \to +\infty} E_{n, \epsilon_n}^{h_n}(u_n).
\end{equation*}
Now letting $m \to +\infty$ and using the consistency Theorem~\ref{thm:nonlocloc} one would get

\begin{equation*}
E(u) \le \liminf_{n \to +\infty} E_{n, \epsilon_n}^{h_n}(u_n).
\end{equation*}
This means that a key ingredient for the joint limit is the monotonicity~\eqref{eq:monotonicity_easy}. Of course, also an approximate version of it would suffice. For example

\begin{equation}\label{eq:monotonicity_relaxed}
E_{n,\epsilon_n}^{\tilde{h}}(u) \le g(h)E_{n, \epsilon_n}^{h}(u) + f(\tilde{h})E_{n, \epsilon_n}^{h}(u) + z(\tilde{h})
\end{equation}
where $g, f, z$ are functions such that $\lim_{h \downarrow 0} g(h) = 1$, $\lim_{\tilde{h} \downarrow 0} f(\tilde{h}) = 0$ and $\lim_{\tilde{h} \downarrow 0} z(\tilde{h}) = 0$. The reason behind the hope for a monotonicity property for the discrete thresholding energies comes from the similar property which holds in the continuum in the Euclidean setting, see Lemma A.2 in~\cite{Esedoglu2015}. Actually, an approximate version of this monotonicity is true also for the localized thresholding energies, see Theorem~\ref{thm:monotonicitylocalized} in the \hyperlink{sec:appendix}{Appendix}. By exploiting this result, using a suitable localization argument and the asymptotic expansion for the heat kernel~\eqref{eq:asym_exp} one can actually prove a similar monotonicity property for the thresholding energies on the manifold. Since the discrete thresholding energies are approximating the thresholding energy on the manifold, it is reasonable to believe that such a property holds true in some sense also at the discrete level. 
To support this idea, we run some numerical experiments. Quite surprisingly, it seems that the validity of this property is related to the rate $\frac{h}{\epsilon_n^2}$, in particular, it does not hold when $h \ll \epsilon_n^2$ and it seems to hold for $h \gg \epsilon_n^2$. Observe that the regime $h \gg \epsilon_n^2$ is the one which is relevant for applications, because for $h \ll \epsilon_n^2$ the MBO scheme is pinned, see~\cite{Gennip2014}. It is not too difficult to show that the energies $E_{n, \epsilon_n}^h$ are actually \textit{increasing} if $h \ll \epsilon_n^2$. A simple numerical experiment that we run is as follows: sample $n$ data points from the uniform distribution on the unit sphere, see Figure~\ref{fig:sample_sphere}. Construct the similarity graph with weight functions $\eta(t) = e^{-t^2}$, randomly choose a $\{0,1\}$-valued function $u$ which takes the value $1$ on half of the data  points, and then compute the thresholding energies $E_{n, \epsilon_n}^{h}(u)$ for $h \in \{2^{-5} \epsilon_n^2, . . . , 2^{4}\epsilon_n^2 \}$. The results are depicted in Figure~\ref{fig:thr_en}. We see that when $h \gg \epsilon^2$ the monotonicity seems to hold true: we experimented the same behavior also when using different distributions for the data points and different choices of functions $u$.
\begin{figure}[H]
\centering
\begin{minipage}{.5\textwidth}
  \centering
\includegraphics[width=.9\linewidth]{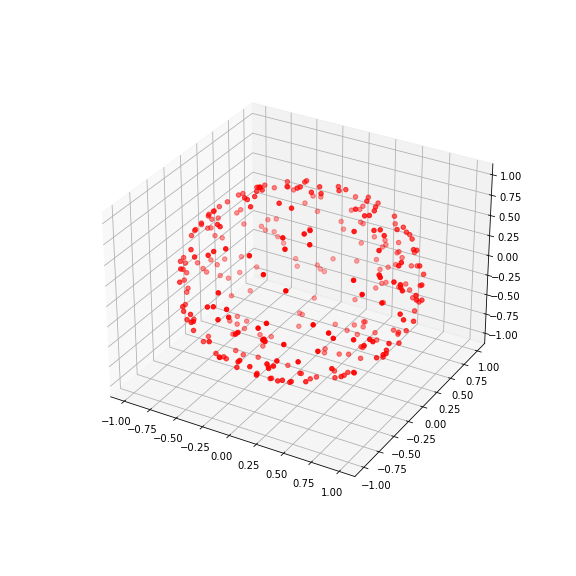}
  \caption{Sample points from uniform distribution on the unit sphere.}
  \label{fig:sample_sphere}
\end{minipage}%
\begin{minipage}{.5\textwidth}
  \centering
\includegraphics[width=.9\linewidth]{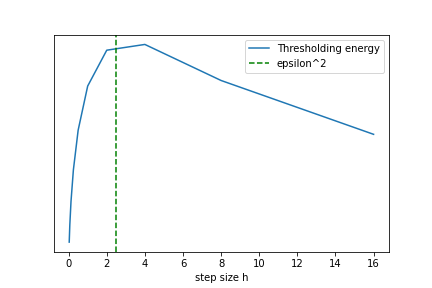}
  \caption{The thresholding energy $E_{n, \epsilon_n}^h$ for different values of $h$.}
  \label{fig:thr_en}
\end{minipage}
\end{figure}

\subsection{Extensions}\label{sec:extensions}

Here, we summarize the necessary changes to extend our results to other choices of graph Laplacians and to data dependent weights. With the notation and in the setting of Section~\ref{sec:mainres}, given $\lambda \in \mathbf{R}$ we define

\begin{equation}\label{eq:datadepweights}
w_{ij}^{(n, \epsilon, \lambda)} = \frac{w_{ij}^{(n, \epsilon)}}{\bigg( d_i^{(n, \epsilon)} d_j^{(n, \epsilon)}\bigg)^\lambda},
\end{equation}
and
\begin{equation}\label{eq:degreesdepweights}
d_i^{(n, \epsilon, \lambda)} = \frac{1}{n} \sum_{j=1}^n w_{ij}^{(n, \epsilon, \lambda)}.
\end{equation}

We can then consider the weighted graph $G_{n, \epsilon, \lambda}$ where the vertex set is given by $V_{n} := \{ X_1, . . . , X_n \}$ and the weights are given by~\eqref{eq:datadepweights}. Let $W^{(n, \epsilon, \lambda)}$ be the matrix of weights, and $D^{(n, \epsilon, \lambda)}$ the diagonal matrix of the degrees~\eqref{eq:degreesdepweights}. We then consider the following operators on $\mathcal{V}_{n, \epsilon, \lambda}$:

\begin{align*}
\Delta_{n, \epsilon, \lambda}^{rw} u &= \frac{1}{\epsilon^2}\left( \mathbb{I} - \frac{1}{n}(D^{(n, \epsilon, \lambda)})^{-1} W^{(n, \epsilon, \lambda)} \right)u,\ &u \in \mathcal{V}_{(n, \epsilon, \lambda)},
\\ \Delta_{n, \epsilon, \lambda}^{un} u &= \frac{1}{\epsilon^2}\left( D^{(n, \epsilon, \lambda)} - \frac{1}{n}W^{(n, \epsilon, \lambda)} \right)u,\ &u \in \mathcal{V}_{(n, \epsilon, \lambda)}.
\end{align*}
We define inner products on $\mathcal{V}_{(n, \epsilon, \lambda)}$ as

\begin{align*}
\langle u, v \rangle_{\mathcal{V}_{(n, \epsilon, \lambda)}, rw} &= \frac{1}{n} \sum_{i=1}^n d_{i}^{(n, \epsilon, \lambda)} u_i v_i,\ &u, v \in \mathcal{V}_{(n, \epsilon, \lambda)},
\\ \langle u, v \rangle_{\mathcal{V}_{(n, \epsilon, \lambda)}, un} &= \frac{1}{n} \sum_{i=1}^n u_i v_i,\ &u, v \in \mathcal{V}_{(n, \epsilon, \lambda)}.
\end{align*}
We also define inner products on $\mathcal{E}_{(n, \epsilon, \lambda)}$ as

\begin{align*}
\langle F, G \rangle_{\mathcal{E}_{(n, \epsilon, \lambda)}} &= \frac{1}{2n^2} \sum_{i, j:\ w_{ij}^{(n, \epsilon, \lambda)}\neq 0} F_{ij} G_{ij} \frac{1}{w^{(n, \epsilon, \lambda)}_{ij}},\quad F, G \in \mathcal{E}_{(n, \epsilon, \lambda)}.
\end{align*}
We define the Dirichlet energies

\begin{equation}\label{eq:dirichletdatadep}
E_{(n, \epsilon, \lambda)}(u) = \frac{1}{2} |\nabla u|_{\mathcal{E}_{(n, \epsilon, \lambda)}}^2,\quad u \in \mathcal{V}_{(n, \epsilon, \lambda)}.
\end{equation}
Finally, we define the thresholding energies

\begin{align*}
E_{(n, \epsilon, \lambda, rw)}^h(u) &= \frac{1}{\sqrt{h}} \sum_{i,j=1}^P \sigma_{ij} \langle u^i, e^{-h\Delta_{(n, \epsilon, \lambda)}^{rw}}u^j \rangle_{\mathcal{V}_{(n, \epsilon, \lambda, rw)}}, \quad u \in \mathcal{M}_n,
\\ E_{(n, \epsilon, \lambda, un)}^h(u) &= \frac{1}{\sqrt{h}} \sum_{i,j=1}^P \sigma_{ij} \langle u^i, e^{-h\Delta_{(n, \epsilon, \lambda)}^{un}}u^j \rangle_{\mathcal{V}_{(n, \epsilon, \lambda, un)}}, \quad u \in \mathcal{M}_n.
\end{align*}
We then have the following results.

\begin{theorem}\label{thm:convheatdatatep}
Let $\lambda \in \mathbf{R}$. Under the assumptions of Theorem~\ref{thm:discretenonloc} it holds almost surely that if $u_n \in \mathcal{V}_{(n, \epsilon, \lambda)}$ is a sequence of functions converging weakly to $u \in L^2(M)$ in $TL^2$, then for every $t>0$ we have
\begin{align*}
\lim_{n\to +\infty} e^{-t\Delta^{rw}_{(n, \epsilon_n, \lambda)}}u_n &= e^{-\frac{C_2}{2C_1} t \Delta_{\rho^s}}u\ \text{strongly in}\ TL^2,
\\ \lim_{n\to +\infty} e^{-t\Delta^{un}_{(n, \epsilon_n, \lambda)}}u_n &= e^{-\frac{C_2}{2C_1^{2\lambda}}t\rho^{1-2\lambda}\Delta_{\rho^s}}u\ \text{strongly in}\ TL^2,
\end{align*}
where $s = 2(1-\lambda)$.
\end{theorem}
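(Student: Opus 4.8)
The plan is to run, essentially verbatim, the gradient-flow argument behind Theorem~\ref{thm:conv_heat}, with the weighted graph $G_{n,\epsilon_n,\lambda}$ in place of the original one, tracking how the reweighting factor $(d_i^{(n,\epsilon_n)}d_j^{(n,\epsilon_n)})^{-\lambda}$ changes the limiting Dirichlet energy and the limiting Hilbert metric. The starting point is \emph{degree concentration}: almost surely $\sup_{1\le i\le n}\bigl|d_i^{(n,\epsilon_n)}-C_1\rho(X_i)\bigr|\to0$, the same Bernstein-type estimate already used for $\lambda=0$, which is available here since $\epsilon_n^{k}n/\log n\to\infty$ (a consequence of \eqref{eq:assumptions_rates_epsilon}). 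Because $\rho\in C^\infty(M)$ is bounded away from $0$ and $\infty$ on the compact manifold $M$, this yields, uniformly in $i,j$,
\[
w_{ij}^{(n,\epsilon_n,\lambda)}=(1+o(1))\,C_1^{-2\lambda}\rho(X_i)^{-\lambda}\rho(X_j)^{-\lambda}\,w_{ij}^{(n,\epsilon_n)},
\]
and, by the analogous concentration for the averaged degree, $\sup_i\bigl|d_i^{(n,\epsilon_n,\lambda)}-C_1^{1-2\lambda}\rho(X_i)^{1-2\lambda}\bigr|\to0$ almost surely. So up to a negligible multiplicative error the weighted graph is a Garc\'{i}a Trillos--Slep\v{c}ev graph carrying the extra \emph{symmetric} vertex weight $\rho(x)^{-\lambda}\rho(y)^{-\lambda}$.

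Next I would record the two analytic inputs needed for the gradient-flow scheme. First, the Dirichlet energies $E_{(n,\epsilon_n,\lambda)}$ from \eqref{eq:dirichletdatadep} $\Gamma$-converge in $TL^2(M)$, together with the usual $TL^2$-compactness of sequences of bounded energy, to
\[
E_\lambda(u)=\frac{C_2}{4\,C_1^{2\lambda}}\int_M|\nabla u|^2\,\rho^{2(1-\lambda)}\,d\volm\qquad(u\in H^1(M)),
\]
and $+\infty$ otherwise; this is obtained by inserting the smooth, uniformly positive and bounded factor $\rho(x)^{-\lambda}\rho(y)^{-\lambda}$ into the proof of the $\Gamma$-convergence of the Dirichlet energies recalled in Section~\ref{sec:prelim}, which only rescales the localized second moment of the kernel and hence only changes the limiting weight and constant. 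Second, if $u_n\to u$ and $v_n\to v$ strongly in $TL^2(M)$, then the degree estimates and the convergence of the empirical measures give
\[
\langle u_n,v_n\rangle_{\mathcal{V}_{(n,\epsilon_n,\lambda)},rw}\to C_1^{1-2\lambda}\!\int_M uv\,\rho^{2-2\lambda}\,d\volm,\qquad
\langle u_n,v_n\rangle_{\mathcal{V}_{(n,\epsilon_n,\lambda)},un}\to\int_M uv\,\rho\,d\volm,
\]
so the two Hilbert metrics underlying the two heat flows converge to those induced by $\mu^{rw}:=C_1^{1-2\lambda}\rho^{2-2\lambda}\volm$ and $\mu^{un}:=\rho\,\volm$.

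With these in place, the argument of Theorem~\ref{thm:conv_heat} applies with no structural change. Reduce first to $u_n=u|_{V_n}$ with $u\in C^\infty(M)$. The curve $v_n(t)=e^{-t\Delta^{rw}_{(n,\epsilon_n,\lambda)}}u_n$ (resp.\ $e^{-t\Delta^{un}_{(n,\epsilon_n,\lambda)}}u_n$) is the gradient flow of $E_{(n,\epsilon_n,\lambda)}$ for the metric $\langle\cdot,\cdot\rangle_{\mathcal{V}_{(n,\epsilon_n,\lambda)},rw}$ (resp.\ $\langle\cdot,\cdot\rangle_{\mathcal{V}_{(n,\epsilon_n,\lambda)},un}$) and therefore obeys the optimal energy dissipation inequality; passing to the limit via the $\Gamma$-convergence above and lower semicontinuity of the dissipation terms shows that the strong $TL^2$-limit $v(t)$ satisfies the continuum optimal energy dissipation inequality for the $\mu^{rw}$- (resp.\ $\mu^{un}$-) gradient flow of $E_\lambda$, which characterizes it. Computing $\nabla_{\mu}E_\lambda=\tfrac{1}{(\text{density of }\mu)}\cdot\tfrac{C_2}{2C_1^{2\lambda}}\bigl(-\divv(\rho^{2(1-\lambda)}\nabla u)\bigr)$ at $\mu=\mu^{rw}$ gives $\tfrac{C_2}{2C_1}\Delta_{\rho^s}u$, and at $\mu=\mu^{un}$ gives $\tfrac{C_2}{2C_1^{2\lambda}}\rho^{1-2\lambda}\Delta_{\rho^s}u$ (using $s-1=1-2\lambda$), i.e.\ exactly the two claimed limiting semigroups. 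Finally, remove the smoothness assumption on the data by the density/contraction argument from the proof of Theorem~\ref{thm:conv_heat}: smooth functions are $L^2$-dense, the discrete and continuum heat semigroups are contractions for each fixed $t$ (uniformly in $n$ by the metric convergence), and products of weakly and strongly $TL^2$-convergent sequences converge to the product of their limits.

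The work is in the bookkeeping rather than in any new idea. One must check that the degree estimates are uniform enough to survive the reweighting --- in particular for $\lambda<0$, where $(d_id_j)^{-\lambda}$ amplifies relative errors, although this stays controlled because $\rho$ is bounded above --- that the extra vertex weight $\rho^{-\lambda}(x)\rho^{-\lambda}(y)$ really affects only the constant and the weight in the $\Gamma$-limit of the Dirichlet energies and not the structure of the argument, and that the three exponents $s$, $1-2\lambda$, $2(1-\lambda)$ line up correctly between the Dirichlet energy, the limiting metric, and the weighted Laplacian. Once these are pinned down, both convergences follow from the scheme of Theorem~\ref{thm:conv_heat}.
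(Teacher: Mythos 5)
Your proposal follows the same route as the paper: the paper's own proof of this statement consists of the single remark that the argument is ``completely analogous'' to Theorem~\ref{thm:conv_heat}, with exactly two replacements, and you unpack the gradient-flow machinery in useful concrete detail --- the degree concentration $\sup_i|d_i^{(n,\epsilon_n,\lambda)}-C_1^{1-2\lambda}\rho(X_i)^{1-2\lambda}|\to 0$, the formula $E_\lambda(u)=\tfrac{C_2}{4C_1^{2\lambda}}\int_M|\nabla u|^2\rho^{2(1-\lambda)}\,d\volm$ for the $\Gamma$-limit of the Dirichlet energies, the identification of $\mu^{rw}$ and $\mu^{un}$, and the computation of $\nabla_\mu E_\lambda$ which reproduces both claimed semigroups. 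These are all correct and go beyond what the paper spells out.

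There is, however, one genuine omission. You list \emph{two} analytic inputs: the $\Gamma$-convergence of the data-dependent Dirichlet energies and the convergence of the two inner products. But the paper's list of replacements also contains, as item (1), the pointwise $TL^2$-convergence of the data-dependent graph Laplacians on $C^\infty(M)$ test functions (the analog of Theorem~\ref{thm:conv_lap}, taken from~\cite[Theorem 30]{Hein2007}), and this is not dispensable. In Step~3 of the proof of Theorem~\ref{thm:conv_heat}, the weak $L^2((0,T),L^2)$-limit $w$ of $\widetilde{\Delta_{n_j,\epsilon_{n_j}}v_{n_j}}$ has to be \emph{identified} as $c\,\Delta_{\rho^2}u$ by testing against $f(t)g(x)$ with $g\in C^\infty(M)$, using self-adjointness of the discrete Laplacian and precisely this pointwise convergence $\Delta_{n,\epsilon_n}g\to c\Delta_{\rho^2}g$ in $TL^2$. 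Lower semicontinuity of $\int_0^T|\Delta_{n,\epsilon,\lambda}v_n|^2_{\mathcal V_n}\,dt$, which is all you invoke, only controls $\Vert w\Vert_{L^2}$; without the identification you cannot pass the discrete energy dissipation inequality to the continuum one of Lemma~\ref{lem:energy_diss}, because that inequality is stated in terms of $\int_0^T\int_M|\Delta_{\rho^s}u|^2\,d\mu\,dt$, not of $\Vert w\Vert^2$. So you need to add a third input: the analog of Theorem~\ref{thm:conv_lap} for $\Delta^{rw}_{(n,\epsilon,\lambda)}$ and $\Delta^{un}_{(n,\epsilon,\lambda)}$, which is available from Hein, Audibert and von Luxburg. (A minor secondary point: the removal of the smoothness assumption in Step~5 of Theorem~\ref{thm:conv_heat} is not a density/contraction argument but a Dirichlet-energy compactness plus duality argument; you do cite the weak--strong pairing, which is the right tool, so this is a matter of phrasing rather than substance.)
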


\begin{theorem}\label{thm:discnonlocdatadep}
Under the assumptions of Theorem~\ref{thm:discretenonloc}, for every $\lambda \in \mathbf{R}$ it holds almost surely that for each fixed $h>0$, if $v_n$ converges weakly to $v$ in $TL^2(M)$,
\begin{align*}
\lim_{n \to +\infty} E_{(n, \epsilon_n, \lambda, rw)}^h(v_n) &= C_1^{\frac{1}{2}-2\lambda}\sqrt{\frac{C_2}{2}}E^{rw}_{\frac{C_2h}{2C_1}, \lambda}(v),
\\ \lim_{n \to +\infty} E_{(n, \epsilon_n, \lambda, un)}^h(v_n) &= \sqrt{\frac{C_2}{2C_1^{2\lambda}}}E^{un}_{\frac{C_2}{2C_1^{2\lambda}}, \lambda}(v),
\end{align*}
where we define, for $v \in \mathcal{M}$,

\begin{align*}
E^{rw}_{h, \lambda}(v) &:= \frac{1}{\sqrt{h}} \sum_{i,j=1}^P \sigma_{ij} \int_M u^i e^{-h\Delta_{\rho^s}}u^j \rho^s d\volm,
\\ E^{un}_{h, \lambda}(v) &:= \frac{1}{\sqrt{h}} \sum_{i,j=1}^P \sigma_{ij} \int_M u^i e^{-h\rho^{1-2\lambda}\Delta_{\rho^s}}u^j \rho d\volm,
\end{align*}
with $s = 2(1-\lambda)$.
\end{theorem}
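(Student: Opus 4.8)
The plan is to deduce Theorem~\ref{thm:discnonlocdatadep} from the strong $TL^2$-convergence of the heat operators in Theorem~\ref{thm:convheatdatatep}, in exactly the way Theorem~\ref{thm:discretenonloc} follows from Theorem~\ref{thm:conv_heat}: each thresholding energy is a finite combination of scalar products of the form $\langle v_n^i, e^{-h\Delta}v_n^j\rangle$, where the first factor converges weakly in $TL^2$ by hypothesis and the second converges strongly in $TL^2$ by Theorem~\ref{thm:convheatdatatep}, and the pairing of a weakly convergent sequence against a strongly convergent one converges to the pairing of the limits. The only new point compared with Theorem~\ref{thm:discretenonloc} is that for the random-walk Laplacian the scalar product $\langle\cdot,\cdot\rangle_{\mathcal{V}_{(n,\epsilon_n,\lambda),rw}}$ carries the reweighted degrees $d_i^{(n,\epsilon_n,\lambda)}$ as a weight, whose large-data limit must be identified.

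First I would record the density limits. Iterating the pointwise bias/variance estimates for degrees (those used in the proof of Theorem~\ref{thm:convheatdatatep}) --- once for $d_i^{(n,\epsilon_n)}$ and a second time for the reweighted kernel --- gives, almost surely and uniformly in $i$,
\[
d_i^{(n,\epsilon_n)}\longrightarrow C_1\,\rho(X_i),\qquad d_i^{(n,\epsilon_n,\lambda)}\longrightarrow \big(C_1\,\rho(X_i)\big)^{1-2\lambda}.
\]
Now fix transport maps $T_n\colon(M,\nu)\to(V_n,\nu_n)$ realising the weak $TL^2$-convergence $v_n\rightharpoonup v$ of Definition~\ref{def:weaktlp}, so that $T_n\to\mathrm{id}$ in $L^2(\nu)$ and $v_n^i\circ T_n\rightharpoonup v^i$ weakly in $L^2(\nu)$. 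Since $\rho$ is continuous, $d^{(n,\epsilon_n,\lambda)}\circ T_n\to(C_1\rho)^{1-2\lambda}$ uniformly on $M$; and, by the equivalence between $TL^2$-convergence and $L^2(\nu)$-convergence along any maps tending to the identity, Theorem~\ref{thm:convheatdatatep} yields $(e^{-h\Delta^{rw}_{(n,\epsilon_n,\lambda)}}v_n^j)\circ T_n\to e^{-\frac{C_2h}{2C_1}\Delta_{\rho^s}}v^j$ strongly in $L^2(\nu)$.

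Next I would expand each energy and pass to the limit term by term. For the random-walk case,
\[
E_{(n,\epsilon_n,\lambda,rw)}^h(v_n)=\frac{1}{\sqrt h}\sum_{i,j=1}^P\sigma_{ij}\int_M \big(d^{(n,\epsilon_n,\lambda)}\circ T_n\big)\big(v_n^i\circ T_n\big)\big((e^{-h\Delta^{rw}_{(n,\epsilon_n,\lambda)}}v_n^j)\circ T_n\big)\,d\nu.
\]
The product of the uniformly convergent factor $d^{(n,\epsilon_n,\lambda)}\circ T_n$ and the strongly convergent factor $(e^{-h\Delta^{rw}}v_n^j)\circ T_n$ converges strongly in $L^2(\nu)$ to $(C_1\rho)^{1-2\lambda}e^{-\frac{C_2h}{2C_1}\Delta_{\rho^s}}v^j$; pairing it against the weakly convergent $v_n^i\circ T_n$ and using $\rho^{1-2\lambda}\rho=\rho^s$ gives
\[
\langle v_n^i, e^{-h\Delta^{rw}_{(n,\epsilon_n,\lambda)}}v_n^j\rangle_{\mathcal{V}_{(n,\epsilon_n,\lambda),rw}}\longrightarrow C_1^{1-2\lambda}\int_M v^i\,e^{-\frac{C_2h}{2C_1}\Delta_{\rho^s}}v^j\,\rho^s\,d\volm.
\]
Summing over $i,j$, dividing by $\sqrt h$, and rewriting with $h'=\tfrac{C_2h}{2C_1}$ (so that $\tfrac{C_1^{1-2\lambda}}{\sqrt h}=C_1^{\frac12-2\lambda}\sqrt{\tfrac{C_2}{2}}\,\tfrac{1}{\sqrt{h'}}$) identifies the limit as $C_1^{\frac12-2\lambda}\sqrt{\tfrac{C_2}{2}}\,E^{rw}_{\frac{C_2h}{2C_1},\lambda}(v)$. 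The unnormalised case is the identical argument with the weight $d^{(n,\epsilon_n,\lambda)}$ replaced by $1$ and the second limit of Theorem~\ref{thm:convheatdatatep} in place of the first, giving $\langle v_n^i, e^{-h\Delta^{un}_{(n,\epsilon_n,\lambda)}}v_n^j\rangle_{\mathcal{V}_{(n,\epsilon_n,\lambda),un}}\to\int_M v^i\,e^{-\frac{C_2h}{2C_1^{2\lambda}}\rho^{1-2\lambda}\Delta_{\rho^s}}v^j\,\rho\,d\volm$; chasing the constant with $h'=\tfrac{C_2h}{2C_1^{2\lambda}}$ then produces $\sqrt{\tfrac{C_2}{2C_1^{2\lambda}}}\,E^{un}_{\frac{C_2h}{2C_1^{2\lambda}},\lambda}(v)$.

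The genuinely substantial work is already done in Theorem~\ref{thm:convheatdatatep}. Within this argument the one step that needs care is the degree asymptotics for $\lambda\neq0$: one must check that the doubly averaged random function $d^{(n,\epsilon_n,\lambda)}$ converges \emph{uniformly} to the continuous limit $(C_1\rho)^{1-2\lambda}$, because it is uniform convergence --- rather than merely, say, $TL^2$-convergence --- that allows this weight to be absorbed into the strongly convergent heat-operator factor without disrupting the weak-times-strong passage to the limit. Everything else is routine constant-chasing.
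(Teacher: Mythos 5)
Your proposal is correct and follows essentially the same route the paper takes: the paper simply notes that Theorem~\ref{thm:discnonlocdatadep} is proved exactly as Theorem~\ref{thm:discretenonloc} is proved from Theorem~\ref{thm:conv_heat}, Proposition~\ref{prop:weak_tlp}, and Lemma~\ref{lem:degrees_conv} --- i.e.\ by the weak-times-strong pairing of $v_n^i$ against the heat-operator output, weighted by the (uniformly) converging degrees --- with Theorem~\ref{thm:convheatdatatep} and the $\lambda$-dependent degree asymptotics in place of their $\lambda=0$ counterparts. You spell out the degree limit $d^{(n,\epsilon_n,\lambda)}\to(C_1\rho)^{1-2\lambda}$ and the constant chasing, which the paper leaves implicit; your emphasis on needing \emph{uniform} (not merely $TL^2$) convergence of the reweighted degrees is exactly the right point of care. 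Incidentally, your bookkeeping $h'=\tfrac{C_2 h}{2C_1^{2\lambda}}$ reveals that the paper's statement has a small typographical slip in the unnormalized case: the subscript should read $E^{un}_{\frac{C_2 h}{2C_1^{2\lambda}},\lambda}(v)$ (with the factor of $h$) rather than $E^{un}_{\frac{C_2}{2C_1^{2\lambda}},\lambda}(v)$, as your computation gives.
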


The proofs of Theorem~\ref{thm:convheatdatatep} and Theorem~\ref{thm:discnonlocdatadep} are completely analogous to the proofs of Theorem~\ref{thm:conv_heat} and Theorem~\ref{thm:discretenonloc} (which consider the random walk Laplacian with $\lambda = 0$). The only needed changes are:
\begin{enumerate}
\item Replace the use of Theorem~\ref{thm:conv_lap} with the convergence of the corresponding Laplacian (see~\cite[Theorem 30]{Hein2007}).
\item Replace the use of Theorem~\ref{thm:gamma_dirichlet} with the analogous statement for the Dirichlet energies~\eqref{eq:dirichletdatadep}. It seems that this is not written down anywhere in the literature, but the proof of Garc\'{i}a Trillos and Slep\v{c}ev~\cite{GarciaTrillos2018} should be easily adapted to this setting.
\end{enumerate}
Using Theorem~\ref{thm:discnonlocdatadep} one can clearly extend also the analogous statement for the semi-supervised MBO scheme.

\section{Preliminaries}\label{sec:prelim}

\subsection{Weighted manifolds}
Hereafter, $M = (M, g, \mu)$ will be a compact Riemannian manifold with $\partial M = \emptyset$. We will assume that $\mu = \xi \volm$ for some $\xi \in C^{\infty}(M)$ such that $\xi >0$. 

For every $x \in M$, we denote by $\langle \cdot, \cdot \rangle_x$ the inner product on $T_xM$ induced by the metric $g$, i.e., for any $v, w \in T_xM$ we have $\langle v, w \rangle_x = g_x(v, w)$. Let $f: M \to \mathbf{R}$ be a smooth function. Then the gradient $\nabla f(x) \in T_xM$ is defined uniquely by the relation

\begin{equation}
\langle \nabla f(x), Y \rangle_x = d_xf(Y)\ \forall Y \in T_xM.\nonumber
\end{equation} 
Let $\Gamma(TM)$ be the space of smooth vector fields on $M$. We can define the (weighted) divergence operator $\divv_{\xi}: \Gamma(TM) \to C^{\infty}(M)$ by the requirement that for any $f \in C^{\infty}(M)$ and $Y \in \Gamma(TM)$

\begin{equation}
\int_M \langle \nabla f(x), Y(x) \rangle_x d\mu(x) = -\int_M f(x) \divv_{\xi}Y(x) d\mu(x).\nonumber
\end{equation}
It is easy to check that the divergence can be expressed in local coordinates as

\begin{equation}
\divv_{\xi} Y = \frac{1}{\xi \sqrt{\operatorname{det}(g)}}\sum_{i=1}^k\partial_i \left( \xi \sqrt{\operatorname{det}(g)}Y^i\right).\nonumber
\end{equation}
We also define the weighted Laplacian $\Delta_{\xi}: C^{\infty}(M) \to C^{\infty}(M)$ as $\Delta_{\xi} = -\divv_{\xi} \circ \nabla$. A distribution on $M$ is a continuous linear functional $T: C^{\infty}(M) \to \mathbf{R}$. We denote by $\mathcal{D}'(M)$ the space of distributions on $M$. We follow the terminology of~\cite{Grigoryan2009} and say that a distributional vector field is a continuous linear functional $V: \Gamma(TM) \to \mathbf{R}$. If $V$ is a distributional vector field, we define its divergence as the distribution $\divv_{\xi}V \in \mathcal{D}'(M)$ such that $\divv_{\xi}V(f) = -\langle V, \nabla f \rangle$. We define the Sobolev space

\begin{equation}
W^{1,2}(M) := \left\{ u \in L^2(M, \mu):\ \nabla u \in L^2(TM, \mu) \right\},\nonumber
\end{equation}
which is a Hilbert space when endowed with the inner product

\begin{equation}
(u, v)_{W^{1,2}(M)} = (u, v)_{L^2(M, \mu)} + (\nabla u, \nabla v)_{L^2(M, \mu)}.\nonumber
\end{equation}
We also denote $W^{1,2}(M)$ by $H^1(M)$. We denote by $H^{-1}(M)$ its dual. If $T$ is a distribution, we define $\Delta_{\xi} T \in \mathcal{D}'(M)$ by requiring $\Delta_{\xi} T(f) = T(\Delta_{\xi}(f))$. In particular if $u \in L^2(M, \mu)$, then $\Delta_{\xi}u \in \mathcal{D}'(M)$. Define

\begin{equation}
\mathcal{W}^{2,2}(M) = \left\{ u \in W^{1,2}(M):\ \Delta_{\xi}u \in L^2(M, \mu)\right\}.\nonumber
\end{equation}
It is a standard result that $\Delta_{\xi}$ can be extended uniquely to a self-adjoint operator on $\mathcal{W}^{2,2}(M)$, see for instance~\cite[Theorem 4.6]{Grigoryan2009}. It can be shown that $\Delta_{\xi}$ is a nonnegative self-adjoint operator in $L^2(M)$ and $\operatorname{spec}(\Delta_{\xi}) \subset [0, +\infty)$. For $u \in L^2(M, \mu)$ we denote by $T(t)u = v(t, \cdot)$ the solution to the Cauchy problem

\begin{equation}
\begin{cases}
\partial_t v = -\Delta_{\xi}v\ &\text{in}\ (0, +\infty) \times M,
\\
v(0, x) = u(x) &\text{on}\ M.
\end{cases}\nonumber
\end{equation}
More precisely, the map $t \in (0, +\infty) \mapsto T(t)u = v(t, \cdot) \in L^2(M)$ is characterized by the following properties:

\begin{itemize}
\item It is strongly differentiable in $L^2(M)$.
\item For every $t > 0$ we have $T(t)u \in \operatorname{dom}(\Delta_{\xi})$ and
\begin{equation}
\frac{dT(t)u}{dt} = -\Delta_{\xi}T(t)u.\nonumber
\end{equation} 
\item $T(t)u \to u$ in $L^2(M)$ as $t \downarrow 0$.
\end{itemize}
One way of constructing $T(t)$ is by means of the spectral resolution of $\Delta_{\xi}$. I.e., one defines linear operators $T(t): L^2(M) \to L^2(M)$ by 

\begin{equation}
T(t) := \int_{0}^\infty e^{-t\gamma} dE_{\gamma},\nonumber
\end{equation}
where $E_{\gamma}$ is the spectral resolution of $\Delta_{\xi}$. We refer to~\cite[Chapter 7]{Grigoryan2009} for the details. Furthermore, one can show that there exists a smooth map $p: (0, +\infty) \times M \times M \to \mathbf{R}$ such that for any $u \in L^2(M)$ and every $t > 0$

\begin{equation}\label{eq:heat_semig}
e^{-t\Delta_{\xi}}u(x) := T(t)u(x) = \int_M p(t, x, y)u(y)d\mu(y).\nonumber
\end{equation}
We call $p$ the heat kernel for $\Delta_{\xi}$.

Another more constructive way to prove the existence of the heat kernel is by the so-called \textit{parametrix method}. This has the advantage of giving immediately the asymptotic expansion~\eqref{eq:asym_exp}. However, the construction is technical and we think it is not worth sketching it here. The reader is referred to~\cite[Chapter 3]{Rosenberg1997}, where this construction is carried out in detail for the case of constant density $\xi = 1$.

\subsection{The MBO scheme on weighted manifolds}\label{subsec:mboschememan}

In this subsection, we recall the MBO scheme on weighted manifolds, which can be used to approximate the evolution by multiphase (weighted) mean curvature flow. Hereafter $M$ is a $k$-dimensional closed Riemannian manifold endowed with a weight $\xi \in C^{\infty}(M)$, $\xi > 0$.

\begin{algorithm}[MBO scheme on manifolds]\label{algoman}
Let $P$ be the number of phases, let $h > 0$ be a time-step size and let $\kappa > 0$ be a diffusion coefficient. Let $\chi_0:M \to \{0,1\}^P$ be a partition of $M$ into $P$ phases. To obtain an approximation of the evolution of $\chi_0$ by multiphase mean curvature flow define inductively a new partition $\chi_{n+1}:M \to \{0, 1\}^P$ starting from $\chi_n:M \to \{0, 1\}^P$ by performing the following steps:
\begin{enumerate}
\item \textbf{Diffusion}. For every $m = 1, . . . , P$ define 
\begin{equation}
u_m^n := \sum_{m \neq l} \sigma_{ml} e^{-\kappa h\Delta_{\xi}}\chi_n^l.\nonumber
\end{equation}
\item \textbf{Thresholding}. Define a new partition $\chi_{n+1}:V \to \{0, 1\}^P$ by defining, for every $m = 1, . . . , P$
\begin{equation}
\left\{\chi_{n+1}^m = 1 \right\} := \left\{ x \in M:\ u_n^m(x) < \min_{l \neq m} u_n^l(x)\right\}.\nonumber
\end{equation}
\end{enumerate}
\end{algorithm}

We then have the following minimizing movements interpretation for the previous algorithm.

\begin{lemma}[\cite{Esedoglu2015}]\label{lem:minmovman}
Assume that $\sigma$ is negative semidefinite on $(1, . . . , 1)^{\perp}$, which means that $v \cdot \sigma v \le 0$ for all $v \in \mathbf{R}^d$ with $v \cdot (1, . . . , 1)^{\perp} = 0$. Given a step-size $h > 0$ and a diffusion parameter $\kappa > 0$, to obtain the new partition $\chi_{n+1}:M \to \{0, 1\}^P$ starting from $\chi_n: M \to \{0, 1\}^P$ one can define 
\begin{equation}
\chi_{n+1} \in \operatorname{argmin}_{u \in \mathcal{M}} \left\{ \sqrt{\kappa} E_{\kappa h}(u) - \frac{1}{\sqrt{h}}\sum_{i \neq j}\sigma_{ij}\int_M (u_i - \chi^{n}_i) e^{-\kappa h\Delta_\xi}(u_j - \chi_j^n) \xi d\volm\right\}.
\end{equation}
\end{lemma}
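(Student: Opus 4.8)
The goal is to prove that the MBO update on the weighted manifold (Algorithm~\ref{algoman}) coincides with the minimizer of the stated functional; this is a manifold analogue of the Esedo\=glu--Otto minimizing movements principle, so the plan is to adapt their linearity/convexity argument. First I would expand the functional to isolate the $u$-dependent part. Writing
\begin{equation*}
J(u) := \sqrt{\kappa}\,E_{\kappa h}(u) - \frac{1}{\sqrt{h}}\sum_{i\neq j}\sigma_{ij}\int_M (u_i-\chi_i^n)\,e^{-\kappa h\Delta_\xi}(u_j-\chi_j^n)\,\xi\,d\volm,
\end{equation*}
and recalling that $\sqrt{\kappa}\,E_{\kappa h}(u) = \frac{1}{\sqrt{h}}\sum_{i,j}\sigma_{ij}\int_M u_i\,e^{-\kappa h\Delta_\xi}u_j\,\xi\,d\volm$ (using $E_{\kappa h}(u)=\frac{1}{\sqrt{\kappa h}}\sum\sigma_{ij}\int u_i e^{-\kappa h\Delta_\xi}u_j\,\xi d\volm$), the cross terms in the second sum cancel the quadratic-in-$u$ part, leaving an expression that is \emph{affine} in $u$ up to a constant:
\begin{equation*}
J(u) = \frac{1}{\sqrt{h}}\sum_{i\neq j}\sigma_{ij}\left[ 2\int_M u_i\,e^{-\kappa h\Delta_\xi}\chi_j^n\,\xi\,d\volm - \int_M \chi_i^n\,e^{-\kappa h\Delta_\xi}\chi_j^n\,\xi\,d\volm\right],
\end{equation*}
where I used self-adjointness of $e^{-\kappa h\Delta_\xi}$ on $L^2(M,\mu)$ to symmetrize. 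Dropping the $u$-independent constant, minimizing $J$ over $\mathcal{M}$ is the same as minimizing the linear functional $u\mapsto \frac{2}{\sqrt{h}}\sum_i \int_M u_i\Big(\sum_{j\neq i}\sigma_{ij}e^{-\kappa h\Delta_\xi}\chi_j^n\Big)\xi\,d\volm = \frac{2}{\sqrt{h}}\sum_i\int_M u_i\,u_i^n\,\xi\,d\volm$, with $u_i^n$ exactly the diffused quantity from the Diffusion step.

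Next, since the functional is linear in $u$ and $\mathcal{M}$ is a convex set (a product of simplices fibered over $M$), minimization can be done pointwise: for $\mu$-a.e.\ $x\in M$ one chooses the vector $u(x)$ in the standard simplex $\Delta^{P-1}$ minimizing $\sum_i u_i(x)\,u_i^n(x)$, and the minimum of a linear functional over a simplex is attained at a vertex, namely by putting all mass on an index $m$ achieving $u_m^n(x) = \min_{l} u_l^n(x)$. This is precisely the Thresholding rule $\{\chi_{n+1}^m=1\} = \{x: u_n^m(x) < \min_{l\neq m}u_n^l(x)\}$ (with ties broken by a fixed ordering, which affects only a $\mu$-null set as in the discrete case). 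Hence the measurable selection $\chi_{n+1}$ produced by Algorithm~\ref{algoman} is a minimizer of $J$ over $\mathcal{M}$, which is the claim. I would also note the role of $\sigma$ being negative semidefinite on $(1,\dots,1)^\perp$: this is what guarantees $E_{\kappa h}\ge 0$ and more importantly is implicitly the hypothesis making the original energy-dissipation heuristic meaningful; for the bare statement "$\chi_{n+1}\in\operatorname{argmin}$" one only needs that the quadratic part cancels, but I would keep the hypothesis as stated.

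The one point requiring care — the main (though modest) obstacle — is the measurability and integrability bookkeeping needed to legitimize the pointwise minimization: one must check that $u_i^n = \sum_{j\neq i}\sigma_{ij}e^{-\kappa h\Delta_\xi}\chi_j^n$ is a genuine (smooth, in fact) function on $M$ so that the pointwise argmin is well-defined $\mu$-a.e., that the resulting $\chi_{n+1}$ is measurable (it is a finite union of super/sub-level sets of continuous functions, hence Borel), and that exchanging "minimize the integral" with "integrate the pointwise minimum" is valid — this is a standard application of the fact that for a nonnegative (after adding a constant) linear integrand over a fiberwise-convex constraint set the infimum commutes with integration, e.g.\ by a direct comparison argument: any competitor $u\in\mathcal{M}$ satisfies $\sum_i u_i(x)u_i^n(x)\ge \min_l u_l^n(x) = \sum_i \chi_{n+1}^i(x)u_i^n(x)$ pointwise, and integrating against $\xi\,d\volm$ gives $J(u)\ge J(\chi_{n+1})$ immediately, with no measurable-selection machinery needed at all. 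I would present the proof in this last, cleanest form: compute $J$ explicitly, reduce to the linear functional, and conclude by the pointwise inequality integrated over $M$. The heat-operator facts invoked — self-adjointness on $L^2(M,\mu)$ and smoothing — are recalled in Section~\ref{sec:prelim}.
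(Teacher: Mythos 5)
Your argument is correct and is precisely the standard Esedo\={g}lu--Otto computation that the paper cites for this lemma without reproducing a proof: expand the functional, use $\sigma_{ii}=0$ and self-adjointness of $e^{-\kappa h\Delta_\xi}$ on $L^2(M,\xi\volm)$ to cancel the $u$-quadratic terms, and then minimize the resulting affine functional pointwise over the simplex, where the optimum is at the vertex selected by the thresholding rule (ties on a $\mu$-null set). Your side observation is also accurate: the negative-semidefiniteness of $\sigma$ on $(1,\dots,1)^\perp$ plays no role in the argmin identity itself (only the algebraic cancellation does), but it is what makes the subtracted term a genuine nonnegative movement limiter and hence what makes the minimizing-movements interpretation, and the monotonicity $E_{\kappa h}(\chi_{n+1})\le E_{\kappa h}(\chi_n)$, meaningful.
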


\subsection{BV functions on weighted manifolds}

We begin this section introducing the total variation $|Du|_{\xi}$ of a function $u \in L^1(M)$. We define

\begin{equation}
|Du|_{\xi}(M) = \sup\left\{ \int_{M} u \divv_{\xi} Y d\mu:\ Y \in \Gamma(TM), |Y| \le 1 \right\}.\nonumber
\end{equation}
We say that $u \in L^1(M)$ is in $BV(M)$ provided $|Du|_\xi(M) < +\infty$. One can prove the following result.

\begin{theorem}\label{thm:finite_per_man}
Let $u \in BV(M)$, then there exist a Radon measure $|Du|_{\xi} \in \mathcal{M}_+(M)$ and a $|Du|_{\xi}$-measurable vector field $\sigma_u$ such that $|\sigma_u| = 1\ |Du|_{\xi}$-almost everywhere and such that

\begin{equation}\label{eq:ibp}
\int_M u \divv_\xi X d\mu = - \int_M \langle \sigma_u, X \rangle d|Du|_{\xi}\ \forall X \in \Gamma(TM).
\end{equation}
\end{theorem}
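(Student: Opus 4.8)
The plan is to realize the total variation as the total-variation measure of a $TM$-valued Radon measure produced by the Riesz representation theorem, and then to extract $\sigma_u$ by a polar decomposition --- exactly as in the Euclidean theory of $BV$ functions, with the only extra work being the passage from the manifold to the vector bundle $TM$.

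First, consider the linear functional $L \colon \Gamma(TM) \to \mathbf{R}$, $L(X) := \int_M u\, \divv_\xi X\, d\mu$. The definition of $|Du|_\xi(M)$ says precisely that $L$ is bounded with respect to the uniform norm on $\Gamma(TM)$ induced by the Riemannian metric, and in fact $\|L\| = |Du|_\xi(M)$, since on the compact manifold $M$ smooth sections are dense in the Banach space $C(M;TM)$ of continuous sections (mollify in finitely many charts and patch with a partition of unity). Thus $L$ extends uniquely to a bounded functional on $C(M;TM)$ of the same norm. To represent it by a measure I would use the isometric embedding $M \subset \mathbf{R}^d$: $C(M;TM)$ is a closed subspace of $C(M;\mathbf{R}^d)$, so by Hahn--Banach $L$ extends to $\tilde L$ on $C(M;\mathbf{R}^d)$ with $\|\tilde L\| = \|L\|$, and the classical Riesz representation theorem (valid since $M$ is compact metric) gives a finite $\mathbf{R}^d$-valued Radon measure $\tilde\lambda$ with $\tilde L(X) = -\int_M \langle X, d\tilde\lambda\rangle$ and $|\tilde\lambda|(M) = \|\tilde L\|$. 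Pushing $\tilde\lambda$ through the fiberwise orthogonal projection $\mathbf{R}^d \to T_xM$ produces a finite $TM$-valued Radon measure $\lambda$ which still represents $L$ on $\Gamma(TM)$ (because $X(x) \in T_xM$, so the pairing is unchanged) and satisfies $|\lambda|(M) \le |\tilde\lambda|(M) = \|L\| = |Du|_\xi(M)$; the reverse inequality $|\lambda|(M) \ge \|L\| = |Du|_\xi(M)$ is immediate from $L(X) = -\int_M \langle X, d\lambda\rangle$, so $|\lambda|(M) = |Du|_\xi(M)$ and the notation is consistent. (Alternatively one may argue directly in charts and glue the local representing measures with a partition of unity.)

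Finally, I would invoke the polar decomposition of finite vector-valued measures: there is a $|\lambda|$-measurable section $\sigma_u$ of $TM$ with $|\sigma_u| = 1$ for $|\lambda|$-a.e.\ $x$ and $\lambda = \sigma_u\,|\lambda|$. Setting $|Du|_\xi := |\lambda| \in \mathcal{M}_+(M)$, the representation of $L$ becomes exactly~\eqref{eq:ibp}. The only genuinely delicate point is the identification of the dual of $C(M;TM)$ with a space of $TM$-valued Radon measures; I expect most of the write-up to be spent making that identification clean via the embedding trick above, while the density of smooth sections and the polar decomposition are routine.
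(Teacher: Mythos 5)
Your proof is correct, and it takes a genuinely different route from the paper's. The paper proceeds locally: it picks an orthonormal frame on each chart $V$, runs the Riesz representation argument chart by chart to produce a positive Radon measure $\gamma_V$ and a unit vector field $\sigma_u^V$, verifies that the constructions agree on overlaps $V_1 \cap V_2$, and then glues everything together with a partition of unity. Your argument, by contrast, is global: you treat $L$ as a single bounded functional on $C(M;TM)$, realize $C(M;TM)$ as a closed subspace of $C(M;\mathbf{R}^d)$ via the ambient embedding, invoke Hahn--Banach to extend $L$ norm-preservingly, apply the classical Riesz representation theorem once for $\mathbf{R}^d$-valued measures, project the resulting measure fiberwise back onto $TM$, and finish with a polar decomposition. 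What your approach buys is that the overlap-consistency check --- which the paper states but does not detail --- disappears entirely; it is replaced by the norm bookkeeping around the projection, which you carry out correctly ($|\lambda|(M)\le|\tilde\lambda(M)|$ since the fiberwise orthogonal projection is a contraction, with the reverse inequality from the representation itself). What it costs is the reliance on the ambient embedding $M\subset\mathbf{R}^d$ and on Hahn--Banach, neither of which is genuinely a loss in this paper since $M$ is assumed to be a Riemannian submanifold of $\mathbf{R}^d$ throughout; the chart-based proof, on the other hand, would work verbatim for an abstract compact Riemannian manifold. You actually flag the chart-based alternative yourself at the end, so you are aware of both routes; the one you chose is arguably the cleaner write-up in the present setting.
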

The proof of the theorem is an adaptation of the classical Riesz representation theorem: first one works locally on an open set $V \subset M$ using an orthonormal frame $\{E_1, . . . , E_k\}$. Following the same lines of the proof of the Riesz representation theorem one can check that there exist a Radon measure $\gamma_V \in \mathcal{M}_+(V)$ and a $\gamma_V$ measurable vector field $\sigma^V_u$ such that $|\sigma_u^V| = 1\ \gamma_V$-a.e.\ and such that~\eqref{eq:ibp} holds true for all $X \in \Gamma(TV)$. Then one checks that if $V_1, V_2$ are two open subsets of $M$, the construction is consistent on $V_1 \cap V_2$. One can then take a covering $\{V_i\}_{i}$ and apply the construction on each element of the covering. Taking a partition of unity $\{\rho_i\}$ subordinate to the covering one defines

\begin{itemize}
\item $|Du|_{\xi}(W) := \sum_i \int_{W \cap V_i} \rho_i d\gamma_{V_i}$.
\item $\sigma|_{V_i} := \sigma_{V_i}$.
\end{itemize}
A subset $E \subset M$ is said to be of \textit{finite perimeter} if $\chi_E \in BV(M)$. If $E$ is a set of finite perimeter, we denote by $\operatorname{Per}_{\xi}(E) := |D\chi_E|_{\xi}(M)$ its perimeter. We will make use of the following elementary lemma, which follows easily from Theorem~\ref{thm:finite_per_man}.

\begin{lemma}\label{lemma:chart_fp}
Let $u \in L^1(M)$. Then $u \in BV(M)$ if and only if for every chart $(V, \psi)$ the map $u \circ \psi^{-1}$ is in $BV(\psi(V))$. In that case we have that for any chart $(V, \psi)$
\begin{equation}\label{eq:push_forw_bv}
\psi_{\#}|Du|_{\xi} = \gamma |D(u\circ \psi^{-1})|,
\end{equation}
where $\gamma = \xi \circ \psi^{-1} \sqrt{\operatorname{det}g^{ij}}$.
\end{lemma}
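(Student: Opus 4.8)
The plan is to prove Lemma~\ref{lemma:chart_fp} by reducing the global $BV$ notion on the weighted manifold to the classical Euclidean $BV$ notion in each chart, using the change of variables formula and the local description of the weighted divergence. First I would fix a chart $(V, \psi)$ with $\psi: V \to \psi(V) \subset \mathbf{R}^k$ and recall that, by the local expression for $\divv_\xi$ derived in the Preliminaries, for any $Y \in \Gamma(TV)$ written in coordinates as $Y = \sum_i Y^i \partial_i$ we have $\divv_\xi Y = \frac{1}{\xi\sqrt{\det g}} \sum_i \partial_i(\xi\sqrt{\det g}\, Y^i)$, and $d\mu = \xi\sqrt{\det g}\, dx$ in these coordinates. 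Hence for a vector field supported in $V$,
\begin{equation*}
\int_M u\, \divv_\xi Y\, d\mu = \int_{\psi(V)} (u\circ\psi^{-1})(x) \sum_i \partial_i\big(\xi\sqrt{\det g}\, Y^i\big)(x)\, dx = \int_{\psi(V)} (u\circ\psi^{-1}) \, \divv_{\mathbf{R}^k}\big(\tilde{Y}\big)\, dx,
\end{equation*}
where $\tilde{Y}^i := (\xi\sqrt{\det g}\, Y^i)\circ\psi^{-1}$. The map $Y \mapsto \tilde{Y}$ is a linear bijection between compactly supported smooth vector fields on $V$ and compactly supported smooth vector fields on $\psi(V)$, so testing against all $Y$ with $|Y|\le 1$ (in the Riemannian metric) corresponds to testing against a family of Euclidean vector fields; the metric comparison on compact subsets only changes the constraint $|\tilde Y|\le 1$ by a locally bounded factor, which does not affect finiteness of the supremum.

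The key steps, in order: (1) establish the change-of-variables identity above and the bijection $Y \leftrightarrow \tilde Y$; (2) deduce that $|Du|_\xi(V') < +\infty$ for every open $V' \Subset V$ if and only if $u\circ\psi^{-1} \in BV_{\mathrm{loc}}(\psi(V))$, using that on $\overline{V'}$ the weight $\xi\sqrt{\det g}$ and the metric are bounded above and below; (3) use a partition of unity $\{\rho_i\}$ subordinate to a finite atlas (finite by compactness of $M$) to patch the local statements into the global equivalence $u \in BV(M) \iff u\circ\psi^{-1} \in BV(\psi(V))$ for every chart; (4) for the identity~\eqref{eq:push_forw_bv}, invoke Theorem~\ref{thm:finite_per_man} to get the measure $|Du|_\xi$ and the polar vector field $\sigma_u$, and similarly the classical structure theorem gives $|D(u\circ\psi^{-1})|$ with its polar field on $\psi(V)$; then compare the two integration-by-parts formulas on $V$ via the change of variables to identify the pushforward $\psi_\#|Du|_\xi$ with $\gamma\, |D(u\circ\psi^{-1})|$, reading off $\gamma = (\xi\circ\psi^{-1})\sqrt{\det g^{ij}}$ as the Jacobian-type factor relating $d\mu$ to $dx$.

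The main obstacle I anticipate is step (4): matching the \emph{measures} rather than just the finiteness of total variation. One has to be careful that the factor $\xi\sqrt{\det g}$ enters the divergence formula \emph{inside} the derivative, so the naive guess that the Euclidean gradient measure is simply $\xi\sqrt{\det g}$ times the Riemannian one requires justification — the extra terms from differentiating $\xi\sqrt{\det g}$ produce an absolutely continuous contribution that must be shown to vanish when $u$ is merely $BV$ (it does, because those terms pair $u$ against a smooth function, contributing nothing to the singular derivative, and the identity of measures is obtained by testing against \emph{all} smooth compactly supported vector fields, not just divergence-free ones). A clean way to handle this is to note that both sides of~\eqref{eq:push_forw_bv} are Radon measures on $\psi(V)$ and it suffices to check they agree on the linear functional $X \mapsto \int \langle \cdot, X\rangle$ for all $X \in C_c^\infty(\psi(V); \mathbf{R}^k)$, which follows directly by substituting $Y := (\psi^{-1})_* ((\xi\sqrt{\det g})^{-1} X)$, i.e.\ $Y^i = ((\xi\sqrt{\det g})^{-1}\circ\psi)\, (X^i\circ\psi)$, into~\eqref{eq:ibp} and using the Euclidean structure theorem. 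The remaining verifications — the metric comparison bounds, the partition-of-unity patching, and the measurability of $\sigma_u$ — are routine and I would state them without detailed computation.
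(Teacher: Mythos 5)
Your overall strategy---localize, push~\eqref{eq:ibp} through the chart with the substitution $Y^i = ((\xi\sqrt{\det g})^{-1}\circ\psi)\,(X^i\circ\psi)$, invoke the Euclidean structure theorem, and patch with a partition of unity---is exactly what is intended (the paper gives no argument beyond saying the lemma follows from Theorem~\ref{thm:finite_per_man}), and you are right that this substitution is clean, with no leftover terms from differentiating the weight.

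The gap is in step (4). Carrying out your own substitution and comparing the two structure theorems gives, with $\tilde u := u\circ\psi^{-1}$,
\begin{equation*}
\nu_{\tilde u}\,|D\tilde u| = \frac{g\,\sigma_u}{(\xi\circ\psi^{-1})\sqrt{\det g}}\;\psi_\#|Du|_\xi,
\qquad\text{hence}\qquad
\psi_\#|Du|_\xi = \frac{(\xi\circ\psi^{-1})\sqrt{\det g}}{|g\,\sigma_u|_{\mathbf{R}^k}}\;|D\tilde u|,
\end{equation*}
where $(g\sigma_u)_j = g_{ij}\sigma_u^i$. Since $\sigma_u$ is a unit vector in the \emph{Riemannian} metric, the \emph{Euclidean} norm $|g\sigma_u|_{\mathbf{R}^k}$ depends on the direction of the interface, so the two perimeter measures are in general not related by a scalar density. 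In particular you cannot simply read off $\gamma = (\xi\circ\psi^{-1})\sqrt{\det g^{ij}}$: it is not what the comparison produces, nor is it the Jacobian relating $d\mu$ to $dx$ (that is $(\xi\circ\psi^{-1})\sqrt{\det g_{ij}}$, with \emph{lower} indices). A counterexample to the scalar formula: take $M=\mathbf{R}^2$, $g=\operatorname{diag}(a^2,b^2)$ constant, $\xi\equiv 1$, $u=\chi_{\{x_1<0\}}$, identity chart; a direct computation gives $\psi_\#|Du|_\xi = b\,|D\tilde u|$, so the density is $b$, whereas $\sqrt{\det g^{ij}}=(ab)^{-1}$ and $\sqrt{\det g_{ij}}=ab$. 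The stated $\gamma$ is correct precisely at points where $g_{ij}=\delta_{ij}$ (then $|g\sigma_u|_{\mathbf{R}^k}=1$), which is the only situation in which the paper actually uses the lemma: it is invoked in normal coordinates centred at a reduced-boundary point $\underline x$, and only through the value $\gamma(\underline o)=\xi(\underline x)$ at the centre (Step 2 of the proof of Theorem~\ref{thm:nonlocloc}, Remark~\ref{rem:red_bdry}). To make step (4) rigorous, derive the direction-dependent identity above and then explain why it collapses to the stated scalar $\gamma$ at the centre of a normal chart, rather than asserting the scalar formula for an arbitrary chart.
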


\begin{remark}\label{rem:red_bdry}
For a set of finite perimeter $E \subset M$, we define its reduced boundary $\partial_M^* E$ as follows:

\begin{equation}\label{eq:red_bdry}
\partial_M^* E = \left\{ x \in M: \exists (V, \psi)\ \text{chart of}\ M\ \text{s.t.}\ \psi(x) \in \partial^*\psi(E)\right\},
\end{equation}
where $\partial^*\psi(E)$ is the reduced boundary of the set $\psi(E)$ in the usual Euclidean setting. From now on, we will also denote by $\partial^* E$ the set $\partial^*_M E$. One can check that:
\begin{itemize}
\item The definition is well posed.
\item $|D\chi_E|_{\xi}$ is concentrated on $\partial^* E$. In particular $|D\chi_E|_{\xi}$-a.e.\ point $x$ is in the reduced boundary of $E$.
\item If $x \in \partial^* E$, then in normal coordinates $(V, \psi)$ centered around $x$ we have that $\sigma_{\chi_E}(x) = \nu_{\psi(E)}(\underline{o})$, where $\nu_{\psi(E)}$ is the measure theoretic inner unit normal for $\psi(E)$ and $\underline{o}$ are the coordinates for the center of the chart $x$.
\item If $E, F \subset M$ are sets of finite perimeter, then it holds that for $|D\chi_F|_{\xi}$-a.e.\ point $x \in \partial^*E \cap \partial^* F$ we have $\sigma_E(x) = \langle \sigma_E(x), \sigma_F(x) \rangle_x \sigma_F(x)$.
\end{itemize}
\end{remark}
We also record the following elementary lemma, which can be proved by using Lemma~\ref{lemma:chart_fp} and the analogous statement in the Euclidean setting.
\begin{lemma}\label{lem:compactness_bv}
Let $u_n$ be a sequence of functions in $BV(M)$ such that
\begin{equation*}
\sup_{n \in \mathbf{N}} \int_M |Du_n|_{\xi} < +\infty.
\end{equation*}
Then $\{u_n\}$ is precompact in $L^1(M)$ and every limit point is in $BV(M)$.
\end{lemma}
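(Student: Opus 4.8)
\emph{Proof proposal.} The plan is to reduce to the classical compactness theorem for $BV$ functions on Euclidean balls by working in a finite atlas and transferring the total variation bound chart by chart via Lemma~\ref{lemma:chart_fp}. We may and do assume in addition that $\sup_n \|u_n\|_{L^1(M)} < +\infty$; this holds in all the applications in this paper, where the $u_n$ take values in $[0,1]$, and it is in any case necessary for the conclusion.

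First I would exploit compactness of $M$ to fix a finite atlas $(V_i,\psi_i)_{i=1}^m$ together with open sets $W_i$ with $\overline{W_i}\subset V_i$ and $\psi_i(W_i)=B_i$ a Euclidean ball, still covering $M$, and a partition of unity $\{\varphi_i\}$ subordinate to $\{W_i\}$. For each $i$, Lemma~\ref{lemma:chart_fp} gives $u_n\circ\psi_i^{-1}\in BV(\psi_i(V_i))$ with $|D(u_n\circ\psi_i^{-1})| = \gamma_i^{-1}\,(\psi_i)_\#|Du_n|_\xi$, where $\gamma_i$ is the smooth positive density from that lemma; since $\gamma_i$ is bounded above and below on the compact set $\overline{B_i}$, one gets $\|u_n\circ\psi_i^{-1}\|_{L^1(B_i)} + |D(u_n\circ\psi_i^{-1})|(B_i)\le C$ uniformly in $n$, using the two assumed bounds. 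By the compact embedding $BV(B_i)\hookrightarrow\hookrightarrow L^1(B_i)$ and a diagonal argument over the finitely many $i$, I would extract a subsequence (not relabeled) with $u_n\circ\psi_i^{-1}\to v_i$ in $L^1(B_i)$ for every $i$. Because the transition maps $\psi_j\circ\psi_i^{-1}$ are diffeomorphisms, the limits are compatible on overlaps, i.e.\ $v_i\circ\psi_i=v_j\circ\psi_j$ a.e.\ on $W_i\cap W_j$, so they glue to a well-defined $u\in L^1(M)$ with $u=v_i\circ\psi_i$ a.e.\ on $W_i$. Finally $\|u_n-u\|_{L^1(M)}\le\sum_i\int_M\varphi_i|u_n-u|\,d\mu\le\sum_i\|\gamma_i\|_{L^\infty(\overline{B_i})}\,\|u_n\circ\psi_i^{-1}-v_i\|_{L^1(B_i)}\to0$, which proves precompactness in $L^1(M)$.

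For the statement that every limit point is in $BV(M)$, I would use lower semicontinuity of the total variation directly from its definition: if $u_{n_k}\to w$ in $L^1(M)$, then for any $Y\in\Gamma(TM)$ with $|Y|\le1$ the function $\divv_\xi Y$ is smooth, hence bounded on the compact manifold $M$, so $\int_M w\,\divv_\xi Y\,d\mu=\lim_k\int_M u_{n_k}\,\divv_\xi Y\,d\mu\le\liminf_k|Du_{n_k}|_\xi(M)$; taking the supremum over such $Y$ yields $|Dw|_\xi(M)\le\liminf_k|Du_{n_k}|_\xi(M)\le\sup_n|Du_n|_\xi(M)<+\infty$, so $w\in BV(M)$.

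The only genuinely non-routine points are (a) arranging that the chart images on which one invokes the Euclidean compactness are bounded domains with the $BV$-extension property, which is why one shrinks to balls $B_i\Subset\psi_i(V_i)$, and (b) checking that the chart-wise $L^1$-limits glue to a single function on $M$, which requires the overlap argument above; both are standard bookkeeping. The lower semicontinuity step is immediate once $M$ is compact, since $\divv_\xi Y$ is then bounded for every admissible $Y$.
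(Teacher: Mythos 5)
Your proof is correct and follows exactly the route the paper sketches for this lemma (the paper only says it "can be proved by using Lemma~\ref{lemma:chart_fp} and the analogous statement in the Euclidean setting"): transfer the total variation bound to charts via Lemma~\ref{lemma:chart_fp}, invoke Euclidean $BV$-compactness on balls, extract a diagonal subsequence and glue the chart-wise limits, and close with lower semicontinuity of the total variation against smooth vector fields. You are also right to flag that, as stated, the lemma implicitly needs a uniform $L^1(M)$ bound on $u_n$ (otherwise $u_n\equiv n$ is a counterexample); in every use in the paper the $u_n$ take values in a bounded set, so this is harmless, but the hypothesis should really be included.
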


\subsection{Transportation distance}
Here we recall the definition of $TL^p$-convergence introduced in~\cite{GarciaTrillos2016} and we introduce the notion of weak $TL^p$-convergence. Let $(M, g)$ be a $k$-dimensional compact Riemannian manifold. For a fixed $1 \le p < \infty$ let $\mu, \nu \in \mathcal{P}(M)$, $u \in L^p(\mu), v \in L^p(\nu)$, we set

\begin{equation}
d_{TL^p}((\mu, u), (\nu, v)) := \inf_{\pi \in \Gamma(\mu, \nu)} \left\{ \left(\int_{M \times M} d_M^p(x,y) + |u(x) - u(y)|^p d\pi\right)^{1/p}\right\},\nonumber
\end{equation}
where the infimum is taken over the space of couplings between $\mu$ and $\nu$, which we denote by $\Gamma(\mu, \nu)$. For $p = \infty$,  $\mu, \nu \in \mathcal{P}(M)$, $u \in L^p(\mu), v \in L^p(\nu)$ we set

\begin{equation}
d_{TL^{\infty}}((\mu, u), (\nu, v)) := \inf_{\pi \in \Gamma(\mu, \nu)} \left\{ \operatorname{ess sup}_{x, y \in M}\left( d_M(x, y) + |u(x) - u(y)| \right)\right\}.\nonumber
\end{equation}
We call $d_{TL^p}$ the $TL^p$-metric. It can be shown that $d_{TL^p}$ is a metric on

\begin{equation}
\mathcal{L}^p := \left\{ (\mu, u):\ \mu \in \mathcal{P}(M),\ u \in L^p(\mu)\right\},\nonumber
\end{equation}
this is done in\cite{GarciaTrillos2016} for the Euclidean case, the case of a compact manifold is analogous. Let $\{\pi_n\}_{n} \subset \Gamma(\mu, \nu)$ be a sequence of transport plans between $\mu$ and $\nu$, we say that the these are \emph{$p$-stagnating} if

\begin{equation}
\lim_{n \to +\infty} \int_{M \times M} d_M^p(x, y) d\pi_n = 0.\nonumber
\end{equation}
Transport maps $T_n$ between $\mu, \nu \in \mathcal{P}(M)$ are said to be \emph{$p$-stagnating} if the corresponding transport plans $(Id \times T_n)_{\#}\mu$ are $p$-stagnating. The following propositions are straightforward generalizations of~\cite[Proposition 3.12]{GarciaTrillos2016} and~\cite[Proposition 2.6]{GarciaTrillos2018}.

\begin{proposition}\label{prop:equivalent_tl2}
Let $(\mu_n, u_n), (\mu, u) \in \mathcal{L}^p$, $n \in \mathbf{N}$, $1\le p < +\infty$. Assume that $\mu$ is absolutely continuous with respect to $\volm$. Then the following are equivalent:
\begin{enumerate}[(i)]
\item $(\mu_n, u_n) \to (\mu, u)$ in the $TL^p$ sense.
\item For every sequence of $p$-stagnating transport maps $T_n$ we have
\begin{equation}
\lim_{n \to +\infty} \int_{M} |u_n(T_n(x)) -u(x)|^p d\mu(x) = 0.\nonumber
\end{equation}
\item There exists a sequence of $p$-stagnating transport maps $T_n$ such that
\begin{equation}
\lim_{n \to +\infty} \int_{M} |u_n(T_n(x)) - u(x)|^p d\mu(x) = 0.\nonumber
\end{equation}
\end{enumerate}
\end{proposition}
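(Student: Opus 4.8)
The plan is to prove the cyclic chain of implications $(iii)\Rightarrow(i)\Rightarrow(ii)\Rightarrow(iii)$, following the template of \cite[Proposition 3.12]{GarciaTrillos2016} and checking that nothing in that Euclidean argument is obstructed by the passage to a compact manifold. The only inputs one really needs beyond bookkeeping are the existence of optimal transport maps out of an absolutely continuous source measure (McCann's theorem, legitimate since $\mu\ll\volm$) and the uniform continuity of continuous functions on the compact set $M$. As a preliminary I would record that, since $\mu\ll\volm$, whenever $W_p(\mu_n,\mu)\to 0$ — which follows from (i), and which, as in \cite{GarciaTrillos2016}, should be read into (ii) and (iii) so that the quantifiers there are not vacuous — there is an optimal map $T_n\colon M\to M$ with $(T_n)_\#\mu=\mu_n$, and this sequence is $p$-stagnating because $\int_M d_M^p(x,T_n(x))\,d\mu=W_p(\mu,\mu_n)^p\to 0$. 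In particular $p$-stagnating sequences exist, so $(ii)\Rightarrow(iii)$ is immediate.

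For $(iii)\Rightarrow(i)$ I would simply feed the plan $\pi_n:=(\mathrm{Id}\times T_n)_\#\mu\in\Gamma(\mu,\mu_n)$ into the definition of $d_{TL^p}$:
\[
d_{TL^p}\big((\mu,u),(\mu_n,u_n)\big)^p\le \int_M d_M^p(x,T_n(x))\,d\mu(x)+\int_M|u(x)-u_n(T_n(x))|^p\,d\mu(x),
\]
where the first term vanishes in the limit by $p$-stagnation and the second by hypothesis.

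The core is $(i)\Rightarrow(ii)$, and this is where I expect the actual work. Given an arbitrary $p$-stagnating sequence $T_n$, I would pick couplings $\sigma_n\in\Gamma(\mu,\mu_n)$ nearly realizing $d_{TL^p}\big((\mu,u),(\mu_n,u_n)\big)$, disintegrate both $\sigma_n$ and $\pi_n=(\mathrm{Id}\times T_n)_\#\mu$ over their common marginal $\mu_n$, and glue them into a measure $\gamma_n$ on $M\times M\times M$, with coordinates $(x,x',y)$, whose $(x,y)$-marginal is $\pi_n$ and whose $(x',y)$-marginal is $\sigma_n$; note both the $x$- and $x'$-marginals of $\gamma_n$ then equal $\mu$. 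From $|u_n(y)-u(x)|^p\le 2^{p-1}\big(|u_n(y)-u(x')|^p+|u(x')-u(x)|^p\big)$ and the fact that $|u_n(T_n(x))-u(x)|^p$ depends only on $(x,y)$, I would get
\[
\int_M|u_n(T_n(x))-u(x)|^p\,d\mu\le 2^{p-1}\!\int_{M\times M}\!|u_n(y)-u(x')|^p\,d\sigma_n+2^{p-1}\!\int_{M^3}\!|u(x)-u(x')|^p\,d\gamma_n .
\]
The first term tends to $0$ by the choice of $\sigma_n$. For the second, the triangle inequality $d_M(x,x')\le d_M(x,T_n(x))+d_M(x',y)$ together with $p$-stagnation of $T_n$ and the choice of $\sigma_n$ gives $\int_{M^3}d_M^p(x,x')\,d\gamma_n\to 0$; then, since $u\in L^p(\mu)$ and the two marginals in play are $\mu$, I would approximate $u$ by $g\in C(M)$ in $L^p(\mu)$, use uniform continuity of $g$ on the compact $M$ plus Chebyshev's inequality to bound $\int_{M^3}|g(x)-g(x')|^p\,d\gamma_n$, and close with a standard $3\varepsilon$ estimate. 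This yields $u_n\circ T_n\to u$ in $L^p(\mu)$, i.e. (ii).

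The main obstacle is the disintegration-and-gluing construction of $\gamma_n$: one must check that the conditional measures are well defined (fine, $M$ is Polish) and that the glued measure has exactly the two marginals one wants; after that, the triangle inequality for $d_M$ and the density of $C(M)$ in $L^p(\mu)$ make the rest routine. No genuinely manifold-specific difficulty appears beyond invoking compactness of $M$ (uniform continuity, applicability of McCann's theorem) and the hypothesis $\mu\ll\volm$.
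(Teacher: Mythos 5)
The paper does not supply a proof of this proposition; it simply asserts it is a straightforward generalization of \cite[Proposition 3.12]{GarciaTrillos2016}. Your sketch is a correct and faithful transcription of that Euclidean argument to the compact-manifold setting — the cycle $(iii)\Rightarrow(i)\Rightarrow(ii)\Rightarrow(iii)$, the gluing of $\pi_n=(\mathrm{Id}\times T_n)_{\#}\mu$ with a near-optimal coupling $\sigma_n$ over the common marginal $\mu_n$, the approximation of $u$ by $g\in C(M)$ plus uniform continuity and Chebyshev — and you correctly identify the only manifold-specific inputs (compactness of $M$, $\mu\ll\volm$ for McCann) as well as the need to read existence of $p$-stagnating maps into (ii) and (iii) so those statements are not vacuous.
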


\begin{proposition}\label{prop:inner_pr}
Suppose that $(\mu_n, u_n) \to (\mu, u)$ in $TL^2(M)$ and $(\mu_n, v_n) \to (\mu, v)$ in $TL^2(M)$. Then
\begin{equation}
\lim_{n\to \infty} \langle u_n, v_n \rangle_{L^2(\mu_n)} = \langle u, v \rangle_{L^2(\mu)}.\nonumber
\end{equation}
\end{proposition}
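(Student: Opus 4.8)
The plan is to reduce everything to the pushforward-via-transport-maps characterization of $TL^2$-convergence given by Proposition \ref{prop:equivalent_tl2}, and then exploit the polarization/Cauchy--Schwarz structure of the $L^2$ inner product. First I would recall that since $(\mu_n,u_n)\to(\mu,u)$ and $(\mu_n,v_n)\to(\mu,v)$ in $TL^2(M)$, the first components agree, so we are really dealing with a single sequence of measures $\mu_n$ converging to an absolutely continuous $\mu$; hence there exists a common sequence of $2$-stagnating transport maps $T_n$ with $(T_n)_\#\mu=\mu_n$. By Proposition \ref{prop:equivalent_tl2}(ii), applied to both sequences with this same $T_n$, we get
\begin{equation*}
\int_M |u_n(T_n(x))-u(x)|^2\,d\mu(x)\to 0,\qquad \int_M |v_n(T_n(x))-v(x)|^2\,d\mu(x)\to 0,
\end{equation*}
that is, $u_n\circ T_n\to u$ and $v_n\circ T_n\to v$ \emph{strongly} in $L^2(\mu)$.

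Next I would rewrite the inner product on $L^2(\mu_n)$ as an inner product on $L^2(\mu)$ via the change of variables induced by $T_n$: since $(T_n)_\#\mu=\mu_n$,
\begin{equation*}
\langle u_n,v_n\rangle_{L^2(\mu_n)}=\int_M u_n(y)v_n(y)\,d\mu_n(y)=\int_M u_n(T_n(x))v_n(T_n(x))\,d\mu(x)=\langle u_n\circ T_n,\,v_n\circ T_n\rangle_{L^2(\mu)}.
\end{equation*}
Now the statement is just the continuity of the bilinear form $(\phi,\psi)\mapsto\langle\phi,\psi\rangle_{L^2(\mu)}$ on $L^2(\mu)\times L^2(\mu)$ with respect to strong $\times$ strong convergence, which is the elementary estimate
\begin{equation*}
\bigl|\langle u_n\circ T_n,v_n\circ T_n\rangle_{L^2(\mu)}-\langle u,v\rangle_{L^2(\mu)}\bigr|\le \|u_n\circ T_n-u\|_{L^2(\mu)}\|v_n\circ T_n\|_{L^2(\mu)}+\|u\|_{L^2(\mu)}\|v_n\circ T_n-v\|_{L^2(\mu)},
\end{equation*}
where $\|v_n\circ T_n\|_{L^2(\mu)}$ stays bounded because it converges to $\|v\|_{L^2(\mu)}$. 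Both terms on the right tend to $0$, which gives the claim.

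The only point needing a word of care — and the mildest obstacle here — is the existence of a single sequence of transport maps $T_n$ that is simultaneously $2$-stagnating and valid for both convergences; this is immediate because the measures $\mu_n$ are the same for both sequences, so any $2$-stagnating sequence $T_n$ with $(T_n)_\#\mu=\mu_n$ (which exists by the argument behind Proposition \ref{prop:equivalent_tl2}, using absolute continuity of $\mu$) works for both, and then part (ii) of that proposition upgrades $TL^2$-convergence to strong $L^2(\mu)$-convergence of the pullbacks. Everything else is the Cauchy--Schwarz bookkeeping above; no compactness or manifold geometry beyond Proposition \ref{prop:equivalent_tl2} is required.
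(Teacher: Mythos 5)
Your proof is correct, and it follows the same route the paper intends: the paper does not supply its own proof of Proposition~\ref{prop:inner_pr} but points to \cite{GarciaTrillos2016} and \cite{GarciaTrillos2018}, and the argument there is precisely what you wrote — fix a single $2$-stagnating sequence of transport maps $T_n$ (existence uses that $\mu$ is absolutely continuous, an assumption inherited from the setting and from Proposition~\ref{prop:equivalent_tl2}), upgrade both convergences to strong $L^2(\mu)$-convergence of the pullbacks $u_n\circ T_n$ and $v_n\circ T_n$ via Proposition~\ref{prop:equivalent_tl2}(ii), rewrite $\langle u_n,v_n\rangle_{L^2(\mu_n)}=\langle u_n\circ T_n,v_n\circ T_n\rangle_{L^2(\mu)}$ by the change of variables $(T_n)_\#\mu=\mu_n$, and finish by the Cauchy--Schwarz splitting together with the bound $\|v_n\circ T_n\|_{L^2(\mu)}\to\|v\|_{L^2(\mu)}$. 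No gap.
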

We will also make use of the following result, which can easily be derived from~\cite[Theorem 2]{GarciaTrillos2020}.

\begin{theorem}\label{thm:transp_plan_existence}
Let $M$ be a $k$-dimensional compact Riemannian submanifold of $\mathbf{R}^d$. Let $\rho \in C^{\infty}(M)$, $\rho > 0$ such that $\nu := \rho \volm \in \mathcal{P}(M)$. Let $\{X_i\}_{i \in \mathbf{N}}$ be iid random points in $M$ distributed according to $\nu$ and let $\nu_n := \frac{1}{n}\sum_{i=1}^n \delta_{X_i}$ be the associated empirical measures. Then there is a constant $C > 0$ such that almost surely there exist transport maps $T_n$ such that $(T_n)_{\#} \nu = \nu_n$ and

\begin{align}\label{eq:cond_transp_plans}
\begin{cases}
\limsup_{n \to +\infty} \frac{n^{1/2}\sup_{x \in M} d_M(x, T_n(x))}{log^{3/4}(n)} \le C \text{if}\ k = 2
\\ \limsup_{n \to +\infty} \frac{n^{1/k}\sup_{x \in M} d_M(x, T_n(x))}{log^{1/k}(n)} \le C \text{if}\ k \ge 3
\end{cases}
\end{align}
\end{theorem}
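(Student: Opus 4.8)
The plan is to read the statement off from the quantitative $L^{\infty}$-transport estimate of \cite[Theorem 2]{GarciaTrillos2020}. That theorem provides, for a probability measure $\nu = \rho\volm$ with $\rho$ smooth and positive on a compact manifold, Borel transport maps $T_n$ with $(T_n)_{\#}\nu = \nu_n$ together with a concentration bound for $\theta_n := \sup_{x \in M} d_M(x, T_n(x))$ of the form
\begin{equation*}
\mathbf{P}(\theta_n > C r_n) \le C n^{-c}, \qquad r_n := \begin{cases} \log^{3/4}(n)\, n^{-1/2}, & k = 2,\\ \log^{1/k}(n)\, n^{-1/k}, & k \ge 3, \end{cases}
\end{equation*}
for suitable constants $c, C > 0$; if the estimate is instead available only in expectation, Markov's inequality produces a polynomial tail of this type after enlarging $C$. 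First I would fix $C$ large enough that the resulting exponent satisfies $c > 1$, so that $\sum_{n} \mathbf{P}(\theta_n > C r_n) < \infty$.

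Then, by the Borel--Cantelli lemma, almost surely the event $\{\theta_n > C r_n\}$ occurs for only finitely many $n$. On the corresponding full-measure subset of $\Omega$ one therefore has $\theta_n \le C r_n$ for all sufficiently large $n$; since changing finitely many of the $T_n$ affects neither the push-forward constraint (for those indices one simply keeps whatever map is furnished by the existence part of \cite[Theorem 2]{GarciaTrillos2020}) nor the $\limsup$ in~\eqref{eq:cond_transp_plans}, this is precisely the claimed almost-sure bound, up to replacing $C$ by a larger constant.

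Two points deserve a little care. First, should \cite[Theorem 2]{GarciaTrillos2020} be formulated for a model space (a flat domain or a flat torus) rather than for an embedded submanifold, I would transfer it to $M$ through a finite atlas $\{(V_\alpha, \psi_\alpha)\}$ with each $\psi_\alpha$ bi-Lipschitz onto its image: partition $M$ into finitely many Borel pieces subordinate to $\{V_\alpha\}$, apply the model-space estimate to each pushed-forward conditional measure --- whose density is again smooth and bounded below, the change-of-variables Jacobian being smooth and positive on the relevant compact set --- glue the resulting maps, and absorb the bi-Lipschitz constants into $C$. Second, if the cited result produced only couplings $\pi_n$ concentrated on $\{(x,y): d_M(x,y) \le \theta_n\}$, one must upgrade them to genuine maps; this is possible because $\nu \ll \volm$ is non-atomic, so any such coupling can be approximated by a Borel map $T_n$ with $(T_n)_{\#}\nu = \nu_n$ and $\sup_{x} d_M(x, T_n(x)) \le \theta_n + o(1)$ by a measurable-selection and gluing argument. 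I expect this map-versus-coupling issue to be the only genuine obstacle; everything else is bookkeeping of exponents and constants, and in fact \cite[Theorem 2]{GarciaTrillos2020} is already phrased in terms of transport maps, so in practice only the Borel--Cantelli passage from a probabilistic bound to an almost-sure one is needed.
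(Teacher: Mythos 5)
Your approach matches the paper's: the paper does not give an explicit proof but simply states that the result ``can easily be derived from [Theorem 2]{GarciaTrillos2020},'' and your Borel--Cantelli expansion of that citation (together with the remark that the cited theorem is in fact already phrased in terms of transport maps on a manifold) is the natural way to fill in the gap. One small caveat: the parenthetical suggestion that an expectation bound plus Markov's inequality would yield a summable, $n$-dependent tail is not quite right --- a first-moment bound on $\theta_n/r_n$ gives only a tail that is small in the ratio $\theta_n/r_n$, not in $n$ --- but since the cited result provides a genuine concentration (indeed an almost-sure) estimate, that contingency is never invoked and the argument stands.
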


The correct notion of convergence for obtaining Theorem~\ref{thm:discretenonloc} is weak $TL^2$-convergence, because this is the topology in which we get $\Gamma$-compactness. More generally, let us introduce the notion of \textit{weak $TL^p$-convergence}. 

\begin{definition}\label{def:weaktlp}
Let $\mu$ be a probability measure on $M$ which is absolutely continuous with respect to the volume measure $\volm$, and let $u \in L^p(\mu)$. A sequence $(\mu_n, u_n) \in \mathcal{L}^p$ is said to \textit{converge weakly to $(\mu, u)$ in $TL^p$} if there exists a sequence of $q$-stagnating transport maps $T_n$ between $\mu$ and $\mu_n$ such that the functions $u_n \circ T_n$ converge weakly to $u$ in $L^p(\mu)$. Here, $q = \frac{p}{p-1}$ is the conjugate exponent for $p$. 
\end{definition}
We record the following useful result, which says that the previous definition is independent of the sequence of $q$-stagnating transport maps.

\begin{proposition}\label{prop:weak_tlp}
Let $1 < p < +\infty$ and $q = \frac{p}{p-1}$. Let $\mu$ be a probability measure on $M$ which is absolutely continuous with respect to the volume measure and let $u \in L^p(\mu)$. Assume that $(u_n, \mu_n) \in \mathcal{L}^p$ is a sequence converging weakly to $(u, \mu)$ in $TL^p$. Then for every sequence $S_n$ of $q$-stagnating transport maps between $\mu$ and $\mu_n$ the functions $u_n \circ S_n$ converge weakly to $u$ in $L^p(\mu)$.
\end{proposition}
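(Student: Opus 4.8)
The plan is a standard subsequence argument exploiting the reflexivity of $L^p(\mu)$ (recall $1<p<+\infty$). First I would record that since $S_n$ and $T_n$ both push $\mu$ forward to $\mu_n$, the change-of-variables formula yields
\begin{equation*}
\|u_n\circ S_n\|_{L^p(\mu)} = \|u_n\|_{L^p(\mu_n)} = \|u_n\circ T_n\|_{L^p(\mu)},
\end{equation*}
and the right-hand side is bounded in $n$ because $u_n\circ T_n$ converges weakly in $L^p(\mu)$ by hypothesis. Thus $\{u_n\circ S_n\}$ is bounded in the reflexive space $L^p(\mu)$, so every subsequence has a further weakly convergent subsequence; it therefore suffices to prove that every weak limit point of $\{u_n\circ S_n\}$ equals $u$, since a bounded sequence in a reflexive space all of whose weak limit points coincide converges weakly to the common value.

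So suppose, along a subsequence which I do not relabel, that $u_n\circ S_n\rightharpoonup w$ in $L^p(\mu)$; the goal is $w=u$. I would test against an arbitrary $\varphi\in C^\infty(M)$, which is Lipschitz because $M$ is compact. Since $u_n\circ S_n$ and $u_n\circ T_n$ live on the same space $M$ but are genuinely different functions, the comparison must be made after transplanting everything onto the measure $\mu_n$; crucially the transport maps need not be invertible, so one transplants the test function rather than pushing forward $u_n$. Writing $\varphi = \varphi\circ S_n + (\varphi - \varphi\circ S_n)$ and using $(S_n)_\#\mu=\mu_n$,
\begin{equation*}
\int_M (u_n\circ S_n)\,\varphi\,d\mu = \int_M u_n\,\varphi\,d\mu_n + \int_M (u_n\circ S_n)\,(\varphi - \varphi\circ S_n)\,d\mu ,
\end{equation*}
and the last integral is bounded in absolute value by $\|u_n\circ S_n\|_{L^p(\mu)}\operatorname{Lip}(\varphi)\big(\int_M d_M^q(x,S_n(x))\,d\mu\big)^{1/q}$ (by H\"older's inequality and $|\varphi(x)-\varphi(S_n(x))|\le\operatorname{Lip}(\varphi)\,d_M(x,S_n(x))$), which tends to $0$ since $S_n$ is $q$-stagnating. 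The identical computation with $T_n$ in place of $S_n$ gives $\int_M (u_n\circ T_n)\varphi\,d\mu = \int_M u_n\varphi\,d\mu_n + o(1)$, so subtracting yields $\int_M (u_n\circ S_n)\varphi\,d\mu - \int_M (u_n\circ T_n)\varphi\,d\mu\to 0$.

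Letting $n\to\infty$ along the chosen subsequence, the first term converges to $\int_M w\varphi\,d\mu$ and the second to $\int_M u\varphi\,d\mu$ by the hypothesis on $T_n$, whence $\int_M w\varphi\,d\mu = \int_M u\varphi\,d\mu$ for every $\varphi\in C^\infty(M)$. Since $\mu$ is a finite Borel measure on the compact manifold $M$, $C^\infty(M)$ is dense in $L^q(\mu)$, so $w=u$ $\mu$-a.e., which completes the proof. I expect the only delicate point to be the bookkeeping in this passage through $\mu_n$ --- in particular keeping track of the non-invertibility of the transport maps, which forces one to move the test function rather than the functions $u_n$ --- together with the elementary density of smooth functions in $L^q(\mu)$; everything else is routine.
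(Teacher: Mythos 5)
Your proof is correct and follows essentially the same route as the paper: bound $\|u_n\circ S_n\|_{L^p(\mu)}$ by the push-forward identity, extract a weak limit $w$, and identify $w=u$ by testing against smooth $\varphi$ and using the $q$-stagnating property of both $T_n$ and $S_n$ together with H\"older to compare each pairing with $\int_M u_n\varphi\,d\mu_n$. You are only slightly more explicit than the paper in writing out the decomposition $\varphi=\varphi\circ S_n+(\varphi-\varphi\circ S_n)$ and in invoking the subsequence principle and density, but the argument is the same.
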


\begin{proof}
We let $T_n$ the sequence of $q$-stagnating transport maps as in Definition \ref{def:weaktlp}. Let $S_n$ be an arbitrary sequence of $q$-stagnating transport maps between $\mu$ and $\mu_n$. Observe that $\Vert u_n \circ S_n \Vert_{L^p(\mu)} = \Vert u_n \Vert_{L^p(\mu_n)} = \Vert u_n \circ T_n \Vert_{L^p(\mu)}$. In particular, the sequence $u_n \circ S_n$ is bounded in $L^p(\mu)$ and thus, up to extracting a subsequence, we may assume that it converges weakly to a limit $v \in L^p(\mu)$. We need to show that $v = u$. To do so, pick $\varphi \in C^{\infty}(M)$ and observe that by H\"{o}lder's inequality

\begin{align*}
&\left| \int_M u_n \circ T_n \varphi d\mu - \int_M u_n \circ T_n \varphi_{|V_n}\circ T_n d\mu \right| 
\\ &\le \left( \int_M |u_n \circ T_n |^p d\mu\right)^{1/p} \left( \int_M (\operatorname{Lip}\varphi)^q d_M(x, T_n(x))^qd\mu\right)^{1/q},\nonumber
\end{align*}
a similar estimate holds true with $T_n$ replaced by $S_n$. This clearly implies that

\begin{align*}
&\left| \int_M u_n \circ T_n \varphi d\mu - \int_M u_n \circ S_n \varphi d\mu \right| 
\\ & \le C \left( \int_M d_M(x, T_n(x))^qd\mu\right)^{1/q} + C\left( \int_M d_M(x, S_n(x))^qd\mu\right)^{1/q}\nonumber
\end{align*}
The right hand side converges to zero as $n \to +\infty$ because the sequences of transport maps are $q$-stagnating.
\end{proof}

Finally, we have the following natural improvement of Proposition~\ref{prop:inner_pr}.

\begin{proposition}\label{prop:weak_strong}
Let $1 \le p < +\infty$. Let $\mu$ be a probability measure on $M$ which is absolutely continuous with respect to the volume element. Assume that $(u_n, \mu_n) \in \mathcal{L}^p$ converges weakly in $TL^p$ to $(u, \mu)$ and that $(v_n, \mu_n)$ is a sequence in $\mathcal{L}^q$ converging strongly in $TL^q$ to $(v, \mu)$, where $q$ is the conjugate exponent of $p$. Then 
\begin{equation}
\lim_{n\to +\infty} \int_M u_n v_n d\mu_n = \int_M u v d\mu.\nonumber
\end{equation}
\end{proposition}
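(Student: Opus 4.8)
\emph{Proof proposal.} The strategy is to transport everything back to the fixed base measure $(M,\mu)$ along a single common sequence of transport maps, and then invoke the elementary duality between weak $L^p$-convergence and strong $L^q$-convergence. I will present the argument for $1<p<+\infty$; the case $p=1$ (where $q=\infty$) is analogous and in fact simpler, since $q$-stagnating maps are then uniformly close to the identity.

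By Definition~\ref{def:weaktlp}, since $(u_n,\mu_n)$ converges weakly to $(u,\mu)$ in $TL^p$, there is a sequence $T_n$ of $q$-stagnating transport maps with $(T_n)_\#\mu=\mu_n$ such that $u_n\circ T_n\rightharpoonup u$ weakly in $L^p(\mu)$; in particular $\sup_n\|u_n\circ T_n\|_{L^p(\mu)}=\sup_n\|u_n\|_{L^p(\mu_n)}<+\infty$. Because this same sequence is $q$-stagnating and $(v_n,\mu_n)\to(v,\mu)$ strongly in $TL^q$, Proposition~\ref{prop:equivalent_tl2} (applied with the exponent $q$ in place of $p$, using that $\mu$ is absolutely continuous with respect to $\volm$) yields $v_n\circ T_n\to v$ strongly in $L^q(\mu)$. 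Using the change of variables induced by $(T_n)_\#\mu=\mu_n$, we rewrite
\[
\int_M u_n v_n\,d\mu_n=\int_M (u_n\circ T_n)(v_n\circ T_n)\,d\mu,
\]
and decompose
\[
\int_M (u_n\circ T_n)(v_n\circ T_n)\,d\mu-\int_M uv\,d\mu
=\int_M (u_n\circ T_n)(v_n\circ T_n-v)\,d\mu+\int_M (u_n\circ T_n-u)\,v\,d\mu.
\]
By H\"older's inequality the first term is bounded by $\|u_n\circ T_n\|_{L^p(\mu)}\,\|v_n\circ T_n-v\|_{L^q(\mu)}$, which tends to $0$ by the uniform bound above together with the strong convergence $v_n\circ T_n\to v$. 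The second term tends to $0$ because $v\in L^q(\mu)$ is a fixed test function and $u_n\circ T_n\rightharpoonup u$ weakly in $L^p(\mu)$. This gives the claim.

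There is no real obstacle here: the only point requiring a little care is that the transport maps witnessing the weak $TL^p$-convergence of $(u_n,\mu_n)$ may simultaneously be used to detect the strong $TL^q$-convergence of $(v_n,\mu_n)$, and this is exactly the content of Proposition~\ref{prop:equivalent_tl2}, namely that \emph{every} sequence of $q$-stagnating transport maps realizes the strong convergence. Everything else is the standard weak-times-strong passage to the limit.
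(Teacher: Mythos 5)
Your argument for $1 < p < +\infty$ is correct and is the natural one: take the $q$-stagnating transport maps $T_n$ witnessing the weak $TL^p$-convergence of $(u_n,\mu_n)$, observe via Proposition~\ref{prop:equivalent_tl2} (applied with exponent $q$, which requires $q<+\infty$) that the \emph{same} maps realize the strong $TL^q$-convergence of $(v_n,\mu_n)$, change variables, and run the standard weak-times-strong estimate with the $L^p$-bound supplied by weak convergence. The paper states this proposition without proof, so there is nothing to compare against; this is the expected argument, and it covers the case $p=q=2$ that the paper actually uses.

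The parenthetical dismissal of $p=1$, however, is not justified. The step that converts strong $TL^q$-convergence into convergence along your chosen maps $T_n$ is precisely Proposition~\ref{prop:equivalent_tl2}, and for $q=+\infty$ the analogue of item (ii) there is simply false: take $\mu_n=\mu$, $v_n=v$ a discontinuous indicator, and $T_n$ a small rigid motion with $\sup_x d_M(x,T_n(x))\to 0$; then $(v_n,\mu_n)\to(v,\mu)$ trivially in $TL^\infty$, the $T_n$ are $\infty$-stagnating, yet $\|v\circ T_n - v\|_{L^\infty(\mu)}$ stays equal to $1$. So the first term in your decomposition does not vanish with this choice of $T_n$, and the case $p=1$ is not ``analogous and in fact simpler'' --- it would require a genuinely different argument (for instance, transporting along the maps adapted to $v_n$ and establishing weak $L^1$-convergence of $u_n\circ S_n$ along those, which an $L^1$-bound alone does not yield). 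If you only intend to prove the statement for $1<p<+\infty$, say so and drop the misleading remark; as written it suggests a complete proof of the proposition as stated.
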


\subsection{Asymptotics for the graph Laplacian and for the degrees}

Here we recall the results about the convergence of the graph Laplacian contained in~\cite{Coifman2006} and~\cite{Hein2007}. To do so, we need to introduce some definitions. We write $\eta_{\epsilon}(t) := \frac{1}{\epsilon^k} \eta(\frac{t}{\epsilon})$, and we introduce the function $k_{\epsilon}: M \times M \to \mathbf{R}$ defined as $k_{\epsilon}(x,x) = 0,\ x \in M$ and

\begin{equation}
k_{\epsilon}(x,y) = \eta_{\epsilon}(| x - y |_d),\nonumber
\end{equation}
where $|\cdot|_{d}$ denotes the standard Euclidean norm in the ambient space $\mathbf{R}^d$. Observe that 

\begin{equation}\label{eq:unif_deg}
\lim_{\epsilon \downarrow 0} \int_M k_{\epsilon}(x, y) d\nu(y) = C_1\rho(x)\quad \text{uniformly in}\ x \in M.
\end{equation}
Define also 

\begin{equation}
d^{(n, \epsilon)}(x) = \frac{1}{n} \sum_{i=1}^n k_{\epsilon}(x, X_i),\ x \in M,\nonumber
\end{equation}
so that the weights and the degrees on the graph $G_{n, \epsilon_n}$ can be expressed via

\begin{align*}
w_{ij}^{(n, \epsilon)} = k_{\epsilon}(X_i, X_j),\  d^{(n, \epsilon)}_i = d^{(n, \epsilon)}(X_i).
\end{align*}
In this way the random walk graph Laplacian may be extended to an operator acting on functions $f \in C^{\infty}(M)$ as

\begin{equation}
\Delta_{n, \epsilon}f(x) = \frac{1}{\epsilon^2}\left( f(x) - \sum_{j=1}^n\frac{k_{\epsilon}(x, X_j)f(X_j)}{nd^{n, \epsilon}(x)} \right).\nonumber
\end{equation}
Finally, for any $\epsilon > 0$ and any $f \in C^{\infty}(M)$ we define

\begin{equation}
\Delta_{\epsilon}f(x) = \frac{1}{\epsilon^2}\left( f(x) - \frac{\int_M k_{\epsilon}(x, y)f(y)d\nu(y)}{\int_M k_{\epsilon}(x, y) d\nu(y)}\right).\nonumber
\end{equation}
The following theorem is contained in Coifman and Lafon~\cite{Coifman2006}.
\begin{theorem}
Let the assumptions on $M$ in Theorem~\ref{thm:discretenonloc} be in place. For $K \in \mathbf{R}$ define
\begin{equation}
E_K = \{ f \in C^{\infty}(M):\ \Vert f \Vert_{C^3} \le K \}.\nonumber
\end{equation}
Then uniformly in $x$ and uniformly on $E_K$ we have
\begin{equation}
\Delta_{\epsilon}f(x) = \frac{C_2}{2C_1} \Delta_{\rho^2}f(x) + o(\epsilon).\nonumber
\end{equation}
\end{theorem}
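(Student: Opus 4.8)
The plan is to reduce the statement to a second-order asymptotic expansion of the kernel-averaging integral and then pass to a quotient. Writing $\eta_\epsilon(t)=\epsilon^{-k}\eta(t/\epsilon)$ as in the excerpt, and for $g\in C^\infty(M)$ setting
\[
\Phi_\epsilon[g](x):=\int_M \eta_\epsilon(|x-y|_d)\,g(y)\,d\volm(y),
\]
one has, since $\nu=\rho\volm$,
\[
\Delta_\epsilon f(x)=\frac{1}{\epsilon^2}\left(f(x)-\frac{\Phi_\epsilon[f\rho](x)}{\Phi_\epsilon[\rho](x)}\right).
\]
By \eqref{eq:unif_deg} we have $\Phi_\epsilon[\rho](x)\to C_1\rho(x)$ uniformly on $M$, and $\inf_M\rho>0$, so the denominator is bounded away from $0$ for all small $\epsilon$ and the quotient is well defined; the whole argument then rests on an expansion of $\Phi_\epsilon[g]$.

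The core claim is the existence of a fixed $\omega\in C^\infty(M)$ — assembled from the intrinsic curvature of $M$ and the second fundamental form of $M\subset\mathbf{R}^d$ — such that, for $g\in C^\infty(M)$,
\[
\Phi_\epsilon[g](x)=C_1\,g(x)+\epsilon^2\Bigl(\tfrac{C_2}{2}\,\divv\nabla g(x)+C_2\,\omega(x)\,g(x)\Bigr)+O(\epsilon^3),
\]
uniformly in $x\in M$, with the $O(\epsilon^3)$ controlled by $\|g\|_{C^3}$; here $\divv\nabla$ is the unweighted Laplace--Beltrami operator. I would prove this by fixing $x$, passing to geodesic normal coordinates centred at $x$, substituting $y=\exp_x(\epsilon z)$, and Taylor expanding the three $\epsilon$-dependent factors in the integrand: the value $g(\exp_x(\epsilon z))$; the kernel argument, using $|x-\exp_x(v)|_d^2=|v|^2+O(|v|^4)$ with the quartic term governed by the second fundamental form at $x$; and the Riemannian volume density $\sqrt{\det g_{ij}}=1-\tfrac16\operatorname{Ric}_{ij}(x)\,z^iz^j+O(|z|^3)$. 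Since $\eta$ is radial, all odd moments in $z$ vanish, while $\int_{\mathbf{R}^k}\eta(|z|)\,z_iz_j\,dz=C_2\,\delta_{ij}$: the second-moment term from the Taylor expansion of $g$ yields exactly $\tfrac{C_2}{2}\divv\nabla g(x)$, and the remaining (even) corrections coming from the distance distortion and from the volume density produce only a zeroth-order term $C_2\,\omega(x)\,g(x)$ — crucially with $\omega$ independent of $g$, as no $\nabla g$-term survives at order $\epsilon^2$. Finiteness of all the moments and control of the tails follow from the exponential decay of $\eta,\eta',\eta''$. This geometric bookkeeping — checking that the constants come out as $C_1$ and $\tfrac{C_2}{2}$ and that the curvature enters only at zeroth order — is the main technical obstacle; it is precisely the computation of Coifman and Lafon~\cite{Coifman2006} (see also~\cite{Hein2007}), here in the submanifold setting with the random-walk normalization.

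Granting the expansion, I would conclude by applying it with $g=f\rho$ and with $g=\rho$ and dividing. Since $\omega(x)(f\rho)(x)-f(x)\,\omega(x)\,\rho(x)=0$, the curvature terms cancel, and
\begin{align*}
\frac{\Phi_\epsilon[f\rho](x)}{\Phi_\epsilon[\rho](x)}
&=f(x)+\frac{\epsilon^2 C_2}{2C_1\rho(x)}\bigl(\divv\nabla(f\rho)-f\,\divv\nabla\rho\bigr)(x)+O(\epsilon^3)\\
&=f(x)+\frac{\epsilon^2 C_2}{2C_1}\Bigl(\divv\nabla f+\tfrac{2}{\rho}\langle\nabla f,\nabla\rho\rangle\Bigr)(x)+O(\epsilon^3),
\end{align*}
where the second line uses the Leibniz rule $\divv\nabla(f\rho)=\rho\,\divv\nabla f+2\langle\nabla f,\nabla\rho\rangle+f\,\divv\nabla\rho$. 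Since $\divv\nabla f+\tfrac{2}{\rho}\langle\nabla f,\nabla\rho\rangle=\tfrac{1}{\rho^2}\divv(\rho^2\nabla f)=-\Delta_{\rho^2}f$ by the very definition of the weighted Laplacian with weight $\xi=\rho^2$, substitution into the formula for $\Delta_\epsilon f$ gives
\[
\Delta_\epsilon f(x)=\frac{C_2}{2C_1}\Delta_{\rho^2}f(x)+O(\epsilon),
\]
which in particular yields the claimed convergence; for each fixed $f\in C^\infty(M)$ the remainder is in fact $O(\epsilon^2)$ — obtained by carrying the expansion one order further, which is legitimate as $f$ and $\rho$ are smooth — recovering the statement exactly. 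All error terms are uniform in $x\in M$, because every geometric quantity entering the expansion is bounded on the compact manifold, and uniform over $E_K$, because the remainders were estimated through $\|f\|_{C^3}$ and $\|\rho\|_{C^3}$.
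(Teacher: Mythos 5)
Your proposal is correct and reconstructs the Coifman--Lafon expansion, which is precisely what the paper cites for this statement: the paper provides no proof of its own but refers to~\cite{Coifman2006}. In particular, the structural point you isolate --- that the curvature correction $\omega$ in the second-order kernel expansion is independent of the integrand $g$ and therefore cancels when forming the quotient defining $\Delta_\epsilon$, leaving exactly $\Delta_{\rho^2}$ with the constant $C_2/(2C_1)$ --- is what drives the cited computation, so your argument is the same approach with the details filled in.
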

One can then apply the previous theorem to obtain the following slight modification of Theorem 28 in~\cite{Hein2007}.
\begin{theorem}\label{thm:conv_lap}
Let the assumptions of Theorem~\ref{thm:discretenonloc} be satisfied. Then with probability one, for all $f \in C^\infty(M)$ we have that $\Delta_{n, \epsilon_n}f \to \frac{C_2}{2C_1}\Delta_{\rho^2}f$ in $TL^2$.
\end{theorem}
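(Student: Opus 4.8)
The plan is to write $\Delta_{n,\epsilon_n}f$ as a deterministic bias, controlled by the expansion of Coifman and Lafon recalled above, plus a stochastic fluctuation, controlled by a concentration estimate whose error budget is exactly the rate~\eqref{eq:assumptions_rates_epsilon}. I would fix $f\in C^\infty(M)$ and compare the random operator $\Delta_{n,\epsilon_n}f$ with its mean-field counterpart $\Delta_{\epsilon_n}f$. The bias is free: the theorem of Coifman and Lafon above, applied with $K=\|f\|_{C^3}$, gives $\Delta_{\epsilon_n}f\to\tfrac{C_2}{2C_1}\Delta_{\rho^2}f$ uniformly on $M$, hence in $L^2(\nu)$. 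It therefore suffices to show that, almost surely, $\Delta_{n,\epsilon_n}f-\Delta_{\epsilon_n}f\to 0$ uniformly on $M$; the asserted $TL^2$-convergence of $(\nu_n,\Delta_{n,\epsilon_n}f|_{V_n})$ towards $(\nu,\tfrac{C_2}{2C_1}\Delta_{\rho^2}f)$ then follows by composing with any sequence of $2$-stagnating transport maps $T_n$ satisfying $(T_n)_{\#}\nu=\nu_n$ (which exist almost surely by Theorem~\ref{thm:transp_plan_existence}), the triangle inequality, the uniform continuity of $\Delta_{\rho^2}f$ on the compact manifold $M$, and Proposition~\ref{prop:equivalent_tl2}. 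To make this hold almost surely for every $f\in C^\infty(M)$ simultaneously, one chooses --- as in \cite{Coifman2006, Hein2007} --- the exceptional null set so that the concentration bounds below are uniform over $\{g:\|g\|_{C^3}\le K\}$ for each $K\in\mathbf{N}$.

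For the stochastic estimate I would set $k_\epsilon(x,y)=\epsilon^{-k}\eta(|x-y|_d/\epsilon)$, $A(x)=\tfrac1n\sum_j k_{\epsilon_n}(x,X_j)f(X_j)$, $B(x)=d^{n,\epsilon_n}(x)$, and let $\bar A,\bar B$ be their expectations, so that $\Delta_{n,\epsilon_n}f-\Delta_{\epsilon_n}f=-\tfrac1{\epsilon_n^2}\left(\tfrac{A}{B}-\tfrac{\bar A}{\bar B}\right)$. The crucial point --- and the reason the correct threshold is $n\epsilon_n^{k+2}$ rather than the naively expected $n\epsilon_n^{k+4}$ --- is that one should not estimate $A-\bar A$ directly but pass through
\begin{equation*}
N(x):=\tfrac1n\sum_j k_{\epsilon_n}(x,X_j)\bigl(f(X_j)-f(x)\bigr),\qquad \bar N(x):=\mathbf{E}\,N(x),
\end{equation*}
so that $f-\tfrac{A}{B}=\tfrac{N}{B}$ and $\Delta_{\epsilon_n}f=-\tfrac1{\epsilon_n^2}\tfrac{\bar N}{\bar B}$. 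Because $|f(X_j)-f(x)|\le\operatorname{Lip}(f)\,|X_j-x|_d$ and $r\mapsto r\eta(r)$ is bounded, each summand of $N(x)$ has magnitude $O(\epsilon_n^{-k+1})$ and variance $O(\epsilon_n^{-k+2})$, a factor $\epsilon_n^2$ smaller than for $A(x)$; moreover $|\bar N(x)|\lesssim\epsilon_n^2$ uniformly, since $\bar N=-\epsilon_n^2\bar B\,\Delta_{\epsilon_n}f$ with $\bar B$ and $\Delta_{\epsilon_n}f$ uniformly bounded. Expanding $\tfrac{N}{B}-\tfrac{\bar N}{\bar B}=\tfrac{(N-\bar N)\bar B-\bar N(B-\bar B)}{B\bar B}$ and using $B(x)\ge c>0$ uniformly for $n$ large (a standard consequence of $\eta(0)>0$, \eqref{eq:unif_deg}, and a uniform law of large numbers for the degrees), one obtains
\begin{equation*}
\|\Delta_{n,\epsilon_n}f-\Delta_{\epsilon_n}f\|_{L^\infty(M)}\ \lesssim\ \tfrac1{\epsilon_n^2}\,\|N-\bar N\|_{L^\infty(M)}+\|B-\bar B\|_{L^\infty(M)} .
\end{equation*}
Finally, since $\eta$ is monotone and $f$ is Lipschitz, the family $\{\,y\mapsto k_{\epsilon_n}(x,y)(f(y)-f(x)):x\in M\,\}$ is of finite VC type, and a uniform Bernstein inequality for it, together with Borel--Cantelli, yields almost surely
\begin{equation*}
\tfrac1{\epsilon_n^2}\|N-\bar N\|_{L^\infty(M)}+\|B-\bar B\|_{L^\infty(M)}\ \lesssim\ \sqrt{\tfrac{\log n}{n\epsilon_n^{k+2}}}+\tfrac{\log n}{n\epsilon_n^{k+1}},
\end{equation*}
the right-hand side tending to $0$ by~\eqref{eq:assumptions_rates_epsilon}; this closes the argument.

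The technical heart will be this concentration step, and in particular the bookkeeping that makes every power of $\epsilon_n$ balance against the budget $n\epsilon_n^{k+2}/\log n\to+\infty$: the factor $\epsilon_n^2$ gained from $f(X_j)-f(x)$ shrinks the variance, the factor $\epsilon_n^2$ carried by $\bar N$ absorbs the degree fluctuations, and the VC bound supplies the $\log n$. A technically lighter route --- sufficient here, since $TL^2$-convergence only needs control along the point cloud --- would bypass the uniform estimate and prove directly that $\tfrac1n\sum_{i=1}^n|\Delta_{n,\epsilon_n}f(X_i)-\Delta_{\epsilon_n}f(X_i)|^2\to 0$ almost surely, via the same variance computation applied to $N(X_i)$ on the diagonal.
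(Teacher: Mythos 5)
Your proof is correct and follows the approach the paper points to: split $\Delta_{n,\epsilon_n}f - \tfrac{C_2}{2C_1}\Delta_{\rho^2}f$ into a deterministic bias, handled by the Coifman--Lafon expansion, plus a stochastic fluctuation, handled by concentration and Borel--Cantelli --- the paper itself does not write out the fluctuation step, declaring the statement a ``slight modification of Theorem~28'' of Hein, Audibert and von Luxburg, and your centering of $A(x)$ via $N(x)=\tfrac1n\sum_j k_{\epsilon_n}(x,X_j)\bigl(f(X_j)-f(x)\bigr)$, which gains $\epsilon_n$ in magnitude and $\epsilon_n^2$ in variance and forces $|\bar N|\lesssim\epsilon_n^2$, is precisely what makes the assumed rate $n\epsilon_n^{k+2}/\log n\to+\infty$ the right one. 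One small sign slip: with your definitions $f-\tfrac{A}{B}=-\tfrac{N}{B}$ and $\Delta_{\epsilon_n}f=-\tfrac1{\epsilon_n^2}\tfrac{\bar N}{\bar B}$, not $+\tfrac{N}{B}$; this is immaterial since only moduli enter your estimates, and the passage from the $L^\infty(M)$ bound to $TL^2$ via stagnating transport maps and Proposition~\ref{prop:equivalent_tl2} is sound.
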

In the following, we need also the following simple result about the convergence of the degrees.
\begin{lemma}\label{lem:degrees_conv}
Let the assumptions of Theorem~\ref{thm:discretenonloc} be satisfied. Then it holds with probability one that if $T_n$ is a sequence of transport maps such that 
\begin{equation}
\lim_{n \to +\infty} \sup_{x \in M} d_M(x, T_n(x)) = 0,\nonumber
\end{equation}
then
\begin{equation}\label{eq:linf_conv_degrees}
\lim_{n \to +\infty} \Vert d^{n, \epsilon_n}\circ T_n - C_1 \rho \Vert_{L^{\infty}(\nu)} = 0.
\end{equation}
Moreover, $\lim_{n \to +\infty} \Vert \frac{1}{(d^{n, \epsilon}\circ T_n)^{1/2}} - \frac{1}{(C_1\rho)^{1/2}}\Vert_{L^\infty(\nu)} = 0$.
\end{lemma}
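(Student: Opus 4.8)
The plan is to reduce the statement to a uniform law of large numbers for the kernel density estimator $d^{n,\epsilon_n}(x) = \frac{1}{n}\sum_{i=1}^n k_{\epsilon_n}(x, X_i)$, together with the deterministic approximation~\eqref{eq:unif_deg}, and then to handle the composition with $T_n$ using the equicontinuity of $x \mapsto d^{n,\epsilon_n}(x)$ at scale $\epsilon_n$. First I would split the error via the triangle inequality:
\begin{equation*}
\Vert d^{n,\epsilon_n}\circ T_n - C_1\rho \Vert_{L^\infty(\nu)} \le \Vert d^{n,\epsilon_n}\circ T_n - d^{n,\epsilon_n}\Vert_{L^\infty(\nu)} + \Vert d^{n,\epsilon_n} - \mathbf{E}d^{n,\epsilon_n}\Vert_{L^\infty(M)} + \Vert \mathbf{E}d^{n,\epsilon_n} - C_1\rho\Vert_{L^\infty(M)},
\end{equation*}
where $\mathbf{E}d^{n,\epsilon_n}(x) = \int_M k_{\epsilon_n}(x,y)\,d\nu(y)$. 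The last term goes to zero by~\eqref{eq:unif_deg}. The middle term is the heart of the matter: it is a uniform (over $x\in M$) deviation estimate for the empirical average of the bounded random variables $k_{\epsilon_n}(x,X_i)$, whose sup-norm is $O(\epsilon_n^{-k})$ and whose variance is $O(\epsilon_n^{-k})$. A Bernstein-type concentration inequality for fixed $x$ gives a bound of the form $\exp(-c\, n\epsilon_n^{k}\delta^2)$ for the probability that the deviation exceeds $\delta$; combining this with a covering of $M$ by $O(\epsilon_n^{-k})$ balls of radius $\epsilon_n^{2}$ (say) and the Lipschitz-in-$x$ bound $|k_{\epsilon_n}(x,y)-k_{\epsilon_n}(x',y)| \lesssim \epsilon_n^{-k-1}d_M(x,x')$ (which follows from $\eta \in C^1$ away from $0$, plus a separate crude bound near the diagonal using that $\eta$ is bounded) yields a union bound that is summable in $n$ precisely under the assumption $n\epsilon_n^{k+2}/\log n \to +\infty$ from~\eqref{eq:assumptions_rates_epsilon}. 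Borel--Cantelli then gives almost sure convergence of the middle term to zero. This is essentially the standard argument behind Theorem~\ref{thm:conv_lap} and is already implicit in~\cite{Hein2007, GarciaTrillos2018}, so I would either cite it or reproduce the short Bernstein/covering computation.

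For the first term, I would use that on the event where the middle and last terms are small, $d^{n,\epsilon_n}$ is uniformly close to the fixed continuous function $C_1\rho$; hence $d^{n,\epsilon_n}$ is, for $n$ large, uniformly continuous with a modulus that does not degenerate — more precisely, $|d^{n,\epsilon_n}(T_n(x)) - d^{n,\epsilon_n}(x)| \le |d^{n,\epsilon_n}(T_n(x)) - C_1\rho(T_n(x))| + |C_1\rho(T_n(x)) - C_1\rho(x)| + |C_1\rho(x) - d^{n,\epsilon_n}(x)|$. The outer two terms are controlled by the sup-norm bound already established, and the middle term tends to zero uniformly because $\rho$ is continuous on the compact manifold $M$ and $\sup_x d_M(x,T_n(x)) \to 0$ by hypothesis. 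This disposes of the first term.

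Finally, the statement about $1/(d^{n,\epsilon_n}\circ T_n)^{1/2}$ follows from the first part together with the lower bound $d^{n,\epsilon_n}\circ T_n \ge \frac{1}{2}\min_M (C_1\rho) > 0$ for $n$ large (another consequence of the uniform convergence and of $\rho > 0$ on the compact $M$), since $t \mapsto t^{-1/2}$ is Lipschitz on $[\frac{1}{2}\min_M(C_1\rho), \infty)$; one then composes the Lipschitz map with the $L^\infty$-convergence already proven. The main obstacle is the uniform-in-$x$ concentration step, where one must be careful that the covering number and the Bernstein exponent combine to a Borel--Cantelli-summable series exactly under the rate assumption~\eqref{eq:assumptions_rates_epsilon} — the extra two powers of $\epsilon_n$ beyond $\epsilon_n^k$ come from needing the discretization scale fine enough that the Lipschitz-in-$x$ error is negligible; everything else is routine.
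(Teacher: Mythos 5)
Your plan is essentially correct, but you take a slightly heavier route than the paper, and two of the technical details as you have stated them are off.

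The paper's proof avoids a covering argument entirely. It first bounds $\|C_1\rho - C_1\rho\circ T_n\|_{L^\infty} \le \operatorname{Lip}(\rho)\sup_x d_M(x,T_n(x))$ and reduces to showing $\|d^{n,\epsilon_n}\circ T_n - C_1\rho\circ T_n\|_{L^\infty(\nu)}\to 0$. Then it uses the key observation that $(T_n)_\#\nu = \nu_n$, so for $\nu$-a.e.\ $x$ we have $T_n(x)\in\{X_1,\dots,X_n\}$, hence $\|d^{n,\epsilon_n}\circ T_n - C_1\rho\circ T_n\|_{L^\infty(\nu)} = \max_{1\le i\le n}|d^{n,\epsilon_n}(X_i) - C_1\rho(X_i)|$. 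This reduces the problem to concentration at the $n$ random data points; a union bound then costs only a factor of $n$ (using that the $X_i$ are identically distributed and conditioning on $X_i=x$, followed by Bernstein at the fixed $x$ and an integration over $x$), and Borel--Cantelli closes the argument under the sole condition $n\epsilon_n^k/\log n\to+\infty$. Your route instead establishes $\|d^{n,\epsilon_n} - \mathbf{E}\,d^{n,\epsilon_n}\|_{L^\infty(M)}\to 0$ uniformly over all of $M$, which is a stronger statement, and your treatment of the first triangle-inequality term (bounding $d^{n,\epsilon_n}\circ T_n - d^{n,\epsilon_n}$ by passing through the already-controlled proximity of $d^{n,\epsilon_n}$ to the continuous function $C_1\rho$) is clean and correct, as is the last paragraph on $t\mapsto t^{-1/2}$.

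Two corrections to your covering step. First, a covering of $M$ by balls of radius $\epsilon_n^2$ has covering number $O(\epsilon_n^{-2k})$, not $O(\epsilon_n^{-k})$, and more importantly the Lipschitz-in-$x$ error at that scale is $\epsilon_n^{-k-1}\cdot\epsilon_n^2 = \epsilon_n^{1-k}$, which does not vanish for $k\ge 2$. You need a finer covering, e.g.\ radius $r\sim\delta\,\epsilon_n^{k+1}$, so that the Lipschitz error is $O(\delta)$. Second, with that radius the covering number is $N\sim(\delta\,\epsilon_n^{k+1})^{-k}$, so $\log N \lesssim \log n$ (for any polynomial $\epsilon_n$), and the union bound $N\exp(-c\,n\epsilon_n^k\delta^2)$ is Borel--Cantelli summable as soon as $n\epsilon_n^k/\log n\to+\infty$. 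So the extra two powers of $\epsilon_n$ in~\eqref{eq:assumptions_rates_epsilon} are \emph{not} what this lemma needs; they are used elsewhere (e.g.\ in the pointwise consistency of the graph Laplacian). Neither the paper's proof nor a correctly implemented version of yours requires $n\epsilon_n^{k+2}/\log n\to+\infty$.
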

\begin{proof}
The second assertion follows from~\eqref{eq:linf_conv_degrees} and the fact that $\rho > 0$ on the compact manifold $M$. In the following we write $d_n$ for $d^{n, \epsilon}$. To prove~\eqref{eq:linf_conv_degrees} we first observe that

\begin{align}
\Vert d_n \circ T_n - C_1\rho \Vert_{L^{\infty}(\nu)} \le &\ \Vert d_n \circ T_n - C_1\rho \circ T_n \Vert_{L^{\infty}(\nu)} \nonumber
\\&+ \operatorname{Lip}(\rho)\sup_{x \in M} d_M(x, T_n(x)).\nonumber
\end{align}
Thus we only need to prove that $\Vert d_n \circ T_n - C_1\rho \circ T_n\Vert_{L^{\infty}(\nu)}$ converges to zero. To this aim, observe that we may write

\begin{equation}
\Vert d_n \circ T_n - C_1\rho \circ T_n\Vert_{L^{\infty}(\nu)} = \max_{i=1, . . . , n} \left| d_n(X_i) - C_1\rho(X_i) \right|,\nonumber
\end{equation}
and that by the fact that $X_i$'s are identically distributed, if $\gamma > 0$, then

\begin{align*}
\mathbb{P}\bigg(\max_{i = 1, . . . , n} \left| d_n(X_i) - C_1\rho(X_i) \right| \ge \gamma\bigg) &\le n\mathbb{P}\bigg(\left| d_n(X_1) - C_1\rho(X_1) \right| \ge \gamma\bigg)
\\ &= n \int_{M} \mathbb{P}\bigg(|d_n(x) - C_1\rho(x)| \ge \gamma\bigg) d\nu(x).
\end{align*}
Fix $x \in M$,then

\begin{align*}
\mathbb{P}\bigg(|d_n(x) - C_1\rho(x)| \ge \gamma\bigg) \le&\ \mathbb{P}\left(\left| \frac{1}{n} \sum_{j=1}^n k_{\epsilon_n}(x, X_j) - \int_M k_{\epsilon_n}(x, y) d\nu(y)\right| \ge \frac{\gamma}{2}\right)
\\ &+\mathbb{P}\left( \left| \int_M k_{\epsilon_n}(x, y) d\nu(y) - C_1\rho(x)\right| \ge \frac{\gamma}{2}\right).
\end{align*}
The second term on the right hand side is zero for $n$ sufficiently large because of~\eqref{eq:unif_deg}. Thus for $n$ large enough depending on $\gamma$,

\begin{equation}
\mathbb{P}\bigg(|d_n(x) - C_1\rho(x)| \ge \gamma\bigg) \le\ \mathbb{P}\left(\left| \frac{1}{n} \sum_{j=1}^n k_{\epsilon_n}(x, X_j) - \int_M k_{\epsilon_n}(x, y) d\nu(y)\right| \ge \frac{\gamma}{2}\right).\nonumber
\end{equation}
We now proceed at estimating the right hand side. Observe that $Y_j := k_{\epsilon_n}(x, X_j)$ are iid\ random variables with 

\begin{equation}
\mathbb{E}[Y_j] = \int_M k_{\epsilon_n}(x, y)d\nu(y),\ |Y_j| \le \frac{\Vert \eta \Vert_{\infty}}{\epsilon_n^k},\ \operatorname{Var}(Y_j) \le C_1c_0 \frac{\Vert \eta \Vert_ {\infty}}{\epsilon_n^k}.\nonumber
\end{equation}
Thus we can apply Bernstein's inequality to get the following bound

\begin{align*}
&\mathbb{P}\left(\left| \frac{1}{n} \sum_{j=1}^n k_{\epsilon_n}(x, X_j) - \int_M k_{\epsilon_n}(x, y) d\nu(y)\right| \ge \frac{\gamma}{2}\right) 
\\ &\le 2\operatorname{exp}\left( \frac{-n\frac{\gamma^2}{4}\epsilon_n^k}{2\Vert \eta \Vert_{\infty}C_1c_0 + \frac{2}{3}\Vert \eta \Vert_{\infty}\frac{\gamma}{2}} \right)
\end{align*}
Putting things together and summing over $n$ we have that, for some $n(\gamma) \in \mathbf{N}$ depending on $\gamma$

\begin{align*}
&\sum_{n \in \mathbf{N}}\mathbb{P}\bigg(\Vert d_n \circ T_n - C_1\rho \circ T_n \Vert_{L^{\infty}(\nu)} \ge \gamma\bigg) 
\\ &\le n(\gamma) + \sum_{n = n(\gamma) + 1} 2n\operatorname{exp}\left( \frac{-n\frac{\gamma^2}{4}\epsilon_n^k}{2\Vert \eta \Vert_{\infty}C_1c_0 + \frac{2}{3}\Vert \eta \Vert_{\infty}\frac{\gamma}{2}} \right).
\end{align*}
The latter sum is finite if $\frac{n{\epsilon_n^k}}{\log(n)}\to +\infty$. We conclude by Borel-Cantelli's lemma that almost surely $\Vert d_n \circ T_n - C_1\rho \circ T_n\Vert_{L^{\infty}(\nu) }\to 0$, which concludes the proof.
\end{proof}
\subsection{$\Gamma$-convergence of the Dirichlet energies}

In the setting of Section~\ref{sec:mainres}, we define for each random graph $G_{n, \epsilon_n}$ the Dirichlet energy functional $E_n$ as:

\begin{equation}\label{eq:dir_en_graph}
E_n(u) = \frac{1}{2}|\nabla u|_{\mathcal{E}_{n, \epsilon_n}}^2,\quad u \in \mathcal{V}_{n}.
\end{equation}
In the proof of Theorem~\ref{thm:conv_heat} we will need the following result about the $\Gamma$-convergence of the Dirichlet energies defined on the graphs to the Dirichlet energy on the manifold.
\begin{theorem}\label{thm:gamma_dirichlet}
Let $M$ be a $k$-dimensional compact Riemannian manifold embedded in $\mathbf{R}^d$, $k \ge 2$. Let $\rho >0$ be a smooth function on $M$ such that $\nu = \rho \volm \in \mathcal{P}(M)$. If $k \ge 2$ and $\frac{\epsilon_n^kn}{\log(n)} \to +\infty$ as $n \to +\infty$ then
\begin{equation}
E_n \xrightarrow{\Gamma-TL^2} \frac{C_2}{2}E,\nonumber
\end{equation}
where the energy $E$ is defined on $L^2(M)$ as
\begin{equation}\label{eq:dirichlet_energy}
E(u) = \begin{cases}
\frac{1}{2}\int_M |\nabla u|^2 \rho^2 d\volm\ &\text{if}\ u \in H^1(M) \\
+\infty\ &\text{otherwise}.
\end{cases}\nonumber
\end{equation}
Moreover if $u \in C^{\infty}(M)$, then $\limsup_{n \to +\infty} E_{n}(u) \le \frac{C_2}{2} E(u)$. Finally, we have the following compactness property: if $u_n$ are such that

\begin{equation}
\sup_{n \in \mathbf{N}} E_{n}(u_n) < +\infty,\ \sup_{n \in \mathbf{N}} \Vert u_n \Vert_{L^2(M)} < +\infty\nonumber
\end{equation}
then the sequence $u_n$ is precompact in $TL^2$.
\end{theorem}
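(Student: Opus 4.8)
The statement is essentially that of Garc\'ia Trillos--Slep\v{c}ev~\cite{GarciaTrillos2018}, and its adaptation from the flat torus to a compact submanifold is routine; I outline the argument. The plan is to first replace the graph energies by nonlocal continuum energies on $(M,\nu)$, and then prove the nonlocal-to-local $\Gamma$-convergence directly. By Theorem~\ref{thm:transp_plan_existence}, fix (on the almost sure event) transport maps $T_n$ with $(T_n)_{\#}\nu = \nu_n := \frac1n\sum_{i}\delta_{X_i}$ and $\delta_n := \sup_{x\in M} d_M(x,T_n(x))\to 0$; the scaling hypothesis $\epsilon_n^k n/\log n\to\infty$ is exactly what forces $\delta_n/\epsilon_n\to 0$. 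For $u:V_n\to\mathbf{R}$ write $\tilde u:=u\circ T_n\in L^2(\nu)$, so that by Proposition~\ref{prop:equivalent_tl2}, $TL^2$-convergence of $u_n$ to $u$ is equivalent to $L^2(\nu)$-convergence of $\tilde u_n$ to $u$. Unfolding the definitions one gets
\[
E_n(u) = \frac{1}{4\epsilon_n^2}\int_{M\times M}\eta_{\epsilon_n}\big(|T_n(x)-T_n(y)|_d\big)\,|\tilde u(x)-\tilde u(y)|^2\,d\nu(x)\,d\nu(y),
\]
with $\eta_\epsilon(t)=\epsilon^{-k}\eta(t/\epsilon)$. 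Using that $\eta$ is nonincreasing, that $\delta_n\ll\epsilon_n$, and that ambient and geodesic distances agree up to a factor $1+O(d_M(x,y)^2)$, one sandwiches $E_n(u)$ between $\Lambda_{(1-o(1))\epsilon_n}(\tilde u)$ and $\Lambda_{(1+o(1))\epsilon_n}(\tilde u)$ up to multiplicative $1+o(1)$ errors, where
\[
\Lambda_\epsilon(v):=\frac{1}{4\epsilon^2}\int_{M\times M}\eta_\epsilon\big(|x-y|_d\big)\,|v(x)-v(y)|^2\,d\nu(x)\,d\nu(y).
\]
This is the standard discrete-to-nonlocal comparison, and it reduces the theorem to the $\Gamma$-convergence of $\Lambda_{\epsilon_n}$ to $\tfrac{C_2}{2}E$ in $L^2(\nu)$ together with the matching compactness.

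For the $\limsup$ inequality and the claim about smooth $u$: let $v\in C^\infty(M)$, write $y=\exp_x(\epsilon z)$ in geodesic normal coordinates at $x$, and Taylor expand $v(y)-v(x)=\epsilon\langle\nabla v(x),z\rangle+O(\epsilon^2)$, $d\nu(y)=\rho(x)\epsilon^k\,dz+O(\epsilon^{k+1})$, $|x-y|_d=\epsilon|z|+O(\epsilon^3)$. Dominated convergence, legitimate by the exponential decay of $\eta$, then gives
\[
\lim_{\epsilon\downarrow 0}\Lambda_\epsilon(v) = \frac14\int_M\rho(x)^2\Big(\int_{\mathbf{R}^k}\eta(|z|)\,\langle\nabla v(x),z\rangle^2\,dz\Big)\,d\volm(x) = \frac{C_2}{2}E(v),
\]
using $\int_{\mathbf{R}^k}\eta(|z|)z_iz_j\,dz = C_2\,\delta_{ij}$; a concentration estimate in the spirit of Lemma~\ref{lem:degrees_conv} upgrades this to $\lim_n E_n(v) = \tfrac{C_2}{2}E(v)$ almost surely, which in particular proves the $\limsup$ bound for $v$ smooth (with recovery sequence $v|_{V_n}$). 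For a general $u\in H^1(M)$ one mollifies to $u_\delta\in C^\infty(M)$ with $u_\delta\to u$ in $H^1$, extracts by a diagonal argument $v_n:=u_{\delta(n)}$ with $v_n\to u$ in $L^2(\nu)$ and $\Lambda_{\epsilon_n}(v_n)\to\tfrac{C_2}{2}E(u)$, and transports back through $T_n$.

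The $\liminf$ inequality and the compactness are the substantive parts. For compactness, if $\sup_n E_n(u_n)<\infty$ and $\sup_n\|u_n\|_{L^2}<\infty$ then $\sup_n\Lambda_{\epsilon_n}(\tilde u_n)<\infty$; since $\eta(0)>0$ and $\eta$ is continuous at $0$, one has $\eta_\epsilon(|x-y|_d)\ge c\,\epsilon^{-k}$ on $\{|x-y|_d\le c'\epsilon\}$, so a uniform bound on $\Lambda_{\epsilon_n}$ controls the $L^2$-moduli of continuity of $\tilde u_n$ at scales $\gtrsim\epsilon_n$, and a Riesz--Fr\'echet--Kolmogorov argument performed in finitely many normal-coordinate charts yields precompactness of $\{\tilde u_n\}$ in $L^2(\nu)$, i.e.\ $TL^2$-precompactness of $\{u_n\}$. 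For the $\liminf$, given $\tilde u_n\to u$ in $L^2(\nu)$ with $\liminf_n\Lambda_{\epsilon_n}(\tilde u_n)<\infty$, one localizes: covering $M$ by finitely many normal charts in which $g_{ij}=\delta_{ij}+O(r^2)$, one bounds each chart's contribution to $\Lambda_{\epsilon_n}$ from below by its flat Euclidean analogue up to $1+o(1)$ factors, and for the Euclidean nonlocal functional one applies the blow-up method of Fonseca--M\"{u}ller~\cite{Fonseca1993} (cf.~\cite{Alberti1998}): rescaling around a.e.\ point, testing with difference quotients and using the convexity of $t\mapsto t^2$ produces $\liminf_\epsilon\tfrac{1}{4\epsilon^2}\int\int\eta_\epsilon(|x-y|)\,|w_\epsilon(x)-w_\epsilon(y)|^2\,dx\,dy\ge\tfrac{C_2}{4}\int|\nabla w|^2\,dx$ whenever $w_\epsilon\to w$ in $L^2_{\mathrm{loc}}$. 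Summing chart contributions, with the weight $\rho(x)\rho(y)$ localizing to $\rho^2$, gives $\liminf_n\Lambda_{\epsilon_n}(\tilde u_n)\ge\tfrac{C_2}{2}E(u)$; in particular $u\in H^1(M)$.

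The main obstacle is the interplay of the two length scales $\delta_n$ (transport displacement) and $\epsilon_n$ (graph bandwidth), together with the $O(\epsilon^2)$ distortions coming from the manifold geometry and the ambient embedding: the discrete-to-nonlocal sandwich is only worth anything because the rate hypothesis forces $\delta_n\ll\epsilon_n$ through Theorem~\ref{thm:transp_plan_existence}, and propagating the resulting $1+o(1)$ multiplicative errors cleanly through both the $\liminf$ localization and the Riesz--Kolmogorov compactness is where the real work lies; the nonlocal-to-local $\Gamma$-convergence in the Euclidean model is by now classical.
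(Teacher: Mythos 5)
Your overall architecture --- transport maps to pass from discrete to nonlocal, then localization to reduce the nonlocal functional on $M$ to its flat Euclidean analogue --- matches the paper's, which reduces to the Garc\'ia Trillos--Slep\v{c}ev argument and invokes an Euclidean nonlocal-to-local $\Gamma$-convergence theorem (Theorem~\ref{thm:nonlocal_local_euclidean}) as a black box. However, your assertion that the hypothesis $\epsilon_n^k n/\log n\to\infty$ \emph{exactly} forces $\delta_n/\epsilon_n\to 0$ is not true for $k=2$ with the transport maps from Theorem~\ref{thm:transp_plan_existence}: there $\delta_n\lesssim(\log n)^{3/4}/n^{1/2}$, so under $\epsilon_n\gg(\log n/n)^{1/2}$ one only has $\delta_n/\epsilon_n\lesssim(\log n)^{1/4}$, which diverges. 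The paper explicitly flags this and removes the extra logarithm by appealing to an improved transport estimate from \cite{Calder2022}; without that improvement the $k=2$ case of your sandwich breaks.

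A second, related gap is in your $\liminf$ localization: you speak of covering $M$ by finitely many normal charts and then ``summing chart contributions,'' but for a lower bound one cannot sum contributions over an overlapping cover --- that overcounts the double integral and gives a bound in the wrong direction. The paper resolves this by selecting a countable \emph{disjoint} Vitali family of balls, each contained in the image of a $1$-Lipschitz parametrization, using that $\eta$ is nonincreasing to bound $G_{\epsilon_n}$ from below by the sum of Euclidean contributions over the disjoint pieces, and then applying Fatou's lemma before invoking Theorem~\ref{thm:nonlocal_local_euclidean}. If you instead intend a Fonseca--M\"uller blow-up (a pointwise lower bound on the Radon--Nikodym derivative of a weak-$*$ limit measure, for which disjointness is indeed unnecessary), your phrasing does not carry that out; as written the $\liminf$ step is not a valid lower bound.
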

The proof of Theorem 8 is a straightforward adaptation of the argument given in Theorem 1.4 in~\cite{GarciaTrillos2018} for the flat case. For the sake of completeness, we include a proof in the \hyperlink{sec:appendix}{Appendix}.

\subsection{The optimal energy dissipation inequality}
Let us recall the following notion of weak solution of the heat equation on the weighted manifold $M$.
\begin{definition}
Let $(M, g, \mu = \xi \volm)$ be a weighted $k$-dimensional compact Riemannian submanifold of $\mathbf{R}^d$, with $\xi > 0$ smooth. Let $u_0 \in L^2(\mu)$ and $c > 0$. A weak solution for the diffusion equation
\begin{equation}\label{eq:weak_sol}
\begin{cases}
\partial_t u = -c \Delta_{\xi} u \\
u(x,0) = u_0
\end{cases}
\end{equation}
is a function $u \in L^2_{loc}([0, +\infty), H^1(M))$ such that $u' \in L^2_{loc}([0, +\infty), H^{-1}(M))$ for which
\begin{enumerate}
\item $u(0) = u_0$,
\item $(\partial_t u, \xi w)_{H^1;H^{-1}} + c\int_M g_x\left( \nabla u, \nabla w\right) \xi d\volm = 0$ for a.e. $t$ and all $w \in H^1(M)$.
\end{enumerate}
\end{definition}
The following lemma is a well-known result, which says that the equation~\eqref{eq:weak_sol} is completely characterized by the energy dissipation inequality~\eqref{eq:ener_dis_inq}.

\begin{lemma}\label{lem:energy_diss}
Let $u \in L^2_{loc}([0, +\infty), H^1(M))$ be such that $u' \in L^2_{loc}([0, +\infty), L^2(M))$. Let $u_0 \in C^{\infty}(M)$. Then $u$ is a weak solution of~\eqref{eq:weak_sol} if and only if $u(0) = u_0$ and $u$ satisfies the optimal energy dissipation inequality for a.e.\ $t \in [0, +\infty)$, i.e.,
\begin{align}\label{eq:ener_dis_inq}
\begin{aligned}
cE[u(t)] + \frac{1}{2}\int_0^t \int_{M} c^2|\Delta_\xi u|^2\xi d\volm ds + \frac{1}{2}\int_0^t \int_{M} |u'|^2\xi d\volm ds \le cE[u_0]
\end{aligned}
\end{align}
where we define, for $v \in H^1(M)$,
\begin{equation}
E[v] = \frac{1}{2}\int_M |\nabla v|^2 \xi d\volm.
\end{equation}
\end{lemma}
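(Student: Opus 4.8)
The plan is to exploit the fact that \eqref{eq:weak_sol} is the $L^2(\mu)$-gradient flow of the convex, lower semicontinuous functional $v \mapsto c\,E[v]$ (with $\mu = \xi\,\volm$), and that for such gradient flows the solution is characterized by the sharp energy dissipation inequality; the only genuine content is to verify that the chain rule holds along the curves under consideration, so that \eqref{eq:ener_dis_inq} can actually be differentiated and integrated. I would organize the argument in two implications.

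First, the easy direction: if $u$ is a weak solution, then testing condition (2) of the definition of weak solution with $w = u(t) \in H^1(M)$ (legitimate for a.e.\ $t$) gives $(\partial_t u, \xi u)_{H^1;H^{-1}} + c\int_M |\nabla u|^2 \xi\,d\volm = 0$. Since $u' \in L^2_{loc}([0,\infty), L^2(M))$ by hypothesis, the pairing $(\partial_t u, \xi u)_{H^1;H^{-1}}$ equals $\int_M u'\, u\, \xi\, d\volm = \tfrac{1}{2}\tfrac{d}{dt}\|u\|_{L^2(\mu)}^2$... but more usefully, the standard chain rule for $t \mapsto E[u(t)]$ along an $H^1$-valued curve with $L^2$ derivative gives $\tfrac{d}{dt} E[u(t)] = \int_M \langle \nabla u, \nabla u'\rangle \xi\, d\volm = (\Delta_\xi u, u')_{L^2(\mu)}$, where the last equality uses that $u(t) \in \mathcal{W}^{2,2}(M)$ for a.e.\ $t$ (which itself follows from condition (2): the functional $w \mapsto c\int_M \langle\nabla u,\nabla w\rangle \xi\, d\volm = -(\partial_t u, \xi w)$ is $L^2(\mu)$-bounded in $w$, so $\Delta_\xi u = -\tfrac1c u' \in L^2(\mu)$). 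Hence $c\,\tfrac{d}{dt}E[u(t)] = -c^2(\Delta_\xi u, \Delta_\xi u)_{L^2(\mu)} = -(u', u')_{L^2(\mu)}$, so in fact
\begin{equation*}
c\,\frac{d}{dt}E[u(t)] + \frac12 c^2\|\Delta_\xi u\|_{L^2(\mu)}^2 + \frac12 \|u'\|_{L^2(\mu)}^2 = 0 \quad \text{(here using } c^2\|\Delta_\xi u\|^2 = \|u'\|^2\text{)},
\end{equation*}
and integrating from $0$ to $t$ and using $u(0) = u_0$ yields \eqref{eq:ener_dis_inq}, even with equality.

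Second, the converse: assume $u(0) = u_0$ and \eqref{eq:ener_dis_inq} holds for a.e.\ $t$. For any curve $u \in L^2_{loc}([0,\infty),H^1)$ with $u' \in L^2_{loc}([0,\infty), L^2(M))$ and $u(t) \in \mathcal{W}^{2,2}(M)$ a.e.\ (the last again extracted from $E[u(t)] < \infty$ a.e.\ together with the a.e.\ finiteness of the dissipation terms), the chain rule gives $c\,E[u(t)] - c\,E[u_0] = \int_0^t c(\Delta_\xi u, u')_{L^2(\mu)}\,ds$. Substituting into \eqref{eq:ener_dis_inq} gives
\begin{equation*}
\int_0^t \left( c(\Delta_\xi u, u')_{L^2(\mu)} + \frac12 c^2\|\Delta_\xi u\|_{L^2(\mu)}^2 + \frac12 \|u'\|_{L^2(\mu)}^2 \right) ds \le 0,
\end{equation*}
i.e.\ $\int_0^t \tfrac12 \|u' + c\Delta_\xi u\|_{L^2(\mu)}^2\, ds \le 0$ for a.e.\ $t$, forcing $u' = -c\Delta_\xi u$ in $L^2(\mu)$ for a.e.\ $t$. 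This is precisely condition (2) of the weak-solution definition (pair against $\xi w$ and integrate by parts), and $u(0) = u_0$ is assumed, so $u$ is a weak solution.

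I expect the main obstacle to be the rigorous justification of the chain rule $\tfrac{d}{dt}E[u(t)] = (\Delta_\xi u(t), u'(t))_{L^2(\mu)}$ for an arbitrary curve that is only assumed to lie in $L^2_{loc}(H^1)$ with derivative in $L^2_{loc}(L^2)$ — one must first upgrade to $u(t) \in \mathcal{W}^{2,2}(M)$ for a.e.\ $t$ (this is where the a.e.\ finiteness of the term $\int_0^t \|\Delta_\xi u\|^2 \xi\, d\volm\, ds$ in \eqref{eq:ener_dis_inq} is used in the converse direction, and condition (2) is used in the direct direction), and then invoke the standard result (e.g.\ via the spectral resolution $E_\gamma$ of $\Delta_\xi$, writing $E[u(t)] = \tfrac12\int_0^\infty \gamma\, d\|E_\gamma u(t)\|^2$ and differentiating) that this regularity suffices for the chain rule. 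Everything else is bookkeeping with the pairing $(\partial_t u, \xi w)_{H^1;H^{-1}}$, which collapses to the $L^2(\mu)$ inner product $\int_M u' w\, \xi\, d\volm$ under the hypothesis $u' \in L^2_{loc}(L^2)$, and the elementary identity $2ab + a^2 + b^2 = (a+b)^2$ at the level of $L^2(\mu)$-norms with $a = u'$, $b = c\Delta_\xi u$.
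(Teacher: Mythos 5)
Your proof is correct and follows essentially the same strategy as the paper's: differentiate $E[u(t)]$ via the chain rule $\tfrac{d}{dt}E[u(t)] = (\Delta_\xi u, u')_{L^2(\mu)}$, use the equation in the forward direction (giving in fact equality), and complete the square in the converse direction. The one minor difference is how the regularity needed for the chain rule is justified in the forward direction: the paper simply invokes parabolic regularity to say $u$ is smooth, whereas you extract $\Delta_\xi u(t)\in L^2(\mu)$ a.e.\ directly from condition (2) and the hypothesis $u'\in L^2_{loc}(L^2)$; your route is arguably more self-contained, but both are standard and lead to the same conclusion.
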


\begin{proof}
We first observe that whenever $u \in L^2_{loc}([0, +\infty), H^1(M))$ is a function such that $u' \in L^2_{loc}([0, +\infty), L^2(M))$ and $\Delta_{\xi}u \in L^2_{loc}([0, +\infty), L^2(M))$ we have

\begin{equation}\label{eq:time_der_en}
\frac{d}{dt} cE[u(t)] = c \int_M \Delta_{\xi}u \partial_t u\ \xi d\volm.
\end{equation}
Now, assume first that $u$ is a weak solution of~\eqref{eq:weak_sol}. Then by parabolic regularity, $u$ is smooth and~\eqref{eq:time_der_en} is thus true. Using the equation for $\partial_t u$ we obtain that
\begin{align*}
&c \int_M \Delta_{\xi}u \partial_t u \xi d\volm \\ &= -\frac{c^2}{2}\int_M |\Delta_{\xi}u|^2\xi d\volm -\frac{1}{2} \int_M |\partial_t u|^2 \xi d\volm.
\end{align*}
Thus, integrating~\eqref{eq:time_der_en} in time we get the required inequality (actually, equality).

Conversely, assume that~\eqref{eq:ener_dis_inq} is satisfied. Then we clearly infer that $\Delta_{\xi}u \in L^2([0, +\infty), L^2(M))$, we can use~\eqref{eq:time_der_en} in~\eqref{eq:ener_dis_inq} to get, after completing the square,

\begin{align*}
\frac{1}{2}\int_0^t \int_M \left(c\Delta_{\xi}u + u' \right)^2 \xi d\volm ds \le 0.
\end{align*}
which forces $\partial_t u = -c\Delta_{\xi}u$, thus $u$ is a weak solution of~\eqref{eq:weak_sol}.
\end{proof}

In a similar way, one can prove that solutions of the heat equation on a graph also satisfy an energy dissipation inequality. Namely, we have the following result.
\begin{lemma}\label{lem:energy_diss_graph}
Let $G_{n, \epsilon}$ be a graph as constructed in Section~\ref{sec:thescheme}. Let $u_0 \in \mathcal{V}_{n, \epsilon}$. Let $v(x,t) = e^{-t\Delta_{n, \epsilon}}u_0(x)$. Then for all $t \in [0, +\infty)$ the optimal energy dissipation inequality is satisfied, i.e.\ 
\begin{equation}
E_n[v(t)] + \frac{1}{2}\int_0^t |\Delta_{n,\epsilon}v(s)|^2_{\mathcal{V}_{n, \epsilon}}ds+ \frac{1}{2}\int_0^t |\frac{d}{ds}v(s)|^2_{\mathcal{V}_{n, \epsilon}}ds \le E_n[u_0],
\end{equation}
where $E_n$ is the Dirichlet energy defined in~\eqref{eq:dir_en_graph} with $\epsilon_n$ replaced by $\epsilon$.
\end{lemma}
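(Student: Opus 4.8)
\emph{Plan.} This is the finite-dimensional instance of the statement that the heat semigroup is the gradient flow of the Dirichlet energy, and the proof is a short direct computation in the finite-dimensional Hilbert space $(\mathcal{V}_{n,\epsilon}, \langle\cdot,\cdot\rangle_{\mathcal{V}_{n,\epsilon}})$. The first step is to record the algebraic identity linking the Dirichlet energy to the Laplacian. Since $\operatorname{div}\colon \mathcal{E}_{n,\epsilon}\to\mathcal{V}_{n,\epsilon}$ is by definition the adjoint of $\nabla\colon \mathcal{V}_{n,\epsilon}\to\mathcal{E}_{n,\epsilon}$ with respect to $\langle\cdot,\cdot\rangle_{\mathcal{V}_{n,\epsilon}}$ and $\langle\cdot,\cdot\rangle_{\mathcal{E}_{n,\epsilon}}$, and $\Delta_{n,\epsilon}=\operatorname{div}\circ\nabla$, one has for all $u,w\in\mathcal{V}_{n,\epsilon}$
\[
\langle \Delta_{n,\epsilon}u, w\rangle_{\mathcal{V}_{n,\epsilon}} = \langle \nabla u, \nabla w\rangle_{\mathcal{E}_{n,\epsilon}},
\]
which is symmetric in $u$ and $w$; taking $w=u$ gives $\langle \Delta_{n,\epsilon}u, u\rangle_{\mathcal{V}_{n,\epsilon}} = |\nabla u|^2_{\mathcal{E}_{n,\epsilon}} = 2E_n[u]$. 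In particular $\Delta_{n,\epsilon}$ is a symmetric, positive semidefinite endomorphism of $(\mathcal{V}_{n,\epsilon},\langle\cdot,\cdot\rangle_{\mathcal{V}_{n,\epsilon}})$.

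Next I would use that $v(t)=e^{-t\Delta_{n,\epsilon}}u_0$, being a matrix exponential, is real-analytic in $t\in[0,+\infty)$ with values in $\mathcal{V}_{n,\epsilon}$ and solves $\frac{d}{dt}v(t)=-\Delta_{n,\epsilon}v(t)$ with $v(0)=u_0$. Hence $t\mapsto E_n[v(t)]$ is $C^1$ and, differentiating $E_n[v(t)]=\tfrac12\langle\nabla v(t),\nabla v(t)\rangle_{\mathcal{E}_{n,\epsilon}}$ and using the symmetry from the first step,
\[
\frac{d}{dt}E_n[v(t)] = \Big\langle \Delta_{n,\epsilon}v(t), \tfrac{d}{dt}v(t)\Big\rangle_{\mathcal{V}_{n,\epsilon}} = -\big|\Delta_{n,\epsilon}v(t)\big|^2_{\mathcal{V}_{n,\epsilon}}.
\]
Since $\frac{d}{dt}v(t)=-\Delta_{n,\epsilon}v(t)$ we have $\big|\tfrac{d}{dt}v(t)\big|^2_{\mathcal{V}_{n,\epsilon}}=\big|\Delta_{n,\epsilon}v(t)\big|^2_{\mathcal{V}_{n,\epsilon}}$, so the right-hand side equals $-\tfrac12\big|\Delta_{n,\epsilon}v(t)\big|^2_{\mathcal{V}_{n,\epsilon}}-\tfrac12\big|\tfrac{d}{dt}v(t)\big|^2_{\mathcal{V}_{n,\epsilon}}$.

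Finally I would integrate this identity over $[0,t]$ to obtain
\[
E_n[v(t)] + \frac12\int_0^t \big|\Delta_{n,\epsilon}v(s)\big|^2_{\mathcal{V}_{n,\epsilon}}\,ds + \frac12\int_0^t \Big|\tfrac{d}{ds}v(s)\Big|^2_{\mathcal{V}_{n,\epsilon}}\,ds = E_n[u_0],
\]
i.e.\ in fact the optimal energy dissipation \emph{equality}, from which the stated inequality follows a fortiori. There is no real obstacle: everything happens in a finite-dimensional Hilbert space, so differentiability, interchange of derivative and integral, and integrability are automatic. The only point deserving care is that the differentiation of the energy must be performed with respect to the weighted inner product $\langle\cdot,\cdot\rangle_{\mathcal{V}_{n,\epsilon}}$ for which $\Delta_{n,\epsilon}$ is self-adjoint (and not the bare matrix $\tfrac{1}{\epsilon^2}(\mathbb{I}-\tfrac1n D^{-1}W)$ with respect to the Euclidean product); this is exactly what makes the cross term collapse to $-|\Delta_{n,\epsilon}v|^2_{\mathcal{V}_{n,\epsilon}}$, mirroring the continuum computation in the proof of Lemma~\ref{lem:energy_diss}.
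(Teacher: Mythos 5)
Your proof is correct and follows the same route the paper implicitly takes: the paper states Lemma~\ref{lem:energy_diss_graph} without a separate proof, remarking only that it is proved ``in a similar way'' to the continuum Lemma~\ref{lem:energy_diss}, whose proof is precisely your computation (differentiate the energy, use the equation to split the cross term into the two halves, integrate in time). Your only addition --- noting that in finite dimensions the equality, not just the inequality, holds and that all regularity is automatic --- is accurate and harmless.
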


\section{Proofs}\label{sec:proofs}

\subsection{Convergence of the heat operators}
\begin{proof}[Proof of Theorem~\ref{thm:conv_heat}]
We first prove the following result: Assume that $u_0 \in C^{\infty}(M)$, then for every $t>0$ we have

\begin{equation}\label{eq:smooth_claim}
e^{-t\Delta_{n, \epsilon_n}}(u_0|_{V_n}) \to e^{-\frac{C_2}{2C_1}t\Delta_{\rho^2}}u_0\ \text{in}\ TL^2.
\end{equation}

To this aim, observe that it suffices to prove the result for the case $C_1 = 1$; the general case follows by rescaling the weight functions. Define, for $n \in \mathbf{N}, t \ge 0$ and $x \in \mathcal{V}_n(\omega)$,

\begin{equation}
v_n(\omega, x,t) := \left(e^{-t\Delta_{n, \epsilon_n}}u_0|_{V_n(\omega)} \right)(x).\nonumber
\end{equation}
Here $\omega$ is a sample point from some probability space $(\Omega, \mathbb{P})$ on which the random variables $\{X_i\}_{i \in \mathbf{N}}$ are defined. We want to prove that $\mathbb{P}$-a.s.\ for every sequence of $2$-stagnating transport maps $T_n$ from $\nu$ to $\nu_n(\omega) := \frac{1}{n}\sum_{i=1}^n \delta_{X_i(\omega)}$ we have that, for any $t>0$

\begin{equation}\label{eq:claim_explained}
\lim_{n \to +\infty} \int_M | v_n(t,T_n(x)) - u(t, x)|^2 d\nu = 0,
\end{equation}
where $u(t, \cdot) = e^{-\frac{C_2}{2}t\Delta_{\rho^2}}u$. To this aim, pick $\omega \in \Omega$ such that the following conditions are satisfied:

\begin{enumerate}[(i)]
\item There exists a sequence of $2$-stagnating transport maps $T_n$ such that\label{eq:2stagn}~\eqref{eq:cond_transp_plans} is satisfied.
\item $E_n \xrightarrow{\Gamma-TL^2} \frac{C_2}{2}E$.\label{eq:gammaconv}
\item $\Delta_{n, \epsilon_n} f \xrightarrow{TL^2} \frac{C_2}{2C_1}\Delta_{\rho^2}f$ for every $f \in C^{\infty}(M)$.\label{eq:convlap}
\item $\Vert d^{n, \epsilon_n} \circ T_n - C_1 \rho\Vert_{L^{\infty}(\nu)} \to 0$.\label{eq:degconv}
\end{enumerate}
Observe that by Theorem~\ref{thm:transp_plan_existence}, Theorem~\ref{thm:gamma_dirichlet}, Theorem~\ref{thm:conv_lap} and Lemma~\ref{lem:degrees_conv} these conditions hold for $\mathbb{P}$-a.e.\ $\omega \in \Omega$. Thus if we prove~\eqref{eq:claim_explained} for such an $\omega$ we are done. From now on, we will assume $\omega$ to be fixed, so we drop this variable for ease of notation.
Recalling Proposition~\ref{prop:equivalent_tl2},  we just need to show~\eqref{eq:claim_explained} for the sequence of transport maps in~\eqref{eq:2stagn}. Define $\tilde{v}_n(t, x) := v_n(t, T_n(x))$, for $t \in [0, +\infty), x \in M$. By condition~\eqref{eq:degconv} and assuming that $n$ is sufficiently large we have that for all $x \in V_n$,

\begin{equation}
\frac{C_1}{2c} \le d^{n, \epsilon_n}(x) \le 2 C_1c. \nonumber
\end{equation}
Here $c>0$ is a constant such that

\begin{equation}
\frac{1}{c} \le \rho \le c\ \text{on}\ M.\nonumber
\end{equation}
In particular we have that there exists a constant $C > 0$ such that for any $w \in \mathcal{V}_{n, \epsilon_n}$, if $\tilde{w} := w \circ T_n$

\begin{equation}\label{eq:l2_comp}
\frac{1}{C} |w|_{\mathcal{V}_{n, \epsilon_n}} \le \Vert \tilde{w} \Vert_{L^2(\nu)} \le C |w|_{\mathcal{V}_{n, \epsilon_n}}.
\end{equation}

Step 1. We claim that given $T > 0$ there exists a constant $C_T < \infty$ for which

\begin{align}
&\sup_{n \in \mathbf{N}} \Vert \tilde{v}_n \Vert_{L^\infty([0, T], L^2(M))} \le C_T,\label{eq:L2_v}
\\ &\sup_{n \in \mathbf{N}} \Vert \frac{d\tilde{v}_n}{dt} \Vert_{L^\infty([0, T], L^2(M))} \le C_T.\label{eq:L2_deriv}
\end{align}
We start by proving~\eqref{eq:L2_v}. This is easy: Using the equation we obtain
\begin{align*}
\frac{d}{dt} \frac{1}{2} |v_n(t)|^2_{\mathcal{V}_{n, \epsilon_n}} &= -\langle v_n(t), \Delta_{n, \epsilon_n}v_n(t) \rangle_{\mathcal{V}_{n, \epsilon_n}}
\\ &=-\langle \nabla_{n, \epsilon_n} v_n(t), \nabla_{n, \epsilon_n} v_n(t) \rangle_{\mathcal{V}_{n, \epsilon_n}} \le 0.
\end{align*}
Thus after integrating in time and recalling~\eqref{eq:l2_comp} we easily obtain~\eqref{eq:L2_v}, once we recall that $u_0 \in C^{\infty}(M)$.
\\
To show~\eqref{eq:L2_deriv} we notice that $q_n(x,t) := \frac{d}{dt}v_n(x,t)$ is the unique solution to

\begin{equation}
\frac{d}{dt}q_n = -e^{-t\Delta_{n, \epsilon_n}}(\Delta_{n, \epsilon_n}u_0|_{G_n})\nonumber
\end{equation}
In particular, arguing as in the proof of~\eqref{eq:L2_v}, we get an $L^\infty$-bound on the $L^2$-norm of $q_n$. Namely, for any $T > 0$ and a possibly different constant $C_T$

\begin{equation}
\sup_{n \in \mathbf{N}} \Vert \tilde{q}_n \Vert_{L^{\infty}([0, T], L^2(M))} \le C_T.\nonumber
\end{equation}
From this, it is not hard to show that the map $t \to \tilde{v}_n(t)$ is weakly differentiable with derivative given by $t \to \tilde{q}_n(t)$. In particular~\eqref{eq:L2_deriv} follows at once.

Step 2. Compactness.
\\
We may apply Lemma~\ref{lem:energy_diss_graph} to obtain that for any $n \in \mathbf{N}$ and any $t > 0$

\begin{equation}\label{eq:dg_ineq}
E_n[v_n(t)] + \frac{1}{2}\int_0^t |\Delta_{n, \epsilon_n}v_n(s)|_{\mathcal{V}_{n, \epsilon_n}}^2ds + \frac{1}{2}\int_0^t |\frac{d}{ds}v_n(s)|^2_{\mathcal{V}_{n, \epsilon_n}}ds \le E_n[u_0|_{G_n}].
\end{equation}
Fix a time horizon $T > 0$ and a countable dense subset $\{t_p\}_{p \in \mathbf{N}} \subset [0, T]$. From~\eqref{eq:dg_ineq} with $t = t_j$ and the compactness property in Theorem~\ref{thm:gamma_dirichlet} we have that, for each $p \in \mathbf{N}$, the sequence $\tilde{v}_n(t_p)$ is precompact in $L^2(M)$. By a diagonal argument we can thus find a subsequence $n_j$ and functions $u_{t_p} \in L^2(M)$ such that

\begin{equation}
L^2(\nu)-\lim \tilde{v}_{n_j}(t_p) = u_{t_p}\ \forall p \in \mathbf{N}.\nonumber
\end{equation}
We claim that $v_{n_j}(t)$ is a Cauchy sequence in $L^2(\nu)$ for any $t \in [0, T)$. Indeed for $p \in \mathbf{N}$, $j, l \in \mathbf{N}$ using the triangle inequality and~\eqref{eq:L2_deriv} we have
\begin{align*}
&\Vert \tilde{v}_{n_{j+l}}(t) - \tilde{v}_{n_j}(t) \Vert_{L^2(\nu)}
\\ &\le\Vert \tilde{v}_{n_{j+l}}(t) - \tilde{v}_{n_{j+l}}(t_p) \Vert_{L^2(\nu)} + \Vert \tilde{v}_{n_{j+l}}(t_p) - \tilde{v}_{n_j}(t_p) \Vert_{L^2(\nu)} + \Vert \tilde{v}_{n_{j}}(t_p) - \tilde{v}_{n_j}(t) \Vert_{L^2(\nu)}
\\ & \le 2C_T |t-t_p| + \Vert \tilde{v}_{n_{j+l}}(t_p) - \tilde{v}_{n_j}(t_p) \Vert_{L^2(\nu)}.
\end{align*}
Now given $\gamma > 0$ select $p \in \mathbf{N}$ such that $|t-t_p| \le \frac{\gamma}{4C_T}$ and $j \in \mathbf{N}$ such that $\Vert \tilde{v}_{n_{j+l}}(t_p) - \tilde{v}_{n_j}(t_p) \Vert_{L^2(\nu)} \le \frac{\gamma}{2}$ for any $l \in \mathbf{N}$, then

\begin{equation}
\Vert \tilde{v}_{n_{j+l}}(t) - \tilde{v}_{n_j}(t) \Vert_{L^2(\nu)} \le \gamma\nonumber
\end{equation}
whenever $l \in \mathbf{N}$; thus $\tilde{v}_{n_j}(t) \to u_t \in L^2(M)$. Define $u(t) = u_t,\ t \in [0, T]$. We have just proved that

\begin{equation}
L^2(\nu)-\lim_{j \to +\infty} \tilde{v}_{n_j}(t) = u(t)\ \forall t \in [0, T].\nonumber
\end{equation}
We need to show that $u$ is characterized by~\eqref{eq:weak_sol}, this will be shown in Step 4 where we will pass to the limit into \eqref{eq:dg_ineq}. To this aim, we will need to be able to pass to the limit in the second and third terms on the left hand side of \eqref{eq:dg_ineq}. By~\eqref{eq:L2_v} and~\eqref{eq:L2_deriv} over a further, non-relabeled subsequence we have that there exists $v \in L^2([0, T], L^2(\nu))$ with $v' \in L^2([0, T], L^2(\nu))$ such that

\begin{equation}
\tilde{v}_{n_j} \xrightharpoonup{L^2(L^2)} v,\ \frac{d}{dt}\tilde{v}_{n_j} \xrightharpoonup{L^2(L^2)} \frac{d}{dt}v,\nonumber
\end{equation}
and by uniqueness of the limit this implies $u(t) = v(t)$. For later, we also record that

\begin{align}\label{eq:lsc_norm}
\liminf_{j \to +\infty} \int_0^t |\frac{d}{ds}v_{n_j}|^2_{\mathcal{V}_{n_j, \epsilon_{n_j}}} ds &\ge\int_0^t \int_M |\frac{d}{ds}u(s)|^2 \rho^2 d\nu.
\end{align}
This easily follows by the weak lower semicontinuity of the $L^2((0,t), L^2(M))$ norm once we observe that $d^{n_j, \epsilon_{n_j}} \circ T_{n_j} \frac{d}{dt}\tilde{v}^{n_j}$ converges weakly to $\rho u'$ in this space.

Step 3. We claim that $\Delta_{\rho^2}u \in L^2_{loc}((0, +\infty), L^2(\nu))$ and that for every $T>0$
\begin{equation}\label{eq:lsclap}
\liminf_{j \to +\infty}\int_0^T |\Delta_{n_j, \epsilon_{n_j}}v_n|^2_{\mathcal{V}_{{n_j, \epsilon_{n_j}}}}dt \ge \int_0^T c^2 |\Delta_{\rho^2} u|^2\rho^2 dx dt,
\end{equation}
where $c:= \frac{C_2}{2}$. 
\\
To show this we observe that from~\eqref{eq:dg_ineq} we obtain that up to taking a further subsequence, the functions $\widetilde{\Delta_{n_j, \epsilon_{n_j}}v_{n_j}} := \Delta_{n_j, \epsilon_{n_j}}v_{n_j} \circ T_{n_j}$ converge weakly in $L^2([0,T], L^2(\nu))$ to a function $w \in L^2([0,T], L^2(\nu))$. We claim that
\begin{equation}\label{eq:claim_lap}
\frac{1}{c}w = \Delta_{\rho^2} u\ \text{in the sense of distributions}.
\end{equation}
If~\eqref{eq:claim_lap} is true, then~\eqref{eq:lsclap} follows by the lower semicontinuity of the $L^2$-norm. To show~\eqref{eq:claim_lap} we take $f \in C_c^{\infty}((0,+\infty))$ and $g \in C^{\infty}(M)$. Then using Theorem~\ref{thm:conv_lap}, Lemma~\ref{lem:degrees_conv}, the convergence of the functions $\tilde{v}_{n_j}$ and using the fact that the Laplacian on the graph is self-adjoint we have
\begin{align*}
&\int_0^{+\infty}f(t)\int_M u(t)\left(\Delta_{\rho^2}g\right) \rho^2 d\volm dt 
\\ &= \lim_{j \to +\infty} \frac{1}{c}\int_0^{+\infty} f(t) \int_M \tilde{v}_{n_j}(t) \widetilde{\Delta_{n_j, \epsilon_{n_j}} g} \tilde{d}_{n_j} d\nu dt
\\ &= \lim_{j \to +\infty} \frac{1}{c} \int_0^{+\infty} f(t) \langle v_{n_j}, \Delta_{n_j, \epsilon_j}g \rangle_{\mathcal{V}_{n_j, \epsilon_{n_j}} }dt
\\ &= \lim_{j \to +\infty} \frac{1}{c} \int_0^{+\infty} f(t) \langle \Delta_{n_j, \epsilon_{n_j}}v_{n_j}, g \rangle_{\mathcal{V}_{n_j, \epsilon_{n_j}}}dt
\\ &= \lim_{j \to +\infty} \frac{1}{c} \int_0^{+\infty} f(t) \int_M \widetilde{\Delta_{n_j, \epsilon_{n_j}}v_{n_j}} \tilde{g}_{n_j} \tilde{d}_{n_j} d\nu dt.
\end{align*}
Observing that $\tilde{g}_{n_j} := g \circ T_{n_j}$ converges uniformly to $g$ we get that the last limit equals
\begin{equation*}
\frac{1}{c} \int_0^{+\infty} f(t) \int_M w(t) g \rho^2 d\volm dt.
\end{equation*}
In particular for every $f \in C^{\infty}_c((0, +\infty))$ and $g \in C^{\infty}(M)$ we have
\begin{equation*}
\int_0^{+\infty}f(t)\int_M u(t)\Delta_{\rho^2}g \rho^2 d\volm dt  = \frac{1}{c} \int_0^{+\infty} f(t) \int_M w(t) g \rho^2 d\volm dt,
\end{equation*}
which clearly gives~\eqref{eq:claim_lap}.

Step 4. Proof of~\eqref{eq:smooth_claim}.
 \\
 Using Theorem~\ref{thm:gamma_dirichlet}, the weak lower semicontinuity~\eqref{eq:lsc_norm} and the lower bound obtained in Step 3 we can pass to the limit in~\eqref{eq:dg_ineq} to get that for all times $t > 0$
 
\begin{align*}
\begin{aligned}
\frac{C_2}{2}E[u(t)] + \frac{1}{2}\int_0^t \int_{M} \left(\frac{C_2}{2}\right)^2|\Delta_{\rho^2}u|^2\rho^2dx ds + \frac{1}{2}\int_0^t \int_{M} |u'|^2\rho^2dx ds \le \frac{C_2}{2}E[u_0]
\end{aligned}
\end{align*}
In particular, applying Lemma~\ref{lem:energy_diss} with $c = C_2 / 2$ we see that $u$ is the unique solution to 

\begin{equation}
\begin{cases}
\partial_t u = -\frac{C_2}{2}\Delta_{\rho^2}u \\
u(0) = u_0
\end{cases}\nonumber
\end{equation}
which, in particular, implies that the limit is independent of the chosen subsequence, thus the whole sequence converges to $u$, as claimed.

Step 5. Conclusion.
\\
Let $T_n$ be a sequence of transportation maps obtained by applying Theorem~\ref{thm:transp_plan_existence}. By Proposition~\ref{prop:equivalent_tl2} we just need to show that for any $t>0$ the functions $e^{-t\Delta_{n, \epsilon_n}}u_n \circ T_n$ converge strongly to $e^{-t\frac{C_2}{2C_1}\Delta_{\rho^2}}u$ in $L^2(M)$. For $s > 0$ define $v_n(x, s) = e^{-s\Delta_{n, \epsilon_n}}u_n$. By differentiating the norm we have

\begin{equation*}
\frac{d}{ds} | v_{n} |_{\mathcal{V}_{n, \epsilon_n}}^2 = - |\nabla_{n} v_n|_{\mathcal{E}_{n, \epsilon_n}}^2.
\end{equation*}
Thus after integrating in $s$ and by using Fatou's Lemma we have that for a fixed $t > 0$

\begin{equation*}
\int_0^t \liminf_{n \to +\infty} |\nabla_n v_n(s)|_{\mathcal{E}_{n, \epsilon_n}}^2 ds \le C\Vert u \Vert_{L^2(\nu)}^2.
\end{equation*}
In particular there exist $0 < s < t$ and a subsequence $n_j$ such that
\begin{equation*}
\sup_{j \in \mathbf{N}} E_{n_j}[v_{n_j}(s)] < +\infty.
\end{equation*}
By Lemma~\ref{lem:energy_diss_graph} applied to $w_{n_j}(r) := e^{-r\Delta_{n_j, \epsilon_{n_j}}}v_{n_j}(s)$ we infer that
\begin{equation*}
\sup_{j \in \mathbf{N}} E_{n_j}[v_{n_j}(t)]  = \sup_{j \in \mathbf{N}} E_{n_j}[w_{n_j}(t-s)] < +\infty.
\end{equation*}
In particular, by the compactness statement of Theorem~\ref{thm:gamma_dirichlet} we obtain that, upon taking a further subsequence, the functions $v_{n_j}(t)$ converge strongly in $TL^2(M)$ to a function $v(t) \in L^2(M)$. We claim that $v = e^{-t\frac{C_2}{2C_1}\Delta_{\rho^2}}u$. To see this, let $g \in C^{\infty}(M)$. Setting $\tilde{v}_{n_j}(t) = v_{n_j}(t) \circ T_{n_j}$ and $\tilde{g}_{n_j} = g \circ T_{n_j}$,  using the fact that $\tilde{g}_{n_j}$ converges uniformly to $g$ and the fact that $e^{-t\Delta_{n, \epsilon_n}}$ are self-adjoint operators we get
\begin{align*}
\int_M v g \rho^2 d\volm &= \lim_{j\to +\infty} \int_M \tilde{v}_{n_j}(t) \tilde{g} \tilde{d}_{n_j} d\nu
\\ &= \lim_{j \to +\infty} \langle v_{n_j}(t), g \rangle_{\mathcal{V}_{n_j, \epsilon_{n_j}}}
\\ &= \lim_{j \to +\infty} \langle u_{n_j}, e^{-t\Delta_{n_j, \epsilon_{n_j}}}g \rangle_{\mathcal{V}_{n_j, \epsilon_{n_j}}}
\\ &=  \lim_{j \to +\infty} \int_M \tilde{u}_{n_j} \widetilde{e^{-t\Delta_{n_j, \epsilon_{n_j}}}g} \tilde{d}_{n_j}d\nu
\\ &= \int_M u e^{-t\frac{C_2}{2C_1}\Delta_{\rho^2}}g \rho^2 d\volm,
\end{align*}
where in the last step we used~\eqref{eq:smooth_claim}. Using the self-adjointness of the heat semigroup on $M$ we infer that for any smooth function $g \in C^{\infty}(M)$,

\begin{align*}
\int_M v g \rho^2 d\volm = \int_M e^{-t\frac{C_2}{2C_1}\Delta_{\rho^2}}u g \rho^2 d\volm.
\end{align*}
Thus $v = e^{-t\frac{C_2}{2C_1}\Delta_{\rho^2}}u$. In particular, the limit does not depend on the chosen subsequence, thus we obtain the claim.
\end{proof}

\subsection{Discrete-to-nonlocal}
\begin{proof}[Proof of Theorem~\ref{thm:discretenonloc}]
The proof follows from Theorem~\ref{thm:conv_heat}, Proposition~\ref{prop:weak_tlp} and the convergence of the degrees in Lemma~\ref{lem:degrees_conv}. The precompactness statement is a consequence of the general fact that bounded sets in $L^2$ are weakly precompact.
\end{proof}

\subsection{Bertozzi's question}

\begin{proof}[Proof of Corollary~\ref{cor:bertozzi_qst}]
By Theorem~\ref{thm:discretenonloc} we know that almost surely, for each $h > 0$,
\begin{equation}
\Gamma(TL^2(M)-\text{weak})-\lim_{n \to +\infty} E^h_{n, \epsilon_n} = \sqrt{\frac{C_1C_2}{2}}E_{\frac{C_2 h}{2C_1}}.
\end{equation}
By the same argument used in the proof of Theorem~\ref{thm:discretenonloc} we have that almost surely, for every $h > 0$, the sequence of energies
\begin{equation*}
D_{n, \epsilon_n}^h(u) := \frac{1}{\sqrt{h}}\sum_{i \neq j} \sigma_{ij}\langle u^i - \chi_n^i , e^{-h\Delta_{n, \epsilon_n}}(u^j - \chi_n^j)\rangle_{\mathcal{V}_n},\quad u \in \mathcal{M}_n,
\end{equation*}
$\Gamma$-converges in the weak-$TL^2(M)$ topology to the energy
\begin{equation*}
D_h(u) = \sqrt{\frac{C_1C_2}{2}}\frac{1}{\sqrt{\frac{C_2h}{2C_1}}}\sum_{i \neq j} \int_M (u^i - \chi^j) e^{-h\frac{C_2}{2C_1}\Delta_{\rho^2}}(u^j - \chi^j)\rho^2d\volm,\quad u \in \mathcal{M}.
\end{equation*}
In particular for every $h > 0$ we have
\begin{equation*}
\Gamma(TL^2(M)-\text{weak})-\lim_{n \to +\infty} (E_{n, \epsilon_n}^h - D_{n, \epsilon_n}^h) = \sqrt{\frac{C_1C_2}{2}}E_{\frac{C_2h}{2C_1}} - D_h.
\end{equation*}
This yields that the minimizers of $E_{n, \epsilon_n}^h - D_{n, \epsilon_n}^h$ converge weakly in $TL^2(M)$ to minimizers of $\sqrt{\frac{C_1C_2}{2}}E_{\frac{C_2h}{2C_1}} - D_h$. The conclusion is then a consequence of the minimizing movements interpretations in Lemma~\ref{lem:dissipationMBO} and Lemma~\ref{lem:minmovman}.
\end{proof}

\subsection{Nonlocal-to-local}
\begin{proof}[Proof of Theorem~\ref{thm:nonlocloc}]
$\Gamma$-$\limsup$. The $\Gamma$-$\limsup$ inequality is a consequence of the consistency part of the theorem, namely that for every $u \in BV(M, \{0,1\}^P) \cap \mathcal{M}$

\begin{equation}\label{eq:consistency_claim}
\lim_{h \downarrow 0} E_h(u) = E(u).
\end{equation}
It is clear that~\eqref{eq:consistency_claim} is a consequence of the following claim: Assume that $E, F \subset M$ are sets of finite perimeter, then

\begin{equation}\label{eq:consistency_two_sets}
\lim_{h \downarrow 0}\frac{1}{\sqrt{h}} \int_M \chi_F \left( \chi_E - e^{-h\Delta_\xi}\chi_E\right) d\mu = \frac{1}{\sqrt{\pi}}\int_{\partial^* E \cap \partial^* F} \langle \sigma_E(x), \sigma_F(x) \rangle_x |D\chi_F|_{\xi}(x).
\end{equation}
Indeed, simply apply~\eqref{eq:consistency_two_sets} to $E = \{u^i = 1\}$, $F=\{u^j = 1\}$, multiply by $\sigma_{ij}$ and sum over all pairs $i \neq j$, since then $\langle \sigma_E(x), \sigma_F(x) \rangle_x = -1$ on $\partial^*E \cap \partial^* F$. We now prove~\eqref{eq:consistency_two_sets} in four steps.

Step 1. Recalling the notation~\eqref{eq:heat_semig}, we can rewrite 

\begin{equation*}
T(h)\chi_E(x) = \chi_E(x) + \int_0^h \frac{d}{dt}T(t)\chi_E(x) dt.
\end{equation*}
Using Theorem~\ref{thm:finite_per_man}  we obtain that the argument of the limit in~\eqref{eq:consistency_two_sets} is equal to

\begin{align}
&\frac{1}{\sqrt{h}}\int_M \chi_F \int_0^h \Delta_\xi T(t)\chi_E(x)dt d\mu(x) \nonumber
\\ &= \frac{1}{\sqrt{h}}\int_{\partial^*F} \langle \sigma_{\chi_F}(x), \int_0^h\nabla T(t)\chi_E(x)dt\rangle d|D\chi_F|_{\xi}(x).\nonumber
\end{align}
Thus by the discussion in Remark~\ref{rem:red_bdry} it suffices to show that for every $x \in \partial^* E \cap \partial^*F$ such that $\sigma_E(x) = \langle \sigma_E(x), \sigma_F(x) \rangle_x \sigma_F(x)$ we have

\begin{equation}\label{eq:first_red}
\frac{1}{\sqrt{\pi}} \langle\sigma_F(x), \sigma_E(x)\rangle = \lim_{h\downarrow 0} \bigg\langle \sigma_F(x), \frac{1}{\sqrt{h}}\int_0^h \nabla T(t)\chi_E(x) dt \bigg\rangle.
\end{equation}

Step 2. We claim that for $s < 1/2$, equation~\eqref{eq:first_red} is equivalent to 

\begin{equation}\label{eq:second_red}
\frac{1}{\sqrt{\pi}} \langle\sigma_F(x), \sigma_E(x)\rangle = \lim_{h\downarrow 0} \bigg\langle \sigma_F(x), \frac{1}{\sqrt{h}}\int_0^h \nabla T(t)(\chi_{E \cap B_{h^s}(x)}(\cdot))(x) dt \bigg\rangle.
\end{equation}

To prove this equivalence, we fix $s < 1/2$ and use~\eqref{eq:gauss_up_bds_II} to show

\begin{equation}\label{eq:bound_g}
\lim_{h \downarrow 0} \frac{1}{\sqrt{h}}\int_0^h \int_{M\setminus B_{h^{s}}(x)} |\nabla_x p(t,x,y)|d\mu(y) = 0.
\end{equation}
Clearly~\eqref{eq:bound_g} then implies the equivalence between~\eqref{eq:first_red} and~\eqref{eq:second_red}. For $j \in \mathbf{N}$ and $t < h$ we denote by $B_j$ the ball $B_{2^{j}t^{s}}(x)$. Observe that $M\setminus B_{h^s}(x) \subset M\setminus B_{t^{s}}(x)$. To prove~\eqref{eq:bound_g} we use the Gaussian upper bound~\eqref{eq:gauss_up_bds_II} to estimate

\begin{align}
&\frac{1}{\sqrt{h}}\int_0^h \int_{M\setminus B_{h^{s}}(x)} |\nabla p(t,x,y)|d\mu(y) dt\nonumber
\\ &\le \sum_{j=0}^{\left[ \text{diam}(M)\right]}\frac{1}{\sqrt{h}}\int_0^h \int_{B_{j+1}\setminus B_{j}} \frac{\hat{C}_1}{\sqrt{t}\mu(B_{\sqrt{t}}(x))}\operatorname{exp}\left(\frac{-d^2(x, y)}{\hat{C}_2t}\right)d\mu(y)dt\nonumber
\\ &\le  \sum_{j=0}^{\left[ \text{diam}(M)\right]}\frac{\hat{C}_1}{\sqrt{h}}\int_0^h \frac{\mu(B_{j+1})}{\sqrt{t}\mu(B_{\sqrt{t}}(x))}\operatorname{exp}\left(-\frac{2^{2j}}{\hat{C}_2t^{1 - 2s}}\right)dt.\label{eq:quoti_balls}
\end{align}
Observe that the doubling property~\eqref{eq:doubling} gives $\frac{\mu(B_{j+1})}{\mu(B_{\sqrt{t}}(x))} \le3 \frac{2^{jN}t^{sN}}{t^{N/2}}$. Thus~\eqref{eq:quoti_balls} is estimated by

\begin{align*}
&\hat{C}_1 \sum_{j=0}^{[\operatorname{diam}(M)]} \frac{1}{\sqrt{h}} \int_0^h \frac{2^{jN}t^{Ns}}{\sqrt{t}t^{N/2}}\operatorname{exp}\left( -\frac{2^{2j}}{\hat{C}_2t^{1-2s}} \right)dt
\\ & = \hat{C}_1 \sum_{j=0}^{[\operatorname{diam}(M)]} 2^{jN} \int_0^h t^{-N/2 + sN - 1} \operatorname{exp}\left( -\frac{2^{2j}}{\hat{C}_2t^{1-2s}} \right)dt
\\ & \le \hat{C}_1 \sum_{j=0}^{[\operatorname{diam}(M)]} 2^{jN} \operatorname{exp}\left( -\frac{2^{2j}}{2\hat{C}_2h^{1-2s}}\right) \int_0^h t^{-N/2 + sN -1}\operatorname{exp}\left( -\frac{1}{2\hat{C}_2t^{1-2s}}\right)dt,
\end{align*}
which converges to zero as $h \downarrow 0$, since the integrand is uniformly bounded and the prefactor converges to zero as $h \downarrow 0$.

Step 3. We claim that 

\begin{align}\label{eq:claim_gauss_sub}
&\lim_{h \downarrow 0} \frac{1}{\sqrt{h}} \bigg\langle \sigma_F(x), \frac{1}{\sqrt{h}}\int_0^h \int_M \nabla_x p(t, x, y)\chi_{E \cap B_{h^s}(x)}(y) d\mu(y)\bigg\rangle
\\ &= \lim_{h\downarrow 0} \frac{1}{\sqrt{h}} \bigg\langle \sigma_F(x), \int_0^h\int_M \nabla_x \left( \frac{e^{-\frac{d(x,y)^2}{4t}}}{(4\pi t)^{k/2}}v_0(x,y)\right)\chi_{E \cap B_{h^s}(x)}(y) d\mu(y)  dt\bigg\rangle,\nonumber
\end{align}
where $v_0$ is the coefficient in the asymptotic expansion~\eqref{eq:asym_exp}.

To see this, observe that~\eqref{eq:asym_exp} applied with $l=1$ and some $N > \frac{k}{2} + l$ yields

\begin{align*}
&\lim_{h \downarrow 0} \frac{1}{\sqrt{h}} \bigg\langle \sigma_F(x), \frac{1}{\sqrt{h}}\int_0^h \int_M \nabla_x p(t, x, y)\chi_{E \cap B_{h^s}(x)}(y) d\mu(y)\bigg\rangle
\\ &= \lim_{h\downarrow 0} \frac{1}{\sqrt{h}} \sum_{j=0}^N\bigg\langle \sigma_F(x), \int_0^h\int_M \nabla_x \left( \frac{e^{-\frac{d(x,y)^2}{4t}}}{(4\pi t)^{k/2}}v_j(x,y)t^j\right)\chi_{E \cap B_{h^s}(x)}(y) d\mu(y) dt \bigg\rangle.
\end{align*}
Thus, all we need to show is that the limit as $h\downarrow 0$ of the terms on the right hand side corresponding to $j \ge 1$ vanishes, i.e., that for $j \ge 1$

\begin{equation}\label{eq:jge_vanish}
\lim_{h\downarrow 0} \frac{1}{\sqrt{h}}\bigg\langle \sigma_F(x), \int_0^h\int_M \nabla_x \left( \frac{e^{-\frac{d(x,y)^2}{4t}}}{(4\pi t)^{k/2}}v_j(x,y)t^j\right)\chi_{E \cap B_{h^s}(x)}(y) d\mu(y) \bigg\rangle = 0.
\end{equation}
To verify~\eqref{eq:jge_vanish}, we compute the argument in the limit in normal coordinates around $x$. Let $\Psi: B_R(x) \to B_R(\underline{o})$ be normal coordinates around $x$. Then $g_{ij}(\underline{o}) = \delta_{ij}$ is the identity matrix and $d^2(x, y) = |\Psi(y)|_k$. Writing $\nu_F$ for the vector of coordinates of $\sigma_F(x)$,  the argument of the limit may be written as

\begin{equation*}
\frac{1}{\sqrt{h}}\int_0^h \int_{B_{h^s}(x)} \nu_F \cdot \left( \frac{-z}{2t}v_j(x, \Phi(z)) + Dv_j(x, \Phi(z))\right) \frac{e^{-\frac{|z|^2}{4t}}}{(4\pi t)^{k/2}}t^j \chi_{\Psi(E)}(z)\gamma(z) dz dt,
\end{equation*}
where we set $\gamma(z) := \sqrt{\operatorname{det}(g)(z)} \rho(\Phi(z))$, with $\Phi = \Psi^{-1}$. By the smoothness of the coefficients $v_j$, by the compactness of the manifold and by the fact that $j \ge 1$, if $h < 1$ we can bound this integral by

\begin{equation}\label{eq:boiundlgeone}
Ch^{j-1} \frac{1}{\sqrt{h}}\int_0^h \int_{\mathbf{R}^k} \left( \frac{1}{2} + 1 \right) \frac{e^{-\frac{|z|^2}{4t}}}{(4\pi t)^{k/2}} dz dt \le C\sqrt{h},
\end{equation}
where $C$ is a constant depending on $v_j$, $M$ and $\xi$. Thus we have ~\eqref{eq:jge_vanish}.

Step 4. Conclusion. We now compute the limit on the right-hand side of~\eqref{eq:claim_gauss_sub}. As before, we work in normal coordinates centered at $x$. With the same notation as in Step 3, the argument of the limit may be rewritten as

\begin{align*}
&\frac{1}{\sqrt{h}} \bigg\langle \sigma_F(x), \int_0^h\int_M \nabla_x \left( \frac{e^{-\frac{d(x,y)^2}{4t}}}{(4\pi t)^{k/2}}v_0(x,y)\right)\chi_{E \cap B_{h^s}(x)}(y) d\mu(y)  dt\bigg\rangle
\\ & = \frac{1}{\sqrt{h}}\int_0^h \int_{B_{h^s}(x)} \nu_F \cdot \left( \frac{-z}{2t}v_0(x, \Phi(z)) + Dv_0(x, \Phi(z))\right) \frac{e^{-\frac{|z|^2}{4t}}}{(4\pi t)^{k/2}} \chi_{\Psi(E)}(z)\gamma(z) dz dt.
\end{align*}
As in Step 3, one can show that 

\begin{equation*}
\lim_{h \downarrow 0} \frac{1}{\sqrt{h}}\int_0^h \int_{B_{h^s}(x)} \nu_F \cdot Dv_0(x, \Phi(z))\frac{e^{-\frac{|z|^2}{4t}}}{(4\pi t)^{k/2}} \chi_{\Psi(E)}(z)\gamma(z) dz dt = 0.
\end{equation*}
Thus, all we need to show is that 

\begin{equation*}
\lim_{h \downarrow 0} -\frac{1}{\sqrt{h}}\int_0^h \int_{B_{h^s}(\underline{o})} \nu_F \cdot \frac{z}{2t}v_0(x, \Phi(z)) \frac{e^{-\frac{|z|^2}{4t}}}{(4\pi t)^{k/2}} \chi_{\Psi(E)}(z)\gamma(z) dz dt = \frac{1}{\sqrt{\pi}}\nu_E \cdot \nu_F.
\end{equation*}
This is essentially already done in~\cite{Miranda2007a}. We sketch the short argument for completeness. After a change of variables in space and time, the argument of the limit may be written as

\begin{equation*}
-\int_0^1 \frac{1}{\sqrt{t}}\int_{B_{\frac{h^s}{\sqrt{ht}}(\underline{o})}} \nu_F \cdot \frac{z}{2}v_0(x, \Phi(\sqrt{ht}z))\frac{e^{-\frac{|z|^2}{4}}}{(4\pi)^{k/2}}\chi_{\frac{\Psi(E)}{\sqrt{ht}}}\gamma(\sqrt{ht}z) dz dt.
\end{equation*}
By De Giorgi's structure theorem we have

\begin{equation}
L^1_{loc}-\lim_{h \downarrow 0} \chi_{\Psi(E)}({\sqrt{ht}}) = \chi_{H_{\nu_E}}\ \forall t \in [0,1],\nonumber
\end{equation}
where $H_{\nu_E}$ is the half space given by
\begin{equation}
H_{\nu_E} := \left\{ z \in \mathbf{R}^k:\ \nu_E \cdot z \le 0 \right\}.\nonumber
\end{equation}
By an application of the dominated convergence theorem we infer that on any compact set $K \subset \mathbf{R}^k$,
\begin{equation}
\lim_{h \downarrow 0} \int_0^1 \int_K |\chi_{\frac{\Psi(E)}{\sqrt{ht}}} - \chi _{H_{\nu_E}}| dz dt = 0.\nonumber
\end{equation}
In particular, upon taking a subsequence, we may assume that

\begin{equation}
\lim_{h \downarrow 0} \chi_{\frac{\Psi(E)}{\sqrt{ht}}}(z) = \chi_{H_{\nu_E}}(z)\ \text{for a.e.}\ (t,z) \in [0,1] \times \mathbf{R}^k.\nonumber
\end{equation}
Moreover we have that

\begin{equation*}
\lim_{h \downarrow 0} v_0(x, \Phi(\sqrt{ht}z))\gamma(\sqrt{ht}z) = 1,\ \text{uniformly in}\ t \in [0,1].
\end{equation*}
Thus by an application of the dominated convergence theorem we get
\begin{align*}
&\lim_{h \downarrow 0} -\int_0^1 \frac{1}{\sqrt{t}}\int_{B_{\frac{h^s}{\sqrt{ht}}(\underline{o})}} \nu_F \cdot \frac{z}{2}v_0(x, \Phi(\sqrt{ht}z))\frac{e^{-\frac{|z|^2}{4}}}{(4\pi)^{k/2}}\chi_{\frac{\Psi(E)}{\sqrt{ht}}}\gamma(\sqrt{ht}z) dz dt
\\ &= -\int_0^1 \frac{1}{2\sqrt{t}} \int_{\nu_E \cdot y \le 0} (y \cdot \nu_F) G_1(z) dz 
\\ & = \int_0^1 \frac{1}{2\sqrt{t}} \int_{\nu_E \cdot y \le 0} (\nu_F \cdot \nu_E) (y \cdot \nu_E)_{-} G_1(z) dz 
\\ & = \frac{1}{\sqrt{\pi}}(\nu_F \cdot \nu_E).
\end{align*}

$\Gamma$-$\liminf$. To prove the $\Gamma$-$\liminf$ inequality we use the blow-up method of Fonseca and M\"{u}ller~\cite{Fonseca1993} (see also~\cite{Alberti1998} and~\cite{Ambrosio2011}). 
\\
Given $u_h \in \mathcal{M}$ such that $u^h \to u \in \mathcal{M}$ in $L^1(M)$, we want to prove that for every sequence $h_n \downarrow 0$
\begin{equation}\label{eq:claim_gliminf_expl}
\liminf_{n \to +\infty} E_{h_n}(u_{h_n})  \ge E(u).
\end{equation}
Clearly, we may without loss of generality assume that the left hand side of~\eqref{eq:claim_gliminf_expl} is finite.

Step 1. $u \in BV(M, \{0,1\}^P)$. 

By Lemma~\ref{lemma:chart_fp} we just need to show that $u \circ \psi$ is in $BV(\psi(V))$ for every chart $(V, \psi)$ of $M$. It is clear that one can restrict to the case when $V = B_r(x_0)$, $r \le R < \frac{\operatorname{inj}(M)}{2}$, $x_0 \in M$ for some fixed $R$ and $\psi = \operatorname{exp}_{x_0}^{-1}$. The statement for a general chart then follows by compactness. So we fix $V = B_r(x_0)$ and $\psi = \operatorname{exp}_{x_0}^{-1}$. We observe that if $N \ge \frac{k}{2}$, by the asymptotic expansion for the heat kernel~\eqref{eq:asym_exp} with $l= 0$ and $t=h$ we get

\begin{align*}
E_{h_n}(u_{h_n}) &\ge \frac{1}{\sqrt{h_n}}\sum_{i,j}\sigma_{ij}\int_{B_r(x_0)} u_{h_n}^i e^{-h_n \Delta_{\xi}}u_{h_n}^j d\mu
\\ &\ge  \frac{1}{\sqrt{h_n}}\sum_{i,j}\sigma_{ij}\int_{B_r(x_0)} u_{h_n}^i(x) \int_{B_r(x_0)} p(h_n, x, y) u_{h_n}^j(y)d\mu(y) d\mu(x)
\\ &\begin{aligned}\ge &\sum_{l=0}^N \frac{1}{\sqrt{h_n}}\sum_{i,j}\sigma_{ij}\int_{B_r(x_0)} u_{h_n}^i(x) \int_{B_r(x_0)} \frac{e^{\frac{-d^2(x,y)}{4h_n}}}{(4\pi h_n)^{k/2}} v_l(x,y)h_n^lu_{h_n}^j(y)d\mu(y) d\mu(x)
\\ &-C_N\sqrt{h_n}
\end{aligned}
\end{align*}
If $l \ge 1$, with an estimate similar to the one used in~\eqref{eq:boiundlgeone} we obtain that

\begin{equation*}
\left| \frac{1}{\sqrt{h_n}}\sum_{i,j}\sigma_{ij}\int_{B_r(x_0)} u_{h_n}^i(x) \int_{B_r(x_0)} \frac{e^{\frac{-d^2(x,y)}{4h_n}}}{(4\pi h_n)^{k/2}} v_l(x,y)h_n^lu_{h_n}^j(y)d\mu(y) d\mu(x)\right| \le C\sqrt{h_n},
\end{equation*}
where $C$ depends on $\sigma, v_l, M$ and $\xi$. Thus we have

\begin{align*}
E_{h_n}(u_{h_n}) \ge &\frac{1}{\sqrt{h_n}}\sum_{i,j}\sigma_{ij}\int_{B_r(x_0)} u_{h_n}^i(x) \int_{B_r(x_0)} \frac{e^{\frac{-d^2(x,y)}{4h_n}}}{(4\pi h_n)^{k/2}} v_0(x,y)u_{h_n}^j(y)d\mu(y) d\mu(x)
\\ &-C\sqrt{h_n}.
\end{align*}
The first term on the right hand side may be rewritten in local coordinates as

\begin{equation}\label{eq:toest}
\frac{1}{\sqrt{h_n}}\sum_{i,j}\sigma_{ij}\int_{B_r(\underline{o})}\tilde{u}_{h_n}^i(x) \int_{B_r(\underline{o})} \frac{e^{\frac{-d^2(\psi^{-1}(x),\psi^{-1}(y))}{4h_n}}}{(4\pi h_n)^{k/2}} \tilde{v}_0(x,y)\tilde{u}_{h_n}^j(y)\gamma(y)dy\ \gamma(x)dx,
\end{equation}
with $\gamma(x) = \sqrt{\operatorname{det}(g)} \xi(\psi^{-1}(x))$, $\tilde{v}_0(x,y) = v_0(\psi^{-1}(x), \psi^{-1}(y)) $ and $\tilde{u} = u \circ \psi^{-1}$. Let $L$ be such that $d(\psi^{-1}(x), \psi^{-1}(y)) \le L |x-y|_k$. Then~\eqref{eq:toest} may be bounded from below by

\begin{equation*}
\frac{\inf_{x, y \in B_{r}(\underline{o})} \left\{ \tilde{v}_0(x,y)\gamma(y)\gamma(x)\right\}}{L^{k+1}}E^{euclid}_{\frac{h_n}{L^2}}(1_{B_r(\underline{o})}\tilde{u}_{h_n}),
\end{equation*}
where we set

\begin{equation*}
E^{euclid}_{\frac{h_n}{L^2}}(1_{B_r(\underline{o})}\tilde{u}_{h_n}) = \frac{1}{\sqrt{\frac{h_n}{L^2}}}\sum_{i,j}\sigma_{ij}\int_{\mathbf{R}^k}\tilde{u}_{h_n}^i(x) \int_{\mathbf{R}^k} \frac{e^{\frac{-L^2|x-y|^2}{4h_n}}}{(4\pi \frac{h_n}{L^2})^{k/2}} \tilde{u}_{h_n}^jdy\ dx.
\end{equation*}
In particular we obtain that

\begin{equation*}
+\infty > \liminf_{n \to +\infty} E^{euclid}_{\frac{h_n}{L^2}}(\mathbf{1}_{B_r(\underline{o})}\tilde{u}_{h_n}),
\end{equation*}
which says that $\mathbf{1}_{B_r(\underline{o})}\tilde{u}$ is in $BV(\mathbf{R}^k, \{0,1\}^P)$ by an application of, for example, Lemma A.4 in~\cite{Esedoglu2015}.

Step 2. We now turn to~\eqref{eq:claim_gliminf_expl}. By Step 1 we know that $u \in BV(M, \{0,1\}^P)$. We set $\Omega_i := \{ u^i = 1\}$. Passing to a subsequence if necessary, we may assume that

\begin{equation}\label{eq:limisliminf}
\lim_{n \to +\infty} E_{h_n}(u_{h_n}) = \liminf_{n \to +\infty} E_{h_n}(u_{h_n}) < +\infty.
\end{equation}
We define the Radon measures $\lambda_{h_n}^{ij}$ by setting

\begin{equation*}
\lambda_{h_n}^{ij}(W) := \frac{1}{\sqrt{h_n}}\sigma_{ij}\int_W u_{h_n}^i e^{-h\Delta_{\xi}}u_{h_n}^j d\mu,\ W \in \mathcal{B}(M).
\end{equation*}
Then by~\eqref{eq:limisliminf}, upon passing to a further subsequence, we may assume that there exist Radon measures $\lambda^{ij}$ such that

\begin{equation}\label{eq:weakconvstar}
\lim_{n \to +\infty} \lambda_{h_n}^{ij} = \lambda^{ij}\ \text{weakly-$*$ in the sense of Radon measures}.
\end{equation}
In particular we obtain that

\begin{align*}
\liminf_{n \to +\infty} E_{h_n}(u_{h_n}) = \liminf_{n \to +\infty} \sum_{i,j} \sigma_{ij}\lambda_{h_n}^{ij}(M) \ge  \sum_{i,j} \sigma_{ij}\lambda^{ij}(M) .
\end{align*}
Thus to conclude the proof of the $\Gamma$-$\liminf$ inequality it suffices to show the following:

It holds that if $\underline{x} \in \Sigma_{ij}$ then

\begin{equation}\label{eq:claimondensity}
\sum_{m} \sigma_{mq}\frac{d\lambda^{mq}}{{d|Du^i|_{\xi}}}(\underline{x}) \ge \frac{2\sigma_{ij}}{\sqrt{\pi}}.
\end{equation}
Indeed, if~\eqref{eq:claimondensity} is true, then using the fact that the interfaces $\Sigma_{ij} = \partial^* \Omega_i \cap \partial^* \Omega_j$ are disjoint

\begin{align*}
\left(\sum_{m, q} \sigma_{mq}\lambda^{mq}\right)(M) &\ge \sum_{i < j} \left(\sum_{m, q} \sigma_{mq}\lambda^{mq}\right)(\Sigma_{ij})
\\ &\ge \sum_{i < j} \int_{\Sigma_{ij}} \sum_{m, q} \sigma_{mq}\frac{d\lambda^{mq}}{d|Du^i|_{\xi}}d|Du^i|_{\xi}
\\ &\ge \sum_{i < j} \frac{2\sigma_{ij}}{\sqrt{\pi}}\int_{\Sigma_{ij}}d|Du^i|_{\xi}
\\ &= \frac{1}{\sqrt{\pi}}\sum_{i,j}\sigma_{ij}|Du^i|_{\xi}(\Sigma_{ij}).
\end{align*}

We now prove~\eqref{eq:claimondensity}. Fix $\delta > 0$, then there exists $R < \frac{\operatorname{inj}(M)}{2}$ such that for any $x \in M$

\begin{equation}\label{eq:lowerlipdelta}
y,z \in B_{\frac{R}{2}}(x) \Rightarrow  d(y,z) \le (1+\delta)|\operatorname{exp}_x^{-1}(y) - \operatorname{exp}_x^{-1}(z)|.
\end{equation}
Fix $i, j \in \{1, . . . , P\}$, with $i \neq j$ and $\underline{x} \in \Sigma_{ij}$. For every $m, q \in \{1, . . . , P\}$ with $m \neq q$ we have that

\begin{equation*}
\frac{d\lambda^{mq}}{d|Du^i|_{\xi}}(\underline{x}) = \lim_{r \downarrow 0} \frac{\lambda^{mq}(B_r(\underline{x}))}{|Du^i|_{\xi}(B_r(\underline{x}))}.
\end{equation*}
Observe also that, using Lemma~\ref{lemma:chart_fp} applied with $V = B_r(\underline{x})$ and $\psi(y) = \operatorname{exp}_{\underline{x}}^{-1}(y)$,

\begin{equation*}
\lim_{r\downarrow 0} \frac{|Du^i|_{\xi}(B_r(\underline{x}))}{\omega_{k-1}r^{k-1}\gamma(\underline{o})} = \lim_{r \downarrow 0} \frac{\int_{B_r(\underline{o})}\gamma d\mathcal{H}^{k-1}}{\omega_{k-1}r^{k-1}\gamma(\underline{o})} = 1.
\end{equation*}
In particular

\begin{equation*}
\frac{d\lambda^{mq}}{d|Du^i|_{\xi}}(\underline{x}) = \lim_{r \downarrow 0} \frac{\lambda^{mq}(B_r(\underline{x}))}{\omega_{k-1}r^{k-1}\gamma(\underline{o}) }.
\end{equation*}
Observe that  there exists an at most countable set $Q \subset \mathbf{R}$ such that if $ r \not \in Q$

\begin{equation*}
\lambda^{mq}(\partial B_r(\underline{x})) = 0.
\end{equation*}
Thus, by the weak convergence~\eqref{eq:weakconvstar} of the $\lambda_{h_n}^{mq}$ we have

\begin{equation*}
\frac{d\lambda^{mq}}{d|Du^i|_{\xi}}(\underline{x}) = \lim_{r \downarrow 0, r \not \in Q} \lim_{n \to +\infty} \frac{\lambda_{h_n}^{mq}(B_r(\underline{x}))}{\gamma(\underline{o}) \omega_{k-1}r^{k-1}}.
\end{equation*}
We now set $\tilde{u}_{h_n} = u_{h_n} \circ \operatorname{exp}_{\underline{x}}$. Given a measurable function $f$ defined on $B_r(\underline{o})$ we define the blow-up at scale $r$ as $R_rf(y) := f(ry),\ y \in B_1$. By De Giorgi's structure theorem we know that

\begin{align}
&\lim_{r \downarrow 0} R_r \tilde{u}^i = \chi_{H_{\nu^{(i)}}}\ \text{in}\ L^1(B_1),\label{eq:blowupi}
\\ &\lim_{r \downarrow 0} R_r \tilde{u}^j = \chi_{H_{\nu^{(j)}}}\ \text{in}\ L^1(B_1)\label{eq:blowupj}
\end{align}
where we define

\begin{equation*}
H_{\nu^{(m)}} := \{ z \in \mathbf{R}^k:\ z \cdot \nu^{(m)} \le 0\},\ m \in \{1, . . . , P\}.
\end{equation*}
Here $\nu^{(m)}$ is the outer unit normal of $\operatorname{exp}_{\underline{x}}^{-1}(\Omega_m) \subset \mathbf{R}^k$ at $\underline{o}$. Observe furthermore that for $q \neq i, j$ it holds that

\begin{equation*}
\lim_{r \downarrow 0} R_r \tilde{u}^q = 0\ \text{in}\ L^1(B_1).
\end{equation*}
Indeed, this follows by the constraint $\sum_m R_{r}u^m_{h_n} = 1$ and~\eqref{eq:blowupi},~\eqref{eq:blowupj}. Upon selecting a subsequence, we may thus choose a sequence $r_n$ of radii such that

\begin{align*}
&\lim_{n \to +\infty} r_n = \lim_{n \to +\infty} \frac{h_n}{r_n^2} = 0,
\\ &\lim_{n \to +\infty} \frac{\lambda^{mq}_{h_n}(B_{r_n}(\underline{x}))}{\omega_{k-1}r_n^{k-1}\gamma(\underline{o})} = \frac{d\lambda^{mq}}{d|Du^i|}(\underline{x}),
\\ &\lim_{n \to +\infty} R_{r_n}\tilde{u}^i_{h_n} = \chi_{H_{\nu^{(i)}}}\ \text{in}\ L^1(B_1),
\\ &\lim_{n \to +\infty} R_{r_n}\tilde{u}^j_{h_n} = \chi_{H_{\nu^{(j)}}}\ \text{in}\ L^1(B_1),
\\ &\lim_{n \to +\infty} R_{r_n}\tilde{u}^m_{h_n} = 0\ \text{in}\ L^1(B_1)\ \text{for}\ m \neq i, j.
\end{align*}
We now use once more the expansion~\eqref{eq:asym_exp} with some $N \ge \frac{k}{2}$ and observe that

\begin{align*}
&\bigg| \lambda_{h_n}^{mq}(B_{r_n}(\underline{x})) - \sum_{l=0}^N\frac{1}{\sqrt{h_n}}\int_{B_{r_n}(\underline{x})}u_{h_n}^m\int_{B_{r_n}(\underline{x})}\frac{e^{\frac{-d^2(x,y)}{4h_n}}}{(4\pi h_n)^{k/2}}v_l(x,y)h_n^lu_{h_n}^q(y)d\mu(y)d\mu(x) \bigg|
\\ &\le C\sqrt{h_n}r_n^{2k}.
\end{align*}
Moreover, similarly as for~\eqref{eq:boiundlgeone} we get that for $l \ge 1$

\begin{align*}
\left|\frac{1}{\sqrt{h_n}}\int_{B_{r_n}(\underline{x})}u_{h_n}^m\int_{B_{r_n}(\underline{x})}\frac{e^{\frac{-d^2(x,y)}{4h_n}}}{(4\pi h_n)^{k/2}}{v_l(x,y)h_n^lu_{h_n}^q(y)d\mu(y)d\mu(x)} \right|  \le C\sqrt{h_n}r_n^k.
\end{align*}
From these two estimates we conclude that

\begin{align*}
&\begin{aligned}
\sum_{m,q} \sigma_{mq}\frac{d\lambda^{mq}}{d|Du^i|_{\xi}}(\underline{x}) 
\end{aligned}
\\ &\begin{aligned}
= \lim_{n \to +\infty} \sum_{m,q} \sigma_{mq}&\frac{1}{\omega_{k-1}r_n^{k-1}\gamma(\underline{o})\sqrt{h_n}}\times
\\ &\int_{B_{r_n}(\underline{x})}u_{h_n}^m\int_{B_{r_n}(\underline{x})}\frac{e^{\frac{-d^2(x,y)}{4h_n}}}{(4\pi h_n)^{k/2}}{v_0(x,y)u_{h_n}^q(y)d\mu(y)d\mu(x)}.
\end{aligned}
\end{align*}
By~\eqref{eq:lowerlipdelta}, for $n$ large enough the previous limit may be estimated from below by

\begin{equation}\label{eq:toesteuclid}
\liminf_{n \to +\infty} \frac{c_n}{\gamma(\underline{o})\omega_{k-1}r_n^{k-1}\sqrt{h_n}}\sum_{m,q} \sigma_{mq}\int_{B_{r_n}(\underline{o})}\tilde{u}_{h_n}^m\int_{B_{r_n}(\underline{o})}\frac{e^{\frac{-(1+\delta)^2|x-y|^2}{4h_n}}}{(4\pi h_n)^{k/2}}\tilde{u}^q_{h_n}dy dx,
\end{equation}
where $\tilde{u} := u \circ \operatorname{exp}_{\underline{x}}$ and

\begin{equation*}
c_n := \inf_{x, y \in B_{r_n}(\underline{o})} \left\{ v_0(\operatorname{exp}_{\underline{x}}(x), \operatorname{exp}_{\underline{x}}(y))\gamma(x) \gamma(y) \right\}.
\end{equation*}
Observe that $c_n \to \gamma(\underline{o})$ as $n \to +\infty$. In particular~\eqref{eq:toesteuclid} equals

\begin{equation*}
\liminf_{n \to +\infty} \frac{1}{\omega_{k-1}r_n^{k-1}\sqrt{h_n}}\sum_{m,q} \sigma_{mq}\int_{B_{r_n}(\underline{o})}\tilde{u}_{h_n}^m\int_{B_{r_n}(\underline{o})}\frac{e^{\frac{-(1+\delta)^2|x-y|^2)}{4h_n}}}{(4\pi h_n)^{k/2}}\tilde{u}^q_{h_n}dy dx.
\end{equation*}
We now perform the changes of variables $x \mapsto r_nx$ and $y \mapsto r_ny$, so that the previous quantity is equal to

\begin{equation}\label{eq:beforegammaliminf}
\liminf_{n \to +\infty} \frac{1}{\omega_{k-1}(1+\delta)^{k+1}}E^{B_1}_{\frac{h_n}{r_n^2(1+\delta)^2}}(R_{r_n}\tilde{u}_{h_n}\mathbf{1}_{B_{1}}),
\end{equation}
where we define for $t>0$ and $f \in \mathcal{A}_{B_1} := \left\{ f: B_1 \to [0,1]^P\ \text{such that}\ \sum_m f^m = 1\right\}$

\begin{equation*}
E^{B_1}_{t}(f) := \sum_{mq} \sigma_{mq} \frac{1}{\sqrt{t}} \int_{B_1} f^m G_t *f^q dx.
\end{equation*}
Here $G_t$ denotes the standard $k$-dimensional Euclidean heat kernel at time $t$. Let $\beta \in C^{\infty}_c(B_1)$, $0 \le \beta \le 1$, then for $f \in \mathcal{A}_{B_1}$

\begin{equation}\label{eq:localized_th_en}
E^{B_1}_t(f) \ge E^{B_1}_t(f, \beta) := \sum_{m, q} \sigma_{mq} \frac{1}{\sqrt{t}} \int_{B_1}\beta f^m G_t *f^q dx.
\end{equation}
We record the following result, a proof of which is given in the \hyperlink{sec:appendix}{Appendix}.

\begin{theorem}\label{thm:gammaconvlocalized}
If $\sigma \in \mathbf{R}^{k \times k}$ is symmetric, $\sigma_{mm} = 0$, $\sigma$ satisfy the triangle inequality and $\beta \in C^{\infty}_c(B_1)$ with $\beta \ge 0$, then on $\mathcal{A}_{B_1}$
\begin{equation*}
\Gamma - \lim_{t \downarrow 0} E^{B_1}_t(\cdot, \beta) = E(\cdot, \beta)\ \text{in}\ L^1(B_1),
\end{equation*}
where we define, for $f \in \mathcal{A}_{B_1}$,
\begin{equation*}
E(u, \beta) := \begin{cases}
\frac{1}{\sqrt{\pi}}\sum_{m,q} \sigma_{mq}\int_{S_{mq}} \beta(x) d\mathcal{H}^{d-1}(x)\ &\text{if}\ f \in BV(B_1, \{0,1\}^P),
\\ +\infty\ &\text{otherwise}.
\end{cases}
\end{equation*}
Here, for $f \in BV(B_1, \{0,1\}^P)$, we set $S_{mp} := \partial^*\{f^m = 1\} \cap \partial^*\{f^q = 1\}$.
\end{theorem}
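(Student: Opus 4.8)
The plan is to derive the theorem from two facts established in the Appendix: the \emph{consistency} of the localized energy on sets of finite perimeter, and its \emph{approximate monotonicity} in the scale $t$ (Theorem~\ref{thm:monotonicitylocalized}), which degenerates to genuine monotonicity as $t\downarrow 0$. Combined, these yield for \emph{every} $f\in\mathcal A_{B_1}$ the pointwise limit
\begin{equation*}
\lim_{t\downarrow 0}E^{B_1}_t(f,\beta)=E(f,\beta)
\end{equation*}
(with the convention $E(f,\beta)=+\infty$ off $BV$). The $\Gamma$-$\limsup$ inequality is then immediate from the constant recovery sequence $f_t\equiv f$ (there being nothing to prove when $E(f,\beta)=+\infty$), while the $\Gamma$-$\liminf$ inequality and the $L^1(B_1)$-precompactness follow from the pointwise limit together with the approximate monotonicity, as below.

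\emph{Consistency.} For $f=(\chi_{\Omega_1},\dots,\chi_{\Omega_P})\in BV(B_1,\{0,1\}^P)\cap\mathcal A_{B_1}$, since $\sigma_{mm}=0$ it suffices to prove that for $m\neq q$
\begin{equation*}
\lim_{t\downarrow 0}\frac1{\sqrt t}\int_{B_1}\beta\,\chi_{\Omega_m}\,G_t*\chi_{\Omega_q}\,dx=\frac1{\sqrt\pi}\int_{S_{mq}}\beta\,d\mathcal H^{k-1}
\end{equation*}
and to sum against $\sigma_{mq}$ over ordered pairs (this is the flat, cut-off counterpart of~\eqref{eq:consistency_two_sets}). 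Writing $G_t*\chi_{\Omega_q}(x)=\int_{\mathbf R^k}G_1(z)\,\chi_{\Omega_q}(x-\sqrt t\,z)\,dz$ with $G_1(z)=(4\pi)^{-k/2}e^{-|z|^2/4}$, the left-hand side equals $\int_{\mathbf R^k}G_1(z)\bigl[\tfrac1{\sqrt t}\int_{\Omega_m\cap(\Omega_q+\sqrt t\,z)}\beta\,dx\bigr]\,dz$. Disjointness of $\Omega_m,\Omega_q$ gives the bound $\tfrac1{\sqrt t}\int_{\Omega_m\cap(\Omega_q+\sqrt t\,z)}\beta\,dx\le\|\beta\|_\infty|z|\operatorname{Per}(\Omega_q)$, which is $G_1$-integrable in $z$, so dominated convergence lets one pass the limit inside; and by De Giorgi's structure theorem (blow-up at $\mathcal H^{k-1}$-a.e.\ $x\in S_{mq}$, where $\Omega_m$ and $\Omega_q$ look like complementary half-spaces) the bracket converges to $\int_{S_{mq}}\beta(x)\,(\nu(x)\cdot z)_+\,d\mathcal H^{k-1}(x)$, with $\nu(x)$ the unit normal to $S_{mq}$ at $x$ pointing from $\Omega_q$ into $\Omega_m$. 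Hence the limit equals $\int_{S_{mq}}\beta(x)\bigl[\int G_1(z)(\nu(x)\cdot z)_+\,dz\bigr]\,d\mathcal H^{k-1}(x)=\tfrac1{\sqrt\pi}\int_{S_{mq}}\beta\,d\mathcal H^{k-1}$, using the one-dimensional Gaussian moment $\int G_1(z)(e\cdot z)_+\,dz=1/\sqrt\pi$ for any unit vector $e$; boundary effects near $\partial B_1$ are harmless since $\operatorname{supp}\beta\Subset B_1$. For $f\in\mathcal A_{B_1}\setminus BV(B_1,\{0,1\}^P)$ one has $\lim_{t\downarrow 0}E_t^{B_1}(f,\beta)=+\infty$: if $f$ is not $\{0,1\}^P$-valued on $\{\beta>0\}$ then, since $\sum_q f^q=1$, the integrand of $E_t^{B_1}(f,\beta)$ dominates $\beta\,f^m(1-f^m)$, whose $t^{-1/2}$-scaled integral diverges; if $f$ is $\{0,1\}^P$-valued but some $\Omega_m$ has infinite perimeter in $\{\beta>0\}$, then by the approximate monotonicity the limit equals the supremum over $t>0$, which is $+\infty$.

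\emph{$\Gamma$-$\liminf$.} Let $f_t\to f$ in $L^1(B_1)$; we may assume $L:=\liminf_{t\downarrow 0}E^{B_1}_t(f_t,\beta)<\infty$ and pass to a subsequence along which this is attained as a limit and the energies are bounded. Fix $s>0$. Theorem~\ref{thm:monotonicitylocalized}, in the form $E^{B_1}_s(g,\beta)\le E^{B_1}_t(g,\beta)+R(s,t)$ for $0<t\le s$ with $R(s,t)\le C\,\omega(s)\bigl(1+E^{B_1}_t(g,\beta)\bigr)$ for a modulus $\omega$ with $\omega(0^+)=0$, gives $E^{B_1}_s(f_t,\beta)\le E^{B_1}_t(f_t,\beta)+R(s,t)$ for $t\le s$. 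Moreover $E^{B_1}_s(\cdot,\beta)$ is continuous along $L^1(B_1)$-convergent sequences, since $f_t\to f$ in $L^1(B_1)$ forces $G_s*(f_t^q\mathbf 1_{B_1})\to G_s*(f^q\mathbf 1_{B_1})$ in $L^1$ and, along a further subsequence, $f_t\to f$ a.e., so dominated convergence yields $\lim_{t\downarrow 0}E^{B_1}_s(f_t,\beta)=E^{B_1}_s(f,\beta)$. Therefore
\begin{equation*}
L\ \ge\ \liminf_{t\downarrow 0}\bigl(E^{B_1}_s(f_t,\beta)-R(s,t)\bigr)\ \ge\ E^{B_1}_s(f,\beta)-C\,\omega(s)\,(1+L).
\end{equation*}
Letting $s\downarrow 0$ and invoking the pointwise limit gives $L\ge E(f,\beta)$; in particular $L<\infty$ forces $f\in BV(B_1,\{0,1\}^P)$ on $\{\beta>0\}$, which together with this bound yields the precompactness of families with $\sup_t E^{B_1}_t(u_t,\beta)<\infty$.

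The delicate ingredient is the approximate monotonicity of Theorem~\ref{thm:monotonicitylocalized}. On the flat torus or on $\mathbf R^k$ one has Esedo\={g}lu and Otto's exact monotonicity of the multiphase thresholding energy (\cite[Lemma~A.2]{Esedoglu2015}, where the triangle inequality for $\sigma$ enters decisively), but the spatial cut-off $\beta$ produces commutator terms in $\nabla\beta$ whose contribution must be shown to vanish as $t\downarrow 0$ --- a Gaussian estimate in the spirit of~\eqref{eq:monotonicity_relaxed}, and the technical core of the Appendix proof. Everything else above is routine once the flat cut-off version of~\eqref{eq:consistency_two_sets} is in place.
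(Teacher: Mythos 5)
Your overall plan follows the paper's sketch --- approximate monotonicity (Theorem~\ref{thm:monotonicitylocalized}) for the lower bound, consistency for the upper --- but with the consistency re-derived directly instead of cited from \cite{Laux2016}, which is a perfectly reasonable alternative; the blow-up computation for the pointwise limit on $BV$ partitions is essentially the flat, cut-off analogue of~\eqref{eq:consistency_two_sets} and the Gaussian moment $\int G_1(z)(e\cdot z)_+\,dz = 1/\sqrt\pi$ is computed correctly.

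The $\Gamma$-$\liminf$, however, has a genuine gap. You rewrite the monotonicity as $E_s^{B_1}(g,\beta)\le E_t^{B_1}(g,\beta)+R(s,t)$ with $R(s,t)\le C\,\omega(s)\bigl(1+E_t^{B_1}(g,\beta)\bigr)$, but this is not what Theorem~\ref{thm:monotonicitylocalized} gives. Its error term is $C\|D\beta\|_\infty\,\tilde E_t(g)\sqrt{s}$, where $\tilde E_t(g)=\frac{1}{\sqrt t}\sum_{i,j}\sigma_{ij}\int g^i\,k_t*g^j\,dx$ is the \emph{unlocalized} thresholding energy (no cut-off). And $\tilde E_t(g)$ is not controlled by the localized energy $E_t^{B_1}(g,\beta)$: if $\operatorname{supp}\beta$ is a small sub-ball of $B_1$, then $E_t^{B_1}(g,\beta)$ only sees the interfaces of $g$ there, while $\tilde E_t(g)$ sees them throughout $B_1$. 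So the chain $L\ge E_s^{B_1}(f,\beta)-C\omega(s)(1+L)$ does not follow, and the $\Gamma$-$\liminf$ as you argue it does not close. To salvage it one has to restrict to sequences for which $\sup_t\tilde E_t(f_t)<\infty$ is available as an extra hypothesis; this is precisely what the blow-up at~\eqref{eq:beforegammaliminf} provides, because there the unlocalized energy of the rescaled competitor is controlled by the original manifold thresholding energy $E_{h_n}(u_{h_n})$, which is assumed bounded. For the same reason the compactness you assert at the end from $\sup_t E_t^{B_1}(u_t,\beta)<\infty$ alone is not justified (and is not part of the statement anyway).

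A related remark: the ``pointwise limit for every $f\in\mathcal A_{B_1}$'' you announce at the start, specifically $\lim_t E_t^{B_1}(f,\beta)=+\infty$ whenever $f\notin BV(B_1,\{0,1\}^P)$, is not true without qualification. If $f$ is $\{0,1\}^P$-valued and has finite perimeter near $\operatorname{supp}\beta$ but infinite perimeter only far from $\operatorname{supp}\beta$, then $E(f,\beta)=+\infty$ by the definition while $E_t^{B_1}(f,\beta)$ can stay bounded as $t\downarrow 0$ because the Gaussian decay makes the faraway interfaces invisible. You partially notice this (you only argue the divergence when the infinite perimeter sits ``in $\{\beta>0\}$''), but that quietly weakens the claim. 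This tension is inherited from the statement of Theorem~\ref{thm:gammaconvlocalized} as written, so it is only partially your responsibility --- but it is precisely the localization that your error bound $R(s,t)\le C\omega(s)(1+E_t^{B_1}(g,\beta))$ ignores, and acknowledging it would have flagged the gap above.
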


In particular, we may use the $\Gamma$-$\liminf$ part of Theorem~\ref{thm:gammaconvlocalized} in~\eqref{eq:beforegammaliminf} to obtain that for any $\beta \in C^{\infty}_c(B_1), 0 \le \beta \le 1$ we have

\begin{align*}
\sum_{m,q} \sigma_{mq}\frac{d\lambda^{mq}}{{d|Du^i|_{\xi}}(\underline{x})} \ge \frac{1}{\sqrt{\pi}\omega_{k-1}(1+\delta)^{k+1}}\sigma_{ij}\bigg(&\int_{\{\nu^{(i)} \cdot x = 0\}} \beta(x) d\mathcal{H}^{d-1}(x) 
\\ &+\int_{\{\nu^{(j)} \cdot x = 0\}} \beta(x) d\mathcal{H}^{d-1}(x) \bigg).
\end{align*}
Taking the supremum over all such $\beta$ gives

\begin{equation*}
\sum_{m,q} \sigma_{mq}\frac{d\lambda^{mq}}{{d|Du^i|_{\xi}}(\underline{x})} \ge \frac{2\sigma_{ij}}{\sqrt{\pi}\omega_{k-1}(1+\delta)^{k+1}}\mathcal{H}^{k-1}(B_1^{(k-1)}) = \frac{2\sigma_{ij}}{\sqrt{\pi}(1+\delta)^{k+1}}.
\end{equation*}
The previous inequality holds for every $\delta>0$, thus if we let $\delta \downarrow 0$ we recover~\eqref{eq:claimondensity} and the proof of the $\Gamma$-$\limsup$ inequality is completed.

Compactness. To prove the last item of the theorem we proceed adapting the ideas of~\cite{Esedoglu2015} for the flat case. Fix $i \in \{1, . . . , P\}$ and define $m := \min_{j\neq i} \sigma_{ij}$. Then if $u \in \mathcal{M}$ we have that

\begin{align}
E_h(u) &= \sum_{i,j} \sigma_{ij} \int_M u^j e^{-h\Delta_\xi}u^i d\mu \nonumber
\\ &\ge m \int_M (1-u^i) e^{-h\Delta_{\xi}}u^i d\mu\nonumber
\\ &= \frac{m}{2} \int_{M \times M} p(h, x, y)((1-u^i(x))u^i(y) + u^i(x)(1-u^i(y))d\mu(y)d\mu(x)\label{eq:bound_thresh}
\\ &\ge  \frac{m}{2} \int_{M \times M} p(h, x, y)|u^i(y) - u^i(x)|d\mu(y)d\mu(x).\nonumber
\end{align}
We now fix $C>0$ to be determined later. By Stokes theorem $\int_M \nabla_x p(h,x,y) d\mu(y) = 0$, thus using the Gaussian upper bound~\eqref{eq:gauss_up_bds_II} and the Gaussian lower bound~\eqref{eq:gauss_up_bds_I} we observe

\begin{align*}
\int_M |De^{-Ch\Delta_{\xi}}u^i|_{\xi} &= \int_M |\nabla e^{-Ch\Delta_{\xi}}u^i|_x d\mu(x)
\\ &= \int_M \left| \int_M \nabla_x p(h, x, y) u^i(y) \right|_x d\mu(x)
\\ &\le \int_{M \times M} |\nabla_x p(h, x, y)|_x |u^i(y) - u^i(x)| d\mu(y)d\mu(x)
\\ &\le \frac{C_1}{Q_1}\int_{M \times M}\frac{\mu(B_{\sqrt{C_2Ch}}(x))}{\mu(B_{\sqrt{Ch}})}p\left(\frac{C_2Ch}{Q_2}, x, y\right)|u^i(y) - u^i(x)|d\mu(y)d\mu(x).
\end{align*}
If we take $C = \frac{Q_2}{C_2}$, using the doubling property~\eqref{eq:doubling} and the bound~\eqref{eq:bound_thresh} we end up with

\begin{align*}
\int_M |De^{-Ch\Delta_{\xi}}u^i|_{\xi} &\le \frac{C_1}{Q_1} \int_{M\times M} \frac{\mu(B_{\sqrt{Q_2h}}(x))}{\mu(B_{\sqrt{\frac{Q_2h}{C_2}}}(x))}p(h, x, y) |u^i(y) - u^i(x)|d\mu(y)d\mu(x)
\\ &\le \tilde{C} E_h(u).
\end{align*}
In particular, using this with $u = u_h$ we see that for every $i \in \{1,  . . .  ,P\}$

\begin{equation*}
\sup_{h > 0} \int_M |De^{-Ch\Delta_{\xi}}u_h^i|_{\xi} < +\infty.
\end{equation*}
By Lemma~\ref{lem:compactness_bv} we know that up to extracting a subsequence, for every $i \in \{1,  .  . . , P\}$ there exists $v^i \in BV(M)$ such that

\begin{equation*}
\lim_{h \downarrow 0} \Vert e^{-Ch\Delta_{\xi}}u_h^i - v^i\Vert_{L^1(M)} = 0.
\end{equation*}
The result now follows by observing that

\begin{equation*}
\lim_{h \downarrow 0} \Vert e^{-Ch\Delta_{\xi}}u_h^i - u^i_h \Vert_{L^1(M)} = 0.\qedhere
\end{equation*}
\end{proof}

\section{Appendix}\hypertarget{sec:appendix}{}

\subsection*{Proof of Theorem~\ref{thm:gamma_dirichlet}}

The proof is a slight modification of~\cite[Theorem 1.4]{GarciaTrillos2018}. The idea is still to reduce the problem to a nonlocal $\Gamma$-convergence result, namely to reduce it to the following statement.
\begin{theorem}\label{thm:nonlocal_local_dirichlet}
Let $M$ be a $k$-dimensional compact Riemannian submanifold of $\mathbf{R}^d$. Assume that $\eta$ is as in Section~\ref{sec:mainres}, let $\xi > 0$ be a smooth function on $M$. Given $\epsilon > 0$ and $u \in L^2(M)$ define

\begin{equation}
G_{\epsilon}(u) := \frac{1}{\epsilon^2} \int_{M \times M} \frac{1}{\epsilon^k}\eta\left( \frac{|x-y|_{d}}{\epsilon}\right)(u(x) - u(y))^2\xi(x)\xi(y)d\volm(x)d\volm(y),\nonumber
\end{equation}
where $|\cdot |_d$ denotes the Euclidean distance in $\mathbf{R}^d$. Then

\begin{equation}
\Gamma - \lim_{\epsilon \downarrow 0}G_{\epsilon} = 2C_2E,\nonumber
\end{equation}
where $E$ is the Dirichlet energy~\eqref{eq:dirichlet_energy}. Moreover, for any $u \in C^{\infty}(M)$ we have that $\limsup_{\epsilon \downarrow 0} G_{\epsilon}(u) \le 2C_2E(u)$. Finally, we have the following compactness property: if $\epsilon_n \downarrow 0$ and $u_n$ are such that

\begin{equation}
\sup_{n \in \mathbf{N}} G_{\epsilon_n}(u_n) < +\infty,\ \sup_{n \in \mathbf{N}} \Vert u_n \Vert_{L^2(M)} < +\infty,\nonumber
\end{equation}
then the sequence $u_n$ is precompact in $L^2(M)$.
\end{theorem}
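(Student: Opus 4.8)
The plan is to deduce the statement from its flat, weighted counterpart, García Trillos--Slep\v{c}ev \cite[Theorem 1.4]{GarciaTrillos2018}, by a localization argument over a finite atlas of normal-coordinate charts. Three things must be checked: the $\Gamma$-$\liminf$ inequality, the recovery ($\Gamma$-$\limsup$) inequality, and the compactness property. For the recovery part, since $C^{\infty}(M)$ is dense in energy in $H^{1}(M)$ (mollification on the compact manifold $M$) and $v\mapsto\big(\Gamma\text{-}\limsup_{\epsilon}G_{\epsilon}\big)(v)$ is lower semicontinuous on $L^{2}(M)$, it suffices to treat $u\in C^{\infty}(M)$ with the constant sequence $u_{\epsilon}\equiv u$, i.e.\ to prove the ``Moreover'' claim $\limsup_{\epsilon\downarrow 0}G_{\epsilon}(u)\le 2C_{2}E(u)$. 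The geometric input is the same in all three parts: cover $M$ by finitely many charts $\operatorname{exp}_{x_{l}}\colon B_{r}(\underline{o})\to B_{r}(x_{l})$ on which the push-forward of $\volm$ has a smooth positive density, and on which, for $x=\operatorname{exp}_{x_{l}}(\zeta)$, $y=\operatorname{exp}_{x_{l}}(\zeta')$ with $\zeta,\zeta'\in B_{r}(\underline{o})$,
\begin{equation*}
|\zeta-\zeta'|_{k}\ \le\ |x-y|_{d}\ \le\ (1+Cr^{2})\,|\zeta-\zeta'|_{k},
\end{equation*}
because $M\subset\mathbf{R}^{d}$ is a smooth submanifold. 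Since $\eta$ is nonincreasing, these bounds sandwich $G_{\epsilon}$ --- restricted to pairs lying in a common chart --- between the flat weighted nonlocal functionals built with the kernels $z\mapsto\eta(|z|_{k})$ and $z\mapsto\eta\big((1+Cr^{2})|z|_{k}\big)$; the second moment of the dilated kernel is $(1+Cr^{2})^{-(k+2)}C_{2}\to C_{2}$ as $r\downarrow 0$, so all constants will match in the iterated limit.

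\emph{$\Gamma$-$\liminf$.} Let $u_{n}\to u$ in $L^{2}(M)$ with $\ell:=\liminf_{n}G_{\epsilon_{n}}(u_{n})<+\infty$. Fix $\delta>0$ and choose finitely many pairwise disjoint balls $B_{1},\dots,B_{m}$, each $\subset\subset$ some chart, with $\volm\big(M\setminus\bigcup_{q}B_{q}\big)<\delta$. Since the integrand in $G_{\epsilon_{n}}$ is nonnegative, $G_{\epsilon_{n}}(u_{n})\ge\sum_{q}\epsilon_{n}^{-k-2}\int_{B_{q}}\int_{B_{q}}\eta(|x-y|_{d}/\epsilon_{n})(u_{n}(x)-u_{n}(y))^{2}\xi(x)\xi(y)$, and by the distance sandwich each term is at least $(1-o_{n}(1))$ times the flat weighted nonlocal functional of $u_{n}\circ\operatorname{exp}_{x_{q}}$ on $\operatorname{exp}_{x_{q}}^{-1}(B_{q})$ with the dilated kernel. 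Along the subsequence realizing $\ell$ these flat energies are bounded, so the $\Gamma$-$\liminf$ part of \cite[Theorem 1.4]{GarciaTrillos2018} forces $u$ to be $H^{1}$ on each $B_{q}$ and gives $\liminf_{n}G_{\epsilon_{n}}(u_{n})\ge(1+Cr^{2})^{-(k+2)}\sum_{q}2C_{2}\cdot\tfrac12\int_{B_{q}}|\nabla u|^{2}\xi^{2}\,d\volm$. Letting $\delta\downarrow 0$, then $r\downarrow 0$, and patching the local $H^{1}$ bounds yields $u\in H^{1}(M)$ and $\ell\ge 2C_{2}E(u)$.

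\emph{$\Gamma$-$\limsup$.} Let $u\in C^{\infty}(M)$, $u_{\epsilon}\equiv u$. Fix $\theta$ below the Lebesgue number of the cover and split $G_{\epsilon}(u)$ according to whether $|x-y|_{d}<\theta$ or $|x-y|_{d}\ge\theta$. The far part is $\le C\epsilon^{-k-2}\eta(\theta/\epsilon)\to 0$ by the exponential decay of $\eta$. In the near part every admissible pair lies in a common chart; inserting a partition of unity $\{\psi_{l}\}$ subordinate to the cover, passing to normal coordinates, using the Taylor expansion $u(x)-u(y)=\nabla u(x)\cdot(\zeta'-\zeta)+O(|\zeta-\zeta'|^{2})$ together with the distance and volume comparisons, and integrating the rescaled difference variable against $\eta$, the near part converges to $C_{2}\int_{M}|\nabla u|^{2}\xi^{2}\,d\volm=2C_{2}E(u)$: the geometric $O(r^{2})$ errors and the Taylor remainder contribute only $o(1)$ since $\eta$ has finite moments. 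Hence $\limsup_{\epsilon\downarrow 0}G_{\epsilon}(u)\le 2C_{2}E(u)$ for smooth $u$, which by the reduction above gives the full $\Gamma$-$\limsup$ inequality (and, in fact, equality along the constant sequence for smooth $u$).

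\emph{Compactness, and the main obstacle.} Suppose $\sup_{n}G_{\epsilon_{n}}(u_{n})<\infty$ and $\sup_{n}\|u_{n}\|_{L^{2}(M)}<\infty$. As $\eta(0)>0$ and $\eta$ is continuous at $0$, there are $t_{0},c_{0}>0$ with $\eta\ge c_{0}$ on $[0,t_{0}]$; hence in any chart and ball $B\subset\subset V_{l}$, in normal coordinates around $x_{l}$,
\begin{equation*}
\frac{1}{\epsilon_{n}^{k+2}}\int_{\operatorname{exp}_{x_{l}}^{-1}(B)}\int_{\operatorname{exp}_{x_{l}}^{-1}(B)}\mathbf{1}_{\{|\zeta-\zeta'|_{k}\le t_{0}\epsilon_{n}\}}\,(\tilde u_{n}(\zeta)-\tilde u_{n}(\zeta'))^{2}\,d\zeta\,d\zeta'\ \le\ C\,G_{\epsilon_{n}}(u_{n})\ \le\ C'.
\end{equation*}
Mollifying $\tilde u_{n}$ at scale $\epsilon_{n}$, this (with the $L^{2}$ bound) gives a uniform $H^{1}$ bound for the mollifications and an $o(1)$ $L^{2}$-distance to $\tilde u_{n}$, so the Kolmogorov--Riesz--Fr\'echet criterion yields precompactness of $\{\tilde u_{n}\}$ in $L^{2}(B)$; a finite cover plus a diagonal extraction gives precompactness of $\{u_{n}\}$ in $L^{2}(M)$. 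I expect the real work to be the localization bookkeeping: verifying that discarding out-of-chart contributions in the $\Gamma$-$\liminf$ is harmless, that the distance distortion enters the kernel in the direction forced by monotonicity of $\eta$ with the constant $(1+Cr^{2})^{-(k+2)}C_{2}\to C_{2}$, that the normalization of \cite[Theorem 1.4]{GarciaTrillos2018} produces exactly the constant $2C_{2}$, and that the rescaled-kernel estimate above converts into a translation-equicontinuity bound uniform in $n$.
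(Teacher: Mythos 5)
Your overall strategy --- localize onto a finite atlas, compare the extrinsic Euclidean distance with the coordinate distance, and reduce to the flat weighted nonlocal $\Gamma$-convergence result --- is the same as the paper's, and your $\Gamma$-$\liminf$ argument (disjoint balls, Fatou, flat theorem on each piece, send the overlap measure and the chart radius to zero) tracks the paper's closely. Where you differ: for the $\Gamma$-$\limsup$ the paper again invokes the flat theorem, after constructing a cover by \emph{graph parametrizations} $\Psi(y)=(y,\gamma(y))$, which satisfy the exact inequality $|\Psi(y_1)-\Psi(y_2)|_d\ge|y_1-y_2|_k$ and therefore give the right-way monotonicity of $\eta$ without any $r$-dependent error; the upper bound is then closed by shrinking the charts so that $\sum_i\int_{W_i}|\nabla u|^2\xi^2\,d\volm$ exceeds the manifold integral by at most $\alpha$ (via a Minkowski-content estimate). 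You instead insert a partition of unity and Taylor-expand $u$ in normal coordinates, computing the limit directly; this is a legitimate alternative and avoids an appeal to the flat theorem, at the cost of having to track the $O(r^2)$ geometric errors explicitly and then send $r\downarrow 0$. Your compactness argument (mollify at scale $\epsilon_n$, extract a uniform $H^1$ bound and an $o(1)$ $L^2$-discrepancy, apply Kolmogorov--Riesz) is also more explicit than the paper, which simply cites the flat compactness.

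One concrete slip: your distance sandwich
\begin{equation*}
|\zeta-\zeta'|_k\le|x-y|_d\le(1+Cr^2)|\zeta-\zeta'|_k
\end{equation*}
is false on the left. In normal coordinates one only has $|x-y|_d\le d_M(x,y)$ and, for small $r$, $(1-Cr^2)|\zeta-\zeta'|_k\le|x-y|_d\le(1+Cr^2)|\zeta-\zeta'|_k$; already on $S^1$ the extrinsic chord $|x-y|_d=2|\sin((\zeta-\zeta')/2)|$ is \emph{strictly smaller} than $|\zeta-\zeta'|$. Since your $\liminf$ only uses the right-hand comparison (via the dilated kernel $\eta((1+Cr^2)|z|_k)$) and your $\limsup$ is done by direct expansion rather than by the claimed upper flat bound, the false inequality is not load-bearing; still, if you want to phrase the $\limsup$ via the flat theorem as well, you must replace the left inequality by $(1-Cr^2)|\zeta-\zeta'|_k\le|x-y|_d$ and carry the resulting $(1-Cr^2)^{-(k+2)}$ factor to zero. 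This is exactly the bookkeeping the paper sidesteps by choosing 1-Lipschitz charts for the $\liminf$ and graph charts for the $\limsup$, so that the needed one-sided comparison holds with constant exactly $1$. Also note that the paper's flat result you should be invoking is its own Theorem on the weighted nonlocal-to-local convergence on a Euclidean domain (stated as a separate statement in the Appendix), not the discrete-to-continuum Theorem 1.4 of Garc\'ia Trillos--Slep\v{c}ev, which is a different step of the argument.
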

To reduce the proof of Theorem~\ref{thm:gamma_dirichlet} to Theorem~\ref{thm:nonlocal_local_dirichlet} one proceeds along the same lines of the proof of~\cite[Theorem 1.4]{GarciaTrillos2018}. Let $T_n$ be the optimal transport maps obtained by applying Theorem \ref{thm:transp_plan_existence}. To follow the proof of~\cite[Theorem 1.4]{GarciaTrillos2018} the only additional observation is that, since $M$ is a Riemannian submanifold of $\mathbf{R}^d$, for any $x, y \in M$ we have

\begin{equation}
|x-y|_d \le d_M(x, y).\nonumber
\end{equation}
In particular this yields

\begin{equation}
\Vert I - T_n \Vert_{\infty} \le \sup_{x \in M} d_M(x, T_n(x)).\nonumber
\end{equation}
This will give the reduction to Theorem~\ref{thm:gamma_dirichlet}. In the case $k = 2$ the previous argument works as long as one assumes
\begin{equation*}
\frac{\epsilon_n n^{1/2}}{\log^{3/4}(n)} \gg 1.
\end{equation*}
The extra logarithmic factor may be removed by using \cite[Proposition 2.11]{Calder2022}.

We are left with proving Theorem~\ref{thm:nonlocal_local_dirichlet}. This can in turn be deduced from the corresponding result in the Euclidean case, namely the following.
\begin{theorem}\label{thm:nonlocal_local_euclidean}
Let $D \subset \mathbf{R}^k$ be a bounded open set with smooth boundary, let $\tilde{\xi}: \overline{D} \to (0, +\infty)$ be a smooth function. Then
\begin{equation}
\Gamma-\lim_{\epsilon \downarrow 0} \tilde{G}^{D, \tilde{\xi}}_{\epsilon} = 2C_2\tilde{E}^{D, \tilde{\xi}}\ \text{in}\ L^2(D),\nonumber
\end{equation}
and $\lim_{\epsilon \downarrow 0} \tilde{G}^{D, \tilde{\xi}}_{\epsilon}(u) = 2C_2\tilde{E}^{D, \tilde{\xi}}(u)$ whenever $u \in L^2(\overline{D})$, where we set for $u \in L^2(D)$

\begin{equation}
\tilde{G}^{D, \tilde{\xi}}_{\epsilon}(u) = \frac{1}{\epsilon^{k+2}} \int_{D\times D}\eta\left(\frac{|x-y|_k}{\epsilon}\right)|u(x) - u(y)|^2 \tilde{\xi}(x)\tilde{\xi}(y) dx dy,\nonumber
\end{equation}
and
\begin{equation}
\tilde{E}^{D, \tilde{\xi}}(u) = \begin{cases}
\frac{1}{2}\int_{D} |Du|^2\tilde{\xi}^2 dx\ &\text{if}\ u \in H^1(D)
\\ +\infty\ &\text{otherwise}.
\end{cases}\nonumber
\end{equation}
Finally, the following compactness property holds true: if $\epsilon_n \downarrow 0$ and $u_n$ are such that

\begin{equation}
\sup_{n \in \mathbf{N}} \tilde{G}^{D, \tilde{\xi}}_{\epsilon_n}(u_n) < +\infty,\ \sup_{n \in \mathbf{N}} \Vert u_n \Vert_{L^2(D)} < +\infty\nonumber
\end{equation}
then the sequence $u_n$ is precompact in $L^2(D)$.
\end{theorem}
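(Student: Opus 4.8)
The plan is to deduce Theorem~\ref{thm:nonlocal_local_euclidean} from the classical asymptotics of nonlocal functionals due to Bourgain, Brezis and Mironescu and to Ponce, handling the variable weight $\tilde\xi$ by a freezing argument. Setting $\rho_\epsilon(z):=\frac{1}{\epsilon^{k+2}}\eta(|z|_k/\epsilon)|z|_k^2$, the substitution $z=\epsilon w$ shows that $\int_{\mathbf{R}^k}\rho_\epsilon(z)\,dz=\int_{\mathbf{R}^k}\eta(|w|_k)|w|_k^2\,dw$ is independent of $\epsilon$ and that $\int_{\{|z|_k>\delta\}}\rho_\epsilon\to0$ for each $\delta>0$, so $\{\rho_\epsilon\}$ is an admissible family of radial mollifiers and, for any bounded open $D'\subset\mathbf{R}^k$, $\tilde G^{D',1}_\epsilon(u)=\int_{D'\times D'}\rho_\epsilon(x-y)\,|u(x)-u(y)|^2|x-y|_k^{-2}\,dx\,dy$. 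Hence the case $\tilde\xi\equiv1$ of the theorem --- including the compactness statement --- is exactly the Bourgain--Brezis--Mironescu--Ponce result on a smooth bounded domain, and in particular on a ball, the constant being $2C_2$ because $\int_{\mathbf{R}^k}\eta(|z|_k)(v\cdot z)^2\,dz=C_2|v|_k^2$ for $v\in\mathbf{R}^k$ (the second moment matrix of the radial weight is $C_2$ times the identity). It then remains to pass from $\tilde\xi\equiv1$ to a general weight, using only that $\tilde\xi$ is continuous on $\overline D$ with $0<\inf\tilde\xi\le\sup\tilde\xi<\infty$.

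Compactness and the $\limsup$/pointwise bound are the routine parts. Compactness follows at once: since $\tilde\xi\ge c_0>0$, a bound on $\tilde G^{D,\tilde\xi}_{\epsilon_n}(u_n)$ gives a bound on $\tilde G^{D,1}_{\epsilon_n}(u_n)$, and the unweighted compactness theorem on the smooth bounded domain $D$ applies. For the upper bound I would first take $u\in C^1(\overline D)$, write $u(y)-u(x)=\nabla u(x)\cdot(y-x)+O(|y-x|_k^2)$, substitute $y=x+\epsilon z$ in the inner integral, and use the exponential decay of $\eta$ to show that both the boundary layer $\{\operatorname{dist}(\cdot,\partial D)\lesssim\sqrt\epsilon\}$ and the range $|x-y|_k\gtrsim\sqrt\epsilon$ contribute $o(1)$; dominated convergence then yields $\lim_{\epsilon\downarrow0}\tilde G^{D,\tilde\xi}_\epsilon(u)=C_2\int_D|\nabla u|_k^2\,\tilde\xi^2\,dx=2C_2\tilde E^{D,\tilde\xi}(u)$. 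For general $u\in H^1(D)$ one extends $u$ to $\hat u\in H^1(\mathbf{R}^k)$ (legitimate since $\partial D$ is smooth), sets $u_\delta:=(\hat u*\varphi_\delta)|_D\in C^\infty(\overline D)$, and combines the smooth case with the $\epsilon$-uniform bound $\tilde G^{D,\tilde\xi}_\epsilon(v)\le C\|\nabla\hat v\|_{L^2(\mathbf{R}^k)}^2$ (a consequence of $|\hat v(x+h)-\hat v(x)|^2\le|h|_k^2\int_0^1|\nabla\hat v(x+th)|_k^2\,dt$ integrated in $x$) together with $|a+b|^2\le(1+\tau)|a|^2+(1+\tau^{-1})|b|^2$; letting $\delta\downarrow0$ and then $\tau\downarrow0$ gives $\limsup_\epsilon\tilde G^{D,\tilde\xi}_\epsilon(u)\le2C_2\tilde E^{D,\tilde\xi}(u)$. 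Since the matching lower bound for the constant sequence $u_\epsilon\equiv u$ is contained in the $\liminf$ inequality below, this also proves the pointwise limit (with value $+\infty$ when $u\notin H^1(D)$).

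The heart of the matter is the $\Gamma$-$\liminf$ inequality. Given $u_\epsilon\to u$ in $L^2(D)$ with $L:=\liminf_\epsilon\tilde G^{D,\tilde\xi}_\epsilon(u_\epsilon)<\infty$, pass to a subsequence realizing the liminf and introduce the finite measures $\pi_\epsilon$ on $\overline D$ defined by $d\pi_\epsilon(x):=\tilde\xi(x)\big(\frac{1}{\epsilon^{k+2}}\int_D\eta(|x-y|_k/\epsilon)|u_\epsilon(x)-u_\epsilon(y)|^2\tilde\xi(y)\,dy\big)\,dx$, so that $\pi_\epsilon(\overline D)=\tilde G^{D,\tilde\xi}_\epsilon(u_\epsilon)\to L$; along a further subsequence $\pi_\epsilon\rightharpoonup^*\pi$, and $L=\pi(\overline D)\ge\int_D\frac{d\pi}{d\mathcal{L}^k}\,dx$. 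For a ball $B_r(x_0)\Subset D$ whose boundary is $\pi$-null one has $\pi(B_r(x_0))=\lim_\epsilon\pi_\epsilon(B_r(x_0))$; restricting the inner integral to $y\in B_r(x_0)$ and freezing the weight gives $\pi_\epsilon(B_r(x_0))\ge(\inf_{B_r(x_0)}\tilde\xi)^2\,\tilde G^{B_r(x_0),1}_\epsilon(u_\epsilon|_{B_r(x_0)})$, and the unweighted $\liminf$ inequality on the ball from the Bourgain--Brezis--Mironescu--Ponce theorem (applicable because $u_\epsilon|_{B_r(x_0)}\to u|_{B_r(x_0)}$ in $L^2$, and which forces $u|_{B_r(x_0)}\in H^1$ since $L<\infty$, hence $u\in H^1_{\mathrm{loc}}(D)$) yields $\pi(B_r(x_0))\ge(\inf_{B_r(x_0)}\tilde\xi)^2\,C_2\int_{B_r(x_0)}|\nabla u|_k^2\,dx$, first for $\pi$-admissible $r$ and then, by monotonicity of $r\mapsto\pi(B_r(x_0))$, for all small $r$. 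Dividing by $\mathcal{L}^k(B_r(x_0))$, letting $r\downarrow0$, and using the continuity of $\tilde\xi$ together with Lebesgue differentiation gives $\frac{d\pi}{d\mathcal{L}^k}(x_0)\ge C_2\,\tilde\xi(x_0)^2|\nabla u(x_0)|_k^2$ for a.e.\ $x_0\in D$; integrating, $L\ge C_2\int_D|\nabla u|_k^2\,\tilde\xi^2\,dx=2C_2\tilde E^{D,\tilde\xi}(u)$, and finiteness of $L$ with $\tilde\xi$ bounded below forces $u\in H^1(D)$. Combined with the $\limsup$ inequality this establishes the $\Gamma$-convergence.

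The main obstacle is exactly this last step: one must simultaneously exploit the weak-$*$ convergence of the localizing measures $\pi_\epsilon$, the unweighted Bourgain--Brezis--Mironescu--Ponce $\liminf$ on each small ball, and Lebesgue differentiation, while verifying that freezing $\tilde\xi$ to $\inf_{B_r(x_0)}\tilde\xi$ rather than to the value $\tilde\xi(x_0)$ is harmless as $r\downarrow0$. A subsidiary technical point is the bookkeeping of the boundary layer in the $\limsup$ computation, which is controlled by the exponential decay of $\eta$.
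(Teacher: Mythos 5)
The paper does not give a proof of Theorem~\ref{thm:nonlocal_local_euclidean}: it is stated as a known Euclidean counterpart from which Theorem~\ref{thm:nonlocal_local_dirichlet} is deduced, so there is no in-text argument to compare against. Judged on its own merits, your proposal is a correct and reasonable derivation. The reduction of the unweighted case to Bourgain--Brezis--Mironescu/Ponce is legitimate: the family $\rho_\epsilon(z)=\epsilon^{-(k+2)}\eta(|z|_k/\epsilon)|z|_k^2$ indeed has $\epsilon$-independent mass $kC_2$, concentrates at the origin by the exponential decay of $\eta$, and rewrites $\tilde G^{D',1}_\epsilon$ in the canonical BBM form $\int\int\rho_\epsilon(x-y)|u(x)-u(y)|^2|x-y|_k^{-2}$; since $K_{2,k}=1/k$ the limiting energy is $kC_2\cdot\frac{1}{k}\|\nabla u\|_{L^2}^2=C_2\|\nabla u\|_{L^2}^2=2C_2\tilde E^{D,1}(u)$, matching the normalization in the statement. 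The reduction to the unweighted compactness via $\tilde\xi\ge c_0>0$, the Taylor/boundary-layer argument for the $\limsup$ on $C^1(\overline D)$ followed by extension, mollification, and the $(1+\tau)/(1+\tau^{-1})$ splitting, and the $\Gamma$-$\liminf$ by the measure-theoretic blow-up on balls $B_r(x_0)\Subset D$ (freezing $\tilde\xi$ from below, applying Ponce's $\Gamma$-$\liminf$ on the ball, then dividing by $|B_r|$ and using Lebesgue differentiation together with the continuity of $\tilde\xi$) are all standard moves that fit together correctly. The one subtlety you flag yourself --- why it is harmless to freeze $\tilde\xi$ at $\inf_{B_r(x_0)}\tilde\xi$ instead of $\tilde\xi(x_0)$ --- is resolved exactly as you say: $\inf_{B_r(x_0)}\tilde\xi\to\tilde\xi(x_0)$ as $r\downarrow0$ because $\tilde\xi$ is continuous, so the discrepancy vanishes in the limit. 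The passage from $u\in H^1_{\mathrm{loc}}(D)$ to $u\in H^1(D)$ under $L<\infty$ is also fine: $\int_D\tilde\xi^2|\nabla u|^2\le L/C_2<\infty$ and $\tilde\xi\ge c_0$ give $\nabla u\in L^2(D)$, while $u\in L^2(D)$ is the hypothesis. In short, the proposal supplies, via the BBM/Ponce machinery plus a blow-up to handle the variable weight, a valid proof of a result the paper invokes without proof.
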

With this result, we can prove Theorem~\ref{thm:nonlocal_local_dirichlet}. 
\begin{proof}[Proof of Theorem~\ref{thm:nonlocal_local_dirichlet}]
$\Gamma$-$\liminf$. Let $u_n \to u$ in $L^2(M)$. We want to prove that for any sequence $\epsilon_n \downarrow 0$ we have

\begin{equation}\label{eq:claim_liminf}
\liminf_{n \to +\infty} G_{\epsilon_n}(u_n) \ge 2C_2E(u).
\end{equation}
To this aim, we may assume as before that the left hand side of~\eqref{eq:claim_liminf} is finite. For any given $R > 0$ the family

\begin{equation}
\mathcal{F}_R := \left\{ \overline{B_{r}(x)} \subset M:\ r \le R,\ B_{r}(x) \subset \subset \Psi(U),\ \Psi\ 1-\text{Lipschitz chart}\right\}\nonumber
\end{equation}
is a Vitali covering. Since the manifold is compact, or more precisely by the doubling property~\eqref{eq:doubling}, we can select countably many disjoint balls $B_i$  in the above family, such that

\begin{equation}\label{eq:covering_vol}
\volm\left(M \setminus \bigcup_{i=1}^\infty \overline{B_i}\right) = 0.
\end{equation}
By construction, each ball $B_i$ is contained in $\Psi_i(U_i)$ for some $1$-Lipschitz local parametrization $\Psi_i$. Here $1$-Lipschitz is understood between Euclidean spaces, i.e.
\begin{equation}\label{eq:1_lip}
|\Psi_i(y_1) - \Psi_i(y_2)|_d \le |y_1 - y_2|_k,\ y_1, y_2 \in U_i. 
\end{equation}
In particular we have that since $\eta$ is non-increasing
\begin{equation}\label{eq:low_bd_eta}
\eta\left( \frac{|\Psi_i(y_1) - \Psi_i(y_2)|_d}{\epsilon_n} \right) \ge \eta\left( \frac{|y_1 - y_2|_k}{\epsilon_n} \right).
\end{equation}
By~\eqref{eq:low_bd_eta} and since the balls are disjoint we obtain
\begin{align*}
&G_{\epsilon_n}(u_n) 
\\ &\ge \sum_{i \in \mathbf{N}} \frac{1}{\epsilon^2_n} \int_{U_i \times U_i}\frac{1}{\epsilon_n^k}\eta\left( \frac{|x-y|_k}{\epsilon}\right)(u_n\circ \Psi_i(x) - u_n \circ \Psi(y))^2 \tilde{\xi}_i(x) \tilde{\xi}_i(y)dx dy\nonumber
\\ &= \sum_{i \in \mathbf{N}} \tilde{G}^{U_i, \tilde{\xi}_i}_{\epsilon_n}(u_n \circ \Psi_i),\nonumber
\end{align*}
where $\tilde{\xi}_i(y) = \xi(\Psi_i(y))\sqrt{\operatorname{det}(g)}$. Now an application of Fatou's Lemma, Theorem~\ref{thm:nonlocal_local_euclidean} and~\eqref{eq:covering_vol} give the $\liminf$ inequality.

$\Gamma$-$\limsup$. By a diagonal argument, we may reduce to proving the $\Gamma$-$\limsup$ inequality in the case when $u \in H^1(M) \cap C^{\infty}(M)$. For such a function and any sequence $\epsilon_n \downarrow 0$ we claim that

\begin{equation}
\limsup_{n \to +\infty} G_{\epsilon_n}(u) \le 2C_2E(u).\nonumber
\end{equation}

We will in a second moment construct a suitable covering $\{W_1, . . . , W_N\}$ which satisfies

\begin{equation}
M \subset \bigcup_{i=1}^N W_i,\nonumber
\end{equation}
where $N \in \mathbf{N}$ and $W_i = \Psi_i(U_i)$ with $\Psi_i$ local parametrizations defined on a bounded domain $U_i \subset \mathbf{R}^k$ with Lipschitz boundary such that
\begin{equation}\label{eq:lip_low}
|\Psi_i(y_1) - \Psi_i(y_2)|_d \ge |y_1 - y_2|_k\ \text{for}\ y_1, y_2 \in U_i.
\end{equation}
Let $\delta$ be the Lebesgue number of the given covering. Define the set 
\begin{equation}
F_{\delta} := \left\{ (x, y) \in M \times M:\ d(x, y) \ge \delta \right\}.\nonumber
\end{equation}
It is clear that $F_{\delta}$ is compact, thus by continuity we have that

\begin{equation}
|x-y|_d \ge c_\delta > 0,\ x, y \in F_{\delta}.\nonumber
\end{equation}
In particular by the exponential decay of $\eta$ we get that there exists two positive constants $c_1, c_2$ such that

\begin{equation}\label{eq:eta_f_delta}
\eta\left( \frac{|x-y|_d}{\epsilon}\right) \le c_1 \operatorname{exp}\left(\frac{-c_2c_\delta}{\epsilon} \right),\ x, y \in F_\delta.
\end{equation}
Now observe that 

\begin{align*}
G_{\epsilon_n}(u) = &\frac{1}{\epsilon_n^{2+k}}\int_{F_{\delta}}\eta\left( \frac{|x-y|_d}{\epsilon_n} \right) |u(x) - u(y)|^2\xi(x) \xi(y)d\volm(x)d\volm(y)
\\ &+\frac{1}{\epsilon_n^{2+k}}\int_{M \times M \setminus F_{\delta}}\eta\left( \frac{|x-y|_d}{\epsilon_n} \right) |u(x) - u(y)|^2\xi(x) \xi(y)d\volm(x)d\volm(y).
\end{align*}
Recalling~\eqref{eq:eta_f_delta} we may observe that the first term on the right hand side converges to zero as $n \to +\infty$. We claim that the $\limsup$ of the second right hand side term is bounded above by

\begin{equation}
\sum_{i=1}^N \int_{W_i} |\nabla u|^2 \xi d\volm.\nonumber
\end{equation}
To show this, observe that if $(x,y) \in M \times M \setminus F_{\delta}$ then by definition there exists $1\le i \le N$ such that $(x, y) \in W_{i} \times W_i$, in particular

\begin{align*}
&\frac{1}{\epsilon_n^{2+k}}\int_{M \times M \setminus F_{\delta}}\eta\left( \frac{|x-y|_d}{\epsilon_n} \right) |u(x) - u(y)|^2\xi(x) \xi(y)d\volm(x)d\volm(y)
\\ &\le \sum_{i=1}^N \frac{1}{\epsilon_n^{2+k}}\int_{W_i \times W_i}\eta\left( \frac{|x-y|_d}{\epsilon_n} \right) |u(x) - u(y)|^2\xi(x) \xi(y)d\volm(x)d\volm(y)
\\ &=\sum_{i=1}^N \tilde{G}_{\epsilon_n}^{U_i, \tilde{\xi}_i}(u \circ \Psi_i),
\end{align*}
where $\tilde{\xi}_i(y) = \xi(\Psi_i(y))\sqrt{\operatorname{det}(g)}$. Recalling Theorem~\ref{thm:nonlocal_local_euclidean}, if we let $n \to +\infty$ we obtain

\begin{equation}
\limsup_{n \to +\infty} G_{\epsilon_n}(u) \le C_2\sum_{i=1}^N \int_{W_i}|\nabla u|^2\xi d\volm.\nonumber
\end{equation}

We now claim that given any $\alpha > 0$ we can find $N \in \mathbf{N}$ and a covering $W_1, . . . , W_N$ as before such that 

\begin{equation}\label{eq:approx_cov_cl}
\sum_{i=1}^N \int_{W_i}|\nabla u|^2\xi d\volm - \int_M |\nabla u|^2 \xi d\volm < \alpha.
\end{equation}
This can be done as follows. Given any point $x \in M$ we can find a smooth function $\gamma: \mathbf{R}^k \to \mathbf{R}^{d-k}$ and a number $R>0$ such that, upon translating and rotating the axes, the map 

\begin{equation}
\Psi(y) = (y, \gamma(y)),\ y \in Q(0,R) := (0, R)^k\nonumber
\end{equation}
is a local parametrization around $x$. Clearly we have that~\eqref{eq:lip_low} is true. Define $V_x := \Psi(Q(0, \frac{R}{2}))$. Since the manifold is compact, we can find $N \in \mathbf{N}$ and points $x_1, . . . , x_N$ such that the sets $V_i := V_{x_i}$, $i=1, . . . , N$, cover $M$. Now define $\tilde{V}_1 = V_1$, $\tilde{V}_{i+1} = V_{i+1} \setminus \cup_{j=1}^i V_i$. Then $\{ \tilde{V}_i\}$ is a partition of $M$. Define $A_i := \Psi_i^{-1}(V_i)$. Then the sets $A_i \subset Q(0, \frac{R_i}{2})$ have Lipschitz boundary. Given $\theta > 0$ sufficiently small define, for any $1 \le i \le N$

\begin{align*}
&A_i^{\theta} := \left\{ y \in Q(0, R_i):\ d(y, A_i) < \theta \right\},
\\ & W_i := \Psi_i(A_i^\theta).
\end{align*}
Clearly, $\{W_i, . . ., W_N\}$ is an open covering satisfying~\eqref{eq:lip_low}. We now check that it satisfies~\eqref{eq:approx_cov_cl} provided $\theta$ is small enough. Observe that there exists a constant $C>0$ such that for any $1 \le i \le N$

\begin{align*}
\volm(W_i \setminus \tilde{V_i}) &\le C\mathcal{L}^k\left( A_i^{\theta}\setminus A_i \right)
\\ & \le C \mathcal{L}^k\left( \left\{ y \in Q(0, R_i):\ |y - \partial A_i|_k \le \theta \right\} \right).
\end{align*}
Recall that, since $\partial A_i$ is $(k-1)$-rectifiable, we have

\begin{equation}
\mathcal{H}^{k-1}(\partial A_i) = \lim_{\theta \downarrow 0} \frac{\mathcal{L}^k\left( \left\{ y \in Q(0, R_i):\ |y - \partial A_i|_k \le \theta \right\} \right)}{\theta}.\nonumber
\end{equation}
The right-hand side is the Minkowski content, cf.~\cite[Theorem 3.2.39]{Federer1969}. In particular for a given $\tilde{\alpha} > 0$, we can choose $\theta$ so small that 

\begin{equation}
\volm(W_i \setminus \tilde{V_i})  \le C\tilde{\alpha}.\nonumber
\end{equation}
Now observe that since $\tilde{V}_i \subset W_i$

\begin{align*}
&\sum_{i=1}^N \int_{W_i} |\nabla u|^2\xi d\volm - \int_M |\nabla u|^2 \xi d\volm
\\ &=\sum_{i=1}^N \int_{W_i} |\nabla u|^2\xi d\volm - \int_{\tilde{V}_i} |\nabla u|^2 \xi d\volm
\\ &\le \tilde{C}N\tilde{\alpha}.
\end{align*}
Choosing $\tilde{\alpha} = \frac{\alpha}{\tilde{C}N}$ we get~\eqref{eq:approx_cov_cl}. In particular

\begin{equation}
\limsup_{n \to +\infty} G_{\epsilon_n}(u) \le 2C_2E(u) + 2C_2\alpha,\nonumber
\end{equation}
and letting $\alpha \downarrow 0$ we get the $\limsup$ inequality.

The compactness property follows easily from Theorem~\ref{thm:nonlocal_local_euclidean}.
\end{proof}

\subsection*{$\Gamma$-convergence of the localized thresholding energies}\label{subs:gamma_loc}

Here we sketch the proof of Theorem~\ref{thm:gammaconvlocalized}. The upper bound in the $\Gamma$-convergence is obtained by using Lemma 3.6 in~\cite{Laux2016}. For the lower bound, one just needs the following approximate monotonicity, which was proved by Otto and one of the authors in the first version of the preprint preceeding~\cite{Laux2016}, but did not appear in the published version. 

\begin{theorem}\label{thm:monotonicitylocalized}
Let $\sigma \in \mathbf{R}^{P \times P}$ be a symmetric matrix such that $\sigma_{ij}$ satisfy the triangle inequality. Let $\beta \in C^{\infty}_c(B_1)$, where $B_1 \subset \mathbf{R}^k$ is the unit ball. For $t>0$ define $E_{t}^{B_1}(\cdot, \beta)$ as in~\eqref{eq:localized_th_en}. Let $k_t(z) = \frac{1}{\sqrt{t}^k}k_1(\frac{z}{\sqrt{t}})$, with $k_1(z) = |z|G_1(z)$. Then, defining for $u: B_1 \to [0,1]^P$ with $\sum_m u^m = 1$,

\begin{equation*}
\tilde{E}_t(u) = \frac{1}{\sqrt{t}} \sum_{i,j} \sigma_{ij} \int u^i k_t * u^j dx,
\end{equation*}
we have that for all such $u$ and all $0 < h \le h_0$

\begin{align}\label{eq:monot_loc_stat}
E_{h_0}^{B_1}(u, \beta) \le \left( \frac{\sqrt{h_0} + \sqrt{h}}{\sqrt{h_0}} \right)^{k+1} E^{B_1}_{h}(u, \beta) + C\Vert D\beta \Vert_{L^{\infty}}\tilde{E}_h(u)\sqrt{h_0}.
\end{align}
Here $C$ is a constant that does not depend on $h$ nor on $h_0$.
\end{theorem}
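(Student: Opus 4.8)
The plan is to follow Esedo\={g}lu and Otto's argument for the monotonicity of the unlocalized Euclidean thresholding energy (Lemma~A.2 of \cite{Esedoglu2015}), inserting the cut-off $\beta$ and keeping track of the commutator errors it produces. Write $I(x)$ for the index of the phase containing $x$ and set $\sigma_{I(x)I(y)}:=\sum_{m,q}\sigma_{mq}u^m(x)u^q(y)$, so that
\begin{equation*}
E_t^{B_1}(u,\beta)=\frac1{\sqrt t}\int_{B_1\times B_1}\beta(x)\,\sigma_{I(x)I(y)}\,G_t(x-y)\,dx\,dy,
\end{equation*}
and recall the identities $k_t(z)=\tfrac{|z|}{\sqrt t}G_t(z)=2\sqrt t\,|\nabla G_t(z)|$ and $\int|v|\,G_t(v)\,dv=\sqrt t\int k_t(v)\,dv$, which will be what converts displacements of $\beta$ into the weighted energy $\tilde E$.

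\textbf{Step 1: the soft semigroup estimate.} For $0<h\le h_0$ I would use $G_{h_0}(x-y)=\int G_{h_0-h}(x-w)G_h(w-y)\,dw$, insert $1=\sum_r u^r(w)$ at the intermediate point, and apply the triangle inequality in the form $\sigma_{I(x)I(y)}\le\sigma_{I(x)I(w)}+\sigma_{I(w)I(y)}$ (modulo boundary terms near $\partial B_1$, which are harmless since $\beta$ is compactly supported in $B_1$). In the piece carrying $\sigma_{I(x)I(w)}$ the variable $y$ decouples, $\int G_h(w-y)\,dy=1$, and one is left with $\tfrac{\sqrt{h_0-h}}{\sqrt{h_0}}E_{h_0-h}^{B_1}(u,\beta)$; in the piece carrying $\sigma_{I(w)I(y)}$ the variable $x$ decouples into $(\beta\ast G_{h_0-h})(w)=\beta(w)+O\!\big(\|D\beta\|_{L^\infty}\int|v|G_{h_0-h}(v)\,dv\big)$, leaving $\tfrac{\sqrt h}{\sqrt{h_0}}E_h^{B_1}(u,\beta)$ plus an error of size $C\|D\beta\|_{L^\infty}\sqrt{h_0}\,\tilde E_h(u)$ (using the identity above and a comparison of the $k$-weighted kernels at the two times). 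Thus
\begin{equation*}
E_{h_0}^{B_1}(u,\beta)\le\tfrac{\sqrt{h_0-h}}{\sqrt{h_0}}E_{h_0-h}^{B_1}(u,\beta)+\tfrac{\sqrt h}{\sqrt{h_0}}E_h^{B_1}(u,\beta)+C\|D\beta\|_{L^\infty}\sqrt{h_0}\,\tilde E_h(u).
\end{equation*}
With $\beta\equiv1$ this says precisely that $t\mapsto\sqrt t\,E_t^{\mathbf{R}^k}(u)$ is subadditive; but subadditivity alone only yields the lossy bound with prefactor $\sqrt{h_0}/\sqrt h$, so more is needed.

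\textbf{Step 2: the sharp (second-order) input.} To replace the lossy prefactor by $\big(\tfrac{\sqrt{h_0}+\sqrt h}{\sqrt{h_0}}\big)^{k+1}$ one has to exploit the sharp monotonicity of the unlocalized energy, i.e.\ the Esedo\={g}lu--Otto analysis of the second variation of $t\mapsto\sqrt t\,E_t^{\mathbf{R}^k}(u)$ near $t=0$, in which the triangle inequality on $\sigma$ is exactly what forces the curvature/triple-junction correction to carry the dissipative sign in the multiphase setting. I would reproduce this computation with $\beta$ present: the commutators $[\beta,\,G_t\ast\,]$ generate terms bounded, via $|\nabla G_t|\sim k_t$, by $\|D\beta\|_{L^\infty}$ times a $k$-weighted thresholding energy, which assemble into $C\|D\beta\|_{L^\infty}\sqrt{h_0}\,\tilde E_h(u)$, while the rescaling relating $G_{h_0}$ to $G_h$ (the $k$-dimensional spatial dilation together with the $\tfrac1{\sqrt t}$ normalization of the energies) produces the power $k+1$. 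The boundary/residual terms that appear when the step tends to $0$ are controlled by the consistency identity $\lim_{t\downarrow0}E_t^{B_1}(u,\beta)=E(u,\beta)$ for $u\in BV(B_1,\{0,1\}^P)$, which is the same localized computation as in the proof of Theorem~\ref{thm:nonlocloc}; for general $[0,1]^P$-valued $u$ one additionally carries the bulk term $\tfrac1{\sqrt t}\sum_{m,q}\sigma_{mq}\int\beta u^m u^q$.

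\textbf{Main obstacle.} I expect the hard part to be twofold. First, reproducing the Esedo\={g}lu--Otto mechanism by which the triangle inequality makes the second-order term of $\sqrt t\,E_t$ dissipative uniformly in the configuration — this is already the crux of the unlocalized multiphase monotonicity, and it has to be carried out in the presence of the weight $\beta$ without the commutator errors spoiling the sign. Second, the quantitative bookkeeping: pinning down the exponent $k+1$ in the prefactor, the exact power $\sqrt{h_0}$ in front of $\tilde E_h(u)$, and the control of the bulk term for $u$ that are not $\{0,1\}^P$-valued, all of which require the estimates of Step~1 and Step~2 to be combined without loss.
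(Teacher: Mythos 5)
Your overall strategy (Esedo\={g}lu--Otto monotonicity with a cut-off $\beta$ inserted and commutator errors tracked, with the triangle inequality on $\sigma$ as the key structural input) is the right one, but there is a genuine gap between what you set up and what the paper actually needs. The two ingredients in the paper's proof are different from the two you propose, and the difference matters.

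\emph{The decomposition.} You decompose via the Chapman--Kolmogorov identity $G_{h_0}(x-y)=\int G_{h_0-h}(x-w)G_h(w-y)\,dw$, which after the triangle inequality gives subadditivity of $h\mapsto\sqrt h\,E_h^{B_1}(u,\beta)$ in $h$. As you yourself note, this only yields the lossy prefactor $\sqrt{h_0}/\sqrt h$. The paper instead splits the \emph{displacement vector along itself}: writing $\sqrt{h_0}=\sqrt{h_1}+\sqrt h$, one has $u(x-\sqrt{h_0}z)=u(x-\sqrt{h_1}z-\sqrt h\,z)$ under a single Gaussian weight $G_1(z)$, and applies the pointwise inequality $(1-u)u''\le(1-u')u''+(1-u)u'$ with $u=u(x)$, $u'=u(x-\sqrt{h_1}z)$, $u''=u(x-\sqrt{h_0}z)$ (the scalar form of the triangle inequality you write). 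This is subadditivity of $h\mapsto\sqrt h\,E_h^{B_1}(u,\beta)$ in $\sqrt h$, not in $h$, and iterating it over $\sqrt{h_0}=N\sqrt h$, $\sqrt{h_1}=(N-1)\sqrt h$ yields $E_{N^2h}^{B_1}(u,\beta)\le E_h^{B_1}(u,\beta)+C(N-1)\sqrt h\,\|D\beta\|_\infty\tilde E_h(u)$, with \emph{no} prefactor in front of $E_h^{B_1}$. The same-direction split also produces the correct kernel in the error: the Lipschitz bound $|\beta(x)-\beta(x-\sqrt{h_1}z)|\le\|D\beta\|_\infty\sqrt{h_1}|z|$ inserts a factor $|z|$ against $G_1(z)$, which is exactly $k_1$; in your semigroup split the $\beta$-displacement variable $v$ and the $u$-displacement variable $w-y$ are independent, so the $|v|$ you pick up from the commutator cannot be converted into a $k_h$-weighted energy, and you get the unlocalized $G_h$-energy $E_h^{\mathbf{R}^k}(u)$ instead of $\tilde E_h(u)$; these are not pointwise comparable.

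\emph{The prefactor $(\,(\sqrt{h_0}+\sqrt h)/\sqrt{h_0})^{k+1}$.} You propose to recover it by a ``second variation'' analysis of $t\mapsto\sqrt t\,E_t$, combined with the consistency identity $\lim_{t\downarrow0}E_t^{B_1}(u,\beta)=E(u,\beta)$. Neither is used in the paper's proof, nor is either needed: the mechanism is elementary. Rescaling $z\mapsto z/\sqrt h$ shows $\sqrt h^{\,k+1}E_h^{B_1}(u,\beta)=\int_{B_1}\int\beta(x)(1-u)(x)G_1\bigl((x-y)/\sqrt h\bigr)u(y)\,dy\,dx$, which is \emph{monotone nondecreasing in $\sqrt h$} simply because $G_1$ is radially nonincreasing ($\beta\ge0$, $u,1-u\ge0$); differentiating in $\sqrt h$ gives the sign immediately. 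This first-derivative monotonicity is what one interpolates with the iterated one-step inequality, by choosing $N$ with $(N-1)\sqrt h\le\sqrt{h_0}<N\sqrt h$ and noting $N\sqrt h/\sqrt{h_0}\le(\sqrt{h_0}+\sqrt h)/\sqrt{h_0}$. So the power $k+1$ does come from the rescaling as you guessed, but it enters via this pointwise monotonicity of the rescaled kernel, not via any curvature or triple-junction second-order analysis. If you replace your Step~1 by the same-direction split and your Step~2 by the derivative computation for $\sqrt h^{\,k+1}E_h^{B_1}(u,\beta)$, the rest of your bookkeeping goes through.
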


The original proof was based on the ideas used for proving the monotonicity of the non-localized thresholding energies in~\cite{Esedoglu2015}. For the convenience of the reader, we include a proof for the simpler two phase setting. In that case one has to prove~\eqref{eq:monot_loc_stat} with

\begin{equation}
\tilde{E}^{B_1}_t(u, \beta) = \frac{1}{\sqrt{t}}\int_{B_1} \beta (1-u) G_t * u dx,\ u:B_1 \to [0,1]
\end{equation}
and
\begin{equation}
\tilde{E}_t(u) = \frac{1}{\sqrt{t}} \int_{B_1} k_t *(1-u) u dx,\ u :B_1 \to [0,1].
\end{equation}

\begin{proof}[Proof of Theorem~\ref{thm:monotonicitylocalized} in the two phase setting]
Clearly, statement~\eqref{eq:monot_loc_stat} is a consequence of the following two items.
\begin{align}
\sqrt{h_1}^{k+1} E_{h_1}^{B_1}(u, \beta) &\le \sqrt{h_2}^{k+1} E_{h_2}^{B_1}(u, \beta)\ &\forall 0 < h_1 \le h_2,\label{eq:logarithmic_mon}
\\ E_{N^2h}^{B_1}(u, \beta) &\le E_h^{B_1}(u, \beta) + C(N-1)\sqrt{h}\Vert D\beta \Vert_{\infty} \tilde{E}_h(u)\ &\forall N \in \mathbf{N},\ \forall h > 0.\label{eq:disc_mon}
\end{align}
To see this, let $0 < h \le h_0$. Let $N \in \mathbf{N}$ be such that 
\begin{equation*}
(N-1)\sqrt{h} \le \sqrt{h_0} < N\sqrt{h}.
\end{equation*}
Then we have
\begin{align*}
E_{h_0}^{B_1}(u, \beta) &\stackrel{{\eqref{eq:logarithmic_mon}}}{\leq} \left( \frac{N\sqrt{h}}{\sqrt{h_0}} \right)^{k+1}E_{N^2h}^{B_1}(u, \beta)
\\ &\stackrel{\eqref{eq:disc_mon}}{\leq}\left( \frac{N\sqrt{h}}{\sqrt{h_0}} \right)^{k+1}\left( E_h^{B_1}(u,\beta) + C(N-1)\sqrt{h}\Vert D\beta \Vert_{\infty} \tilde{E}_h(u)\right)
\\ &\stackrel{\phantom{\eqref{}}}{\leq}\left( \frac{\sqrt{h} + \sqrt{h_0}}{\sqrt{h_0}} \right)^{k+1} E_h^{B_1}(u,\beta) + C\sqrt{h_0}\Vert D\beta \Vert_{\infty} \tilde{E}_h(u).
\end{align*}
We are thus left with proving~\eqref{eq:logarithmic_mon} and~\eqref{eq:disc_mon}.
\\
\textit{Item~\eqref{eq:logarithmic_mon}}. This follows by showing that

\begin{equation*}
\frac{d}{d\sqrt{h}}\left( \sqrt{h}^{k+1}E_h^{B_1}(u, \beta)\right) \ge 0,
\end{equation*}
which follows by differentiation. Indeed, 
\begin{align*}
\frac{d}{d\sqrt{h}}\left( \sqrt{h}^{k+1}E_h^{B_1}(u, \beta)\right)  &= \frac{d}{d\sqrt{h}} \int_{B_1}\beta (1-u) G_1\left( \frac{z}{\sqrt{h}}\right) u dx
\\ &= -\frac{1}{\sqrt{h}} \int_{B_1}\beta (1-u) \nabla G_1\left( \frac{z}{\sqrt{h}} \right) \cdot \frac{z}{\sqrt{h}} u dx \ge 0.
\end{align*}
\textit{Item~\eqref{eq:disc_mon}}. We let $0 < h_0$ be such that $\sqrt{h_0} = \sqrt{h_1} + \sqrt{h}$. Then we observe that

\begin{align*}
\phantom{\sqrt{h_0}E_{h_0}^{B_1}(u, \beta)}
&\begin{aligned}\mathllap{\sqrt{h_0}E_{h_0}^{B_1}(u, \beta)} &= \int_{B_1} \int_{\mathbf{R}^k} \beta(x) (1-u)(x) G_1(z) u(x-\sqrt{h_1}z - \sqrt{h}z) dz dx
\end{aligned}
\\ & \begin{aligned}
\mathllap{} \le &\int_{B_1} \int_{\mathbf{R}^k} \beta(x) (1-u)(x-\sqrt{h_1}z) G_1(z) u(x-\sqrt{h_1}z - \sqrt{h}z) dz dx
\\ &+\int_{B_1} \int_{\mathbf{R}^k} \beta(x) (1-u)(x) G_1(z) u(x-\sqrt{h_1}z) dz dx,
\end{aligned}
\end{align*}
where we used the inequality

\begin{align*}
(1-u)u'' \le (1-u')u'' + (1-u)u',\ \forall u, u', u'' \in [0,1],
\end{align*}
applied to $u = u(x), u' = u(x-\sqrt{h_1}z), u'' = u(x-\sqrt{h}z - \sqrt{h_1}z)$. We record that the second term on the right hand side is equal to

\begin{align*}
\sqrt{h_1}E_{h_1}^{B_1}(u, \beta).
\end{align*}
The other term is estimated as follows. First we change variable in $x$, and then we estimate $|\beta(x) - \beta(x-\sqrt{h_1}z)| \le \Vert D\beta \Vert_{\infty} \sqrt{h_1} |z|$ to get
\begin{align*}
&\begin{aligned}
\int_{\mathbf{R}^k} \int_{B_1-\sqrt{h_1}z} \beta(x + \sqrt{h}z) (1-u)(x) G_1(z) u(x-\sqrt{h}z) dz dx
\end{aligned}
\\ &\begin{aligned}
\le  &\int_{\mathbf{R}^k} \int_{B_1-\sqrt{h_1}z} \beta(x) (1-u)(x) G_1(z) u(x-\sqrt{h}z) dx dz
\\ &+\sqrt{h_1}\Vert D\beta \Vert_{\infty} \int_{\mathbf{R}^k} \int_{B_1-\sqrt{h_1}z} (1-u)(x) |z|G_1(z) u(x-\sqrt{h}z) dz dx
\end{aligned}
\\ &\begin{aligned}
= &\int_{\mathbf{R}^k} \int_{B_1} \beta(x) (1-u)(x) G_1(z) u(x-\sqrt{h}z) dx dz
\\ &+\sqrt{h_1}\Vert D\beta \Vert_{\infty} \int_{\mathbf{R}^k} \int_{B_1-\sqrt{h_1}z} (1-u)(x) k_1(z) u(x-\sqrt{h}z) dz dx,
\end{aligned}
\end{align*}
where in the last equality we used the fact that $\beta$ is supported in $B_1$. Observe that

\begin{align*}
&\begin{aligned}
\int_{\mathbf{R}^k} \int_{B_1-\sqrt{h_1}z} (1-u)(x) k_1(z) u(x-\sqrt{h}z) dz dx
\end{aligned}
\\ &\begin{aligned}
\le\int_{\mathbf{R}^k} \int_{\mathbf{R}^k} (1-u)(x) k_1(z) u(x-\sqrt{h}z) dz dx
\end{aligned}
\\ &\begin{aligned}
= &\int_{\mathbf{R}^k}\int_{\mathbf{R}^k} (1-u)(x) k_{h}(z) u(x-z) dz dx
\end{aligned}
\\  &\begin{aligned}
=&\int_{\mathbf{R}^k}\int_{\mathbf{R}^k} (1-u)(x) k_{h}(z-x) u(z) dz dx
\end{aligned}
\\  &\begin{aligned}
=&\int_{\mathbf{R}^k}\int_{B_1} (1-u)(x) k_{h}(z-x) u(z) dz dx
\end{aligned}
\\  &\begin{aligned}
=&\int_{B_1}k_{h} * (1-u)(z) u(z) dz = \sqrt{h}\tilde{E}_{h}(u).
\end{aligned}
\end{align*}
Here we used that $u$ is supported in $B_1$. Putting things together we obtain that

\begin{align}\label{eq:mon_bef_induction}
\sqrt{h_0}E_{h_0}^{B_1}(u, \beta) \le &\sqrt{h_1}E_{h_1}^{B_1}(u, \beta) + \sqrt{h}E_h^{B_1}(u, \beta)
\\ &+\sqrt{h_1}\sqrt{h}\Vert D\beta \Vert_{\infty} \tilde{E}_{h}(u).\nonumber
\end{align}
If we now apply inductively~\eqref{eq:mon_bef_induction} with $h_1 = (N-1)^2h$ and $h_0 = N^2h$ one gets

\begin{align*}
N\sqrt{h} E_{N^2h}^{B_1}(u, \beta) &\le N\sqrt{h} E_h^{B_1}(u, \beta) + \sum_{i = 1}^{N-1} ih\Vert D\beta \Vert_{\infty} \tilde{E}_{h}(u)
\\ & =N\sqrt{h} E_h^{B_1}(u, \beta) + \frac{(N-1)N}{2}h\Vert D\beta \Vert_{\infty} \tilde{E}_{h}(u).
\end{align*}
Dividing by $N\sqrt{h}$ yields~\eqref{eq:disc_mon}.
\end{proof}

\section*{Data Availability}

The datasets generated during and/or analysed during the current study are available in the GitHub repository \url{https://github.com/jonalelmi/Data-th-en}.

\section*{Acknowledgements}

This project has received funding from the Deutsche Forschungsgemeinschaft (DFG, German Research Foundation) under Germany's Excellence Strategy -- EXC-2047/1 -- 390685813.

\bibliography{bib_first_draft}{}
\bibliographystyle{plain}

\end{document}